\documentclass[12pt,a4paper]{article}
\usepackage[utf8]{inputenc}
\usepackage{amsmath}
\usepackage{amsfonts}
\usepackage{amssymb}
\usepackage{graphicx}
\usepackage{mathrsfs}
\usepackage{eqnarray,amsthm}
\usepackage{lmodern}
\usepackage[T1]{fontenc}
\usepackage{fullpage}
\usepackage{enumitem}

\begin{document}

\title{\textbf{Classification of nonlinear boundary conditions for 1D nonconvex Hamilton-Jacobi equations}}

\author{Jessica Guerand\footnote{ D\'epartement de Mathématiques et applications, \'Ecole Normale Sup\'erieure (Paris), 
45 rue d’Ulm, 75005 Paris, France. \texttt{jessica.guerand@ens.fr}}}

\maketitle

\newtheorem{thm}{\bf Theorem}[section]
\newtheorem{prop}[thm]{Proposition}
\newtheorem{defin}[thm]{Definition}
\newtheorem{Lm}[thm]{Lemma}
\newtheorem{cor}[thm]{Corollary}
\newtheorem{theo}[thm]{Theorem}
\theoremstyle{remark}
\newtheorem{Rmk}[thm]{Remark}

\begin{center}
\textbf{Abstract}
\end{center}

We study Hamilton-Jacobi equations in $[0,+\infty)$ of evolution type with nonlinear boundary conditions of Neumann type in the case where the Hamiltonian is non necessarily convex with respect to the gradient variable. In this paper, we give two main results. First, we prove a classification of boundary condition result for a nonconvex, coercive Hamiltonian, in the spirit of the flux-limited formulation for quasi-convex Hamilton-Jacobi equations on networks recently introduced by Imbert and Monneau. 
 Second, we give a comparison principle for a nonconvex and noncoercive Hamiltonian where the boundary condition can have flat parts.
\bigbreak
\textbf{Mathematics Subject Classification:} 49L25, 35B51, 35F30, 35F21
\bigbreak
\textbf{Keywords:} Hamilton-Jacobi equations, nonconvex Hamiltonians, discontinuous Ha\-miltonians, viscosity solutions, flux-limited solutions, comparison principle.

\section{Introduction}

\subsection{Hamilton-Jacobi equation and flux-limited solutions}

This paper deals with Hamilton-Jacobi equations of the type $$\left\{ \begin{array}{lllll}
u_{t}+H(u_{x})= 0  & \mbox{ for } & t \in (0,T) & \mbox{ and }& x>0\\
u_{t}+F(u_{x})= 0 & \mbox{ for } & t \in (0,T) & \mbox{ and }& x=0,
\end{array}\right.$$ for $T>0$, associated with a nonconvex and noncoercive (only for one result) Hamiltonian in the gradient variable. Imbert and Monneau prove in \cite{im,im2}, two mains results, among others. First, they prove a comparison principle for quasi-convex Hamilton-Jacobi equations on networks. Second, they give a classification result, imposing a general junction condition reduce to imposing a junction condition of optimal control type (see also \cite{gue1}), here a flux-limited junction condition.  
 The purpose of this paper is to obtain the results of Imbert and Monneau for a nonconvex Hamiltonian on the half line $[0,+\infty)$. 

\vspace{0.5cm}

\textbf{Comparison with known results.} First we deal with known results about comparison principles. 
There exist many results for Hamilton-Jaco\-bi equations with boundary conditions of Neumann type. 
In \cite{PLL}, the author studied the case of linear Neumann boundary condition.
For first-order Hamilton-Jacobi equations, Barles and Lions prove a comparison principle result in \cite{barleslionsordre1} under a nondegeneracy condition on the boundary nonlinearity (see \eqref{decres} below). The second-order case was treated by Ishii and Barles in \cite{ishordre2,Barlesellip,barles1}.
More precisely, Barles proves in \cite{barles1} a comparison principle for fully non linear second order, degenerate, parabolic equations, in a smooth subset $\Omega$ of $\mathbb{R}^{N}$, i.e.,
$$u_{t}+H(x,u,Du,D^{2}u)=0 \mbox{ in } \Omega,$$
with a nonlinear Neumann boundary condition satisfying the same nondegeneracy as in \cite{barleslionsordre1},
$$u_{t}+F(x,u,Du)=0 \mbox{ in } \Omega.$$
In this paper, we restrict ourselves to the case where $H$ and $F$ only depends on the gradient variable.
In \cite{barles1,barleslionsordre1}, considering only the gradient variable dependence, the boundary condition satisfies 
\begin{equation}
\label{decres}
F(p-\lambda)-F(p)\geq C\lambda, \quad  \mbox{ for } \lambda>0.
\end{equation}
 In this paper we assume a more general boundary condition, here $F$ is non-increasing, possibly with flat parts, and satisfies  
$$\lim_{p\rightarrow -\infty}F(p)=+\infty \quad \mbox{ and } \lim_{p\rightarrow +\infty}F(p)=-\infty.$$
For example, the function $F(p)=-\mbox{argsh} (p)$ does not satisfy the first condition but satisfies the second one. 

In \cite{lionssoug}, the authors deal with nonconvex coercive Hamiltonians on junctions. They prove a comparison principle for this state constraint problem (here, we write it in the case where the Hamiltonians only depend on the gradient variable and the junction is reduced to one branch i.e., a half-line), 
\begin{equation}
\label{LionsSoug}
\begin{array}{lll}
u_{t}+H(u_{x}) & = 0 & \mbox{in } (0,T)\times (0,+\infty)\\
u_{t}+H(u_{x}) &\geq 0 & \mbox{in } (0,T)\times \{ 0 \}.
\end{array}
\end{equation}
This problem is an extension to the state constraint problem of Soner \cite{son} and Ishii and Koike \cite{ish}, where the authors study the case of a convex Hamiltonian. 
For $H$ quasi-convex, in \cite{im}, the authors prove that \eqref{LionsSoug} is equivalent to 
\begin{equation}
\label{IMSC}
\begin{array}{lll}
u_{t}+H(u_{x}) & = 0 & \mbox{in } (0,T)\times (0,+\infty)\\
u_{t}+H^{-}(u_{x}) &= 0 & \mbox{in } (0,T)\times \{ 0 \},
\end{array}
\end{equation}
where $H^{-}$ is the decreasing part of the Hamiltonian, see also \cite{gue1} for the multidimensional case. 
If we define for $H$ nonconvex, 
$$H^{-}(p)=\inf\limits_{q\leq p} H(q),$$
one can prove the equivalence between \eqref{LionsSoug} and \eqref{IMSC} using the same methods as in \cite{im,gue1} and results of this paper (see Appendix A). For a junction with many branches, one can get the same kind of equivalence of equations with the same tools.
%They also prove a comparison principle for this flux-limited problem
%\begin{equation}
%\label{IMSC2}
%\begin{array}{lll}
%u_{t}+H(u_{x}) & = 0 & \mbox{in } (0,T)\times (0,+\infty)\\
%u_{t}+H_{A}(u_{x}) &= 0 & \mbox{in } (0,T)\times \{ 0 \},
%\end{array}
%\end{equation}
%where $H_{A}(p)=\max(A,H^{-}(p))$. 
 In this paper, we get a comparison principle for \eqref{IMSC} and more generally, not only for $H^{-}$, but for any continuous, non-increasing, semi-coercive function. 
 
As far as classification of boundary conditions are concerned, in a pioneer work Andreianov and Sbihi \cite{andreiaSbi1,andreiaSbi2,andreiaSbi3} are able to describe effective boundary conditions for scalar conservation laws. Concerning the Hamilton-Jacobi framework, first results were obtained for quasi-convex Hamiltonians by Imbert and Monneau. They treat the problem on a junction with several branches in 1D \cite{im} and in the multi-dimensional case \cite{im2}.
Still in a quasi-convex framework, the authors in \cite{vinhimbert} prove a classification result of more general boundary conditions for degenerate parabolic equations. The nonconvex case has been out of reach so far.
In this paper, we get a classification result for a nonconvex Hamiltonian in 1D on the half-line. Monneau proves independently in \cite{regis} a classification result for a nonconvex Hamiltonian in the multi-dimensional case on a junction. 
  
After \cite{im,im2}, many papers deal with the flux-limited formulation and results associated to the reduction of the set of test functions. These problems show the relevence of considering a more general class of boundary conditions than the classical state constraint problem \cite{son,ish} (i.e. considering $F_{A}$ that is more general than $H^{-}$). Homogenisation results have been recently obtained in \cite{GIM,FS}.  Moreover, there have been numerical results for a quasi-convex Hamiltonian and a flux-limited function at the junction point. There is a convergence result for a flux-limited function at the junction point in \cite{CLM}.  In \cite{IK}, the authors find an error estimate of order $\Delta x^{\frac{1}{3}}$ of the same scheme as in \cite{CLM}, and prove a convergence result for a general junction function at the junction point. This error estimate has been improved in \cite{GK} to order $\Delta x^{\frac{1}{2}}$. There are also applications in optimal control, for example in \cite{AOT} where the authors study problem related to flux-limited functions. 
  
\vspace{0.5cm}

\textbf{Contributions of the paper.} In this article, as in \cite{im} for quasi-convex Hamiltonians, we prove first that boundary conditions can be also classified for a nonconvex coercive Hamiltonian by generalizing the definition of $A$-limited flux. 
Second, we prove first a comparison principle for a nonconvex and noncoercive Hamiltonian where the boundary condition can have flat parts. The main idea  of the proof is to replace the classical term of the doubling variable method $\frac{(t-s)^{2}}{2\delta}+\frac{(x-y)^{2}}{2\epsilon}$ by an appropriate function coupling time and space $\delta\varphi\left(\frac{t-s}{\delta},\frac{x-y}{\delta}\right)$ which prevents the classical supremum to be reached at the boundary.

\vspace{0.5cm}

\textbf{Comments and difficulties.}
For the classification result, the main difficulty was to find the good definition of flux-limited function $F_{A}$ for a nonconvex coercive Hamiltonian. In \cite{im}, for a quasi-convex Hamiltonian, Imbert and Monneau prove that boundary conditions can be classified with the flux-limited functions of the following form (see figure \ref{convexFA})
$$F_{A}(p)=\max(A,H^{-}(p)),$$ 
which are also BLN flux functions (see \cite{BLN}) defined as, for $p_{0}\in \mathbb{R}$, 
$$F_{p_{0}}(p)= \left\{\begin{array}{ll}
\sup\limits_{q\in [p,p_{0}]} H(q) & \mbox{if } p\leq p_{0}\\
\inf\limits_{q\in [p_{0},p]} H(q) & \mbox{if } p\geq p_{0}.
\end{array}\right.$$ The BLN flux functions can be defined for nonconvex Hamiltonians.
However, in the nonconvex case, BLN flux functions are not sufficient to classify boundary conditions. For example, for an Hamiltonian with two minima (see figure \ref{nonconvFA}), we need flux-limited functions with two flat parts $A_{1}$ and $A_{2}$ like in figure \ref{nonconvFA}, but this function is not a BLN flux function. However, it is locally a BLN function. In fact it is the ``effective'' boundary condition introduced in \cite{andreiaSbi1,andreiaSbi2,andreiaSbi3}. As we only have a comparison result for the half line case, we only give the proof of the classification result in the half line case. However, a different approach dealing with $N$ branches in the multi-dimensional case is developped in \cite{regis}.

 For the comparison principle, we tried to generalize the idea of Imbert and Monneau in \cite{im} of the ``vertex test function''. In their comparison principle, they replaced the classical term $\frac{(x-y)^{2}}{2\varepsilon}$ by a function $G$ called the ``vertex test function'' which satisfies (almost) the following condition 
$$H(y,-G_{y})\leq H(x,G_{x}),$$
which gives a contradiction combining the two viscosity inequalities.  But for nonconvex Hamiltonians even for a junction with only one branch, it is very difficult to find such a ``vertex test function''. However, we follow the idea of coupling time and space in the doubling variable method in \cite{FIM}. For example for the boundary condition $F(p)=H(0,p)=-p$, taking $$\frac{(t-s)^{2}}{2\delta}+\frac{(t-s)}{\delta}(x-y)+\frac{(x-y)^{2}}{2\delta},$$
instead of the classical term $$\frac{(t-s)^{2}}{2\delta}+\frac{(x-y)^{2}}{2\delta},$$
allows to get rid of the case $x=0$ or $y=0$ in the viscosity inequalities. In this paper, we give an example of such a function coupling time and space which solves the problem for all boundary conditions satisfying, $F$ is non-increasing and
$$\lim_{p\rightarrow -\infty}F(p)=+\infty \quad \mbox{ and } \lim_{p\rightarrow +\infty}F(p)=-\infty.$$
This proof is too difficult to be adapted for a junction with several branches, that is why, this paper is written only for a half-line domain. 

\begin{figure}
\begin{center}
  \includegraphics[width=5cm]{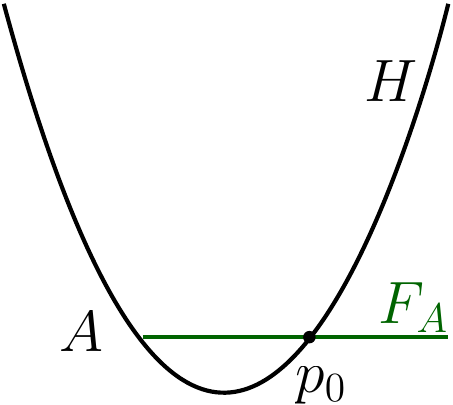}
  \caption{Illustration of the function $F_{A}$ in the convex case.}
  \label{convexFA}
  \end{center}
  \end{figure}
  
  \begin{figure}
\begin{center}
  \includegraphics[width=7.0cm]{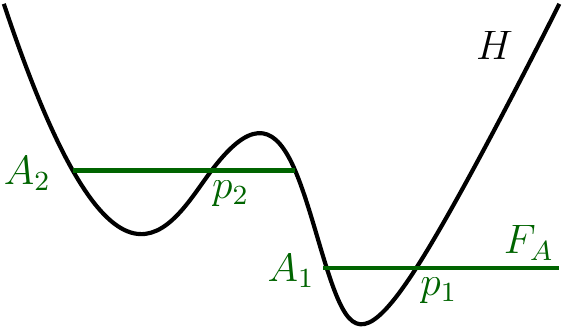}
  \caption{Illustration of a function $F_{A}$ in the nonconvex case.}
  \label{nonconvFA}
  \end{center}
  \end{figure}

\subsection{Main theorems}
 
 Let us consider the following Hamilton-Jacobi equation in $(0,T)\times [0,+\infty)$ 
 \begin{equation}
\label{eqHJ}
\left\{ \begin{array}{lllll}
u_{t}+H(u_{x})= 0  & \mbox{ for } & t \in (0,T) & \mbox{ and }& x>0\\
u_{t}+F(u_{x})= 0 & \mbox{ for } & t \in (0,T) & \mbox{ and }& x=0
\end{array}\right.
\end{equation}
subject to the initial condition

\begin{equation}
\label{IC}
u(0,x)=u_{0}(x) \quad \mbox{ for } \quad x\geq 0.
\end{equation}
We study the case of a continuous Hamiltonian $H:\mathbb{R}\rightarrow \mathbb{R}$ and a continuous non-increasing function $F:\mathbb{R}\rightarrow \mathbb{R},$ which satisfy other properties specified in the theorems.
In this paper, we don't prove any existence result, as the proof of  \cite[Theorem 2.14]{im} prove also the existence of a solution in our case, for a nonconvex and noncoercive Hamiltonian. 
Let us state our main theorem, the classification result, which is the extension of  \cite[Theorem 1.1]{im} to the case of a nonconvex Hamiltonian. 
  
  \begin{figure}
\begin{center}
  \includegraphics[width=10.0cm]{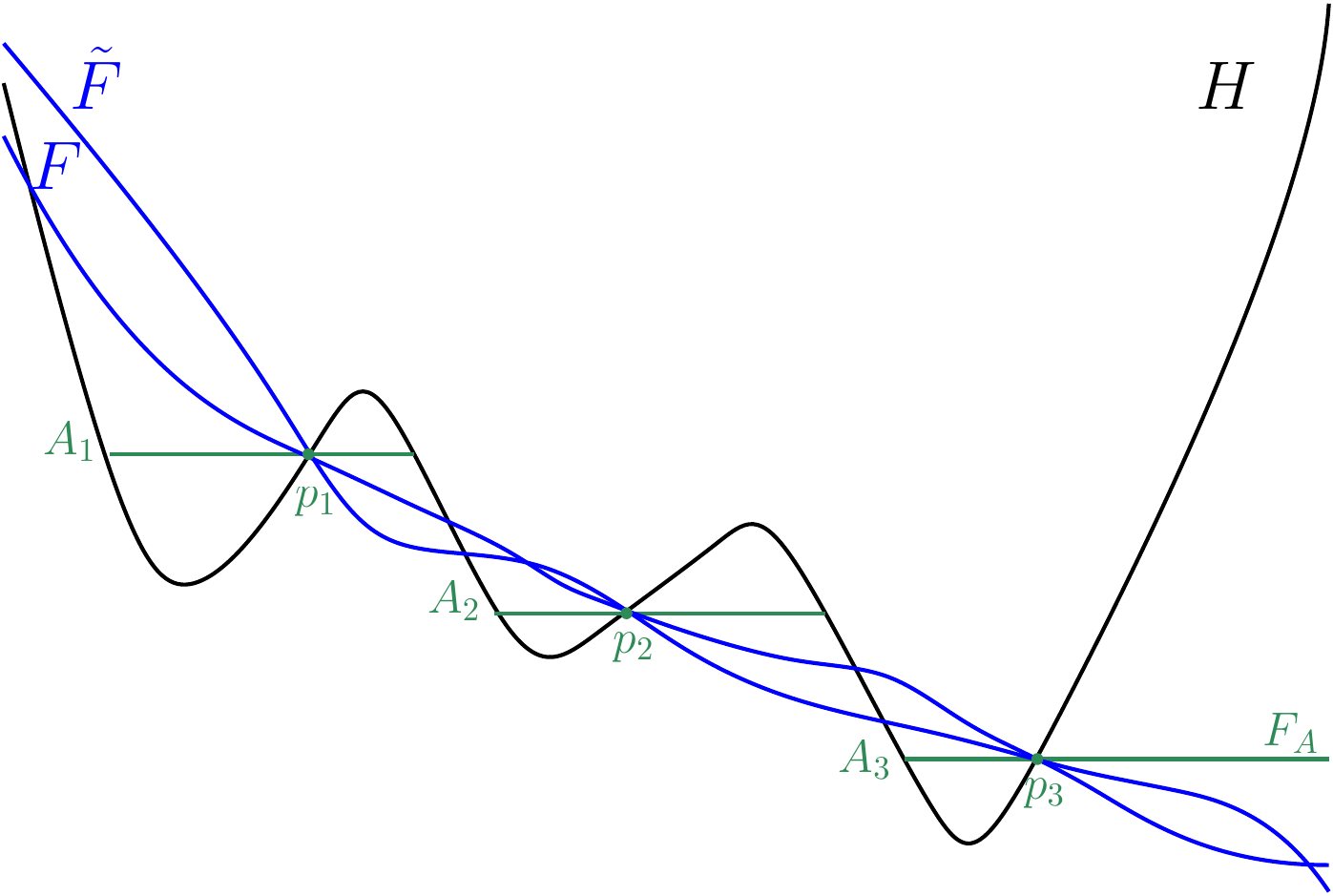}
  \caption{Illustration of a function $F_{A}$ associated to $F$ in Theorem \ref{thclassification}.}
  \label{thclass}
  \end{center}
  \end{figure}

  To understand the result, we comment it on an example, see Figure \ref{thclass}. The following theorem gives the equivalence between the relaxed equation of \eqref{eqHJ} for a general $F$ and the equation \eqref{eqHJ} for $F=F_{A}$, where $F_{A}$ is a non-increasing function which is ``almost'' the function $H$ where each non-decreasing part are replaced by the ``right constant''. In the particular case of Figure \ref{thclass}, the ``right constants'' are given by the intersection of $F$ and the non-decreasing parts of $H$. We deduce here that taking $\tilde{F}$ instead of $F$ gives the same solutions of the relaxed equation of \eqref{eqHJ}. The flux function $F_{A}$ and the set limiter $A_{F}$ are defined in part 3 of this paper. The definition of relaxed solutions and flux-limited solutions are given in part 2. 

\begin{theo}[Classification of general Neumann boundary conditions]
\label{thclassification}
Assume that the Hamiltonian $H:\mathbb{R}\rightarrow\mathbb{R}$ is continuous and coercive
\begin{equation}
\label{coerc}
 \lim\limits_{|p|\rightarrow+\infty}H(p)=+\infty,
 \end{equation}
 the function $F:\mathbb{R}\rightarrow\mathbb{R}$ is continuous, non-increasing and semi-coercive 
 \begin{equation}
\label{propF2}
\lim_{p\rightarrow -\infty}F(p)=+\infty.
\end{equation}
 Then there exists a unique set limiter $A_{F}$ (cf. Definition \ref{defFAF}) such that any relaxed solution of (\ref{eqHJ}) is in fact a flux-limited solution of (\ref{eqHJ}) with $F=F_{A_{F}}$.
\end{theo}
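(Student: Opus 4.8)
The plan is to adapt the strategy of Imbert and Monneau \cite{im} for quasi-convex Hamiltonians, the genuinely new ingredient being a combinatorial analysis of the graph of the nonconvex coercive $H$ that singles out the correct effective boundary function $F_{A_F}$. \textbf{Reduction of the set of test functions.} The first step is to show, as in Section 2, that a relaxed sub- or supersolution of \eqref{eqHJ} may be tested at a boundary point $(t_0,0)$ using only test functions that near $x=0$ are affine in $x$, of the form $\psi(t)+\bar p\,x$, and that by the coercivity \eqref{coerc} the relevant slopes $\bar p$ stay in a fixed compact set, so that one may further restrict to the finitely many ``critical'' slopes at which the one-sided limits of $H$ governing the boundary condition are attained. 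This converts the PDE boundary condition into a pointwise statement about the numbers $\psi_t(t_0)$, $H(\bar p)$ and $F(\bar p)$, and it is the mechanism by which the interior equation on $\{x>0\}$ transmits its constraints down to $x=0$.

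\textbf{Identification of the effective boundary function.} Next I would use the previous step to compute exactly what the relaxed condition with $F$ imposes at $x=0$ and match it to the flux-limited condition for the function $F_{A_F}$ of Definition \ref{defFAF}. On each maximal interval where $H$ is non-increasing, the constraints coming from $F$ together with the interior equation should force the boundary flux to follow $H$ while $F$ stays above it and to saturate otherwise; on each maximal interval where $H$ is non-decreasing, say from a local minimum value $m$ to a local maximum value $M$, the admissible boundary flux is forced to be a single constant, the ``right constant'' being $\mathrm{med}(m,a,M)$, where $a$ is read off from where the non-increasing graph of $F$ crosses that branch (so it is the intersection value when there is one, and $m$ or $M$ otherwise). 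Assembling these pieces produces a continuous non-increasing function; I would then check that it coincides with $F_{A_F}$, that the collection of saturation levels is a set limiter in the sense of Definition \ref{defFAF}, and that the construction is finite and finite-valued thanks to \eqref{coerc} and the semi-coercivity \eqref{propF2}. \emph{This is the step I expect to be the main obstacle}; it is precisely the difficulty flagged in the introduction, since for nonconvex $H$ one needs several flat parts $A_1,A_2,\dots$ (the object is only locally a BLN flux), and the matching is delicate because the level sets of $H$ are not intervals.

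\textbf{Relaxed solutions are flux-limited.} With $F_{A_F}$ in hand I would prove the two halves separately. The subsolution implication is the ``easy'' one: by the reduction step, a relaxed subsolution with $F$ satisfies, on every critical slope, an inequality that is weaker than or equivalent to the $F_{A_F}$-flux-limited one, by the construction of the previous paragraph. The supersolution implication is the substantive half: one must check that on every critical slope the quantity $\max(F_{A_F}(\bar p),H(\bar p))$ governing the relaxed supersolution test coincides with the one governing the $F_{A_F}$-flux-limited supersolution test, which is exactly what the construction in the previous paragraph arranges; combined with the reduction of test functions, this upgrades a relaxed supersolution with $F$ to a flux-limited supersolution with $F_{A_F}$.

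\textbf{Uniqueness of the set limiter.} Finally, suppose $A$ is any set limiter for which every relaxed solution of \eqref{eqHJ} is a flux-limited solution with $F_A$. Then the flux-limited problems with $F_A$ and with $F_{A_F}$ have the same set of solutions. Testing this against the linear stationary solutions $u(t,x)=-H(p)\,t+p\,x$, admissible exactly when $H(p)$ is a boundary level compatible with the flux limiter, together with the critical solutions realizing each flat part, lets one recover the effective flux function from the family of solutions, so $F_A=F_{A_F}$ as functions; since the map $A\mapsto F_A$ is injective on set limiters (Definition \ref{defFAF}), this forces $A=A_F$, which gives the uniqueness.
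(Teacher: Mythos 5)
Your overall architecture is the paper's: reduce the boundary test functions to critical slopes, build an effective flux $F_{A_F}$, pass between relaxed and flux-limited solutions via the sandwich $\min(F,H)\leq F_{A_F}\leq\max(F,H)$ at those slopes, and get uniqueness by plugging in the linear solutions $u=-H(p)t+px$ (this last step is exactly Lemma \ref{memeflux}). But the step you yourself flag as the main obstacle is where the proof actually lives, and your sketch of it does not go through as stated. You propose to read off the ``right constants'' branch by branch, on ``each maximal interval where $H$ is non-decreasing,'' via a median formula $\mathrm{med}(m,a,M)$. A continuous coercive $H$ need not decompose into finitely many (or even countably many well-separated) monotone branches, so this description does not define $A_F$ in general; moreover your claim that compactness of the relevant slopes lets you restrict to \emph{finitely many} critical slopes is false for the same reason (the set limiter of Definition \ref{defA} is an arbitrary indexed family). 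The paper replaces the branch decomposition by the pointwise quantities $p^{-}=\sup\{q<p:H(q)\geq H(p)\}$ and $p^{+}=\inf\{q>p:H(q)\leq H(p)\}$ of Definition \ref{defpm1}, and defines $A_F$ by a maximality condition of $H(p)$ among points $q$ with $F(q)\geq H(q)$ (resp. $F(q)\leq H(q)$) whose intervals $]q^{-},q^{+}[$ meet $]p^{-},p[$ (resp. $]p,p^{+}[$); see Definition \ref{defFAF}.

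The missing mathematics is then the verification that this $A_F$ is a set limiter and that $F_{A_F}$ is well defined, continuous and non-increasing: one must show the flat intervals $]p_{\alpha}^{-},p_{\alpha}^{+}[$ are pairwise disjoint, ordered compatibly with monotonicity of the values $H(p_{\alpha})$, and that every nondegenerate $]p^{-},p[$ or $]p,p^{+}[$ meets one of them. This is the content of Proposition \ref{AFfluxlim} (a four-case analysis) together with Lemmas \ref{pmoins} and \ref{disjoint}, and none of it is present in your proposal; without it the reduction theorem (Theorem \ref{threduc}) has nothing to reduce to and the sandwich inequality cannot be checked. Two smaller slips: in the supersolution half the relaxed test is governed by $\max(F,H)$, not $\max(F_{A_F},H)$ (the point is precisely that $F(p_{\alpha})\leq H(p_{\alpha})=F_{A_F}(p_{\alpha})$ at the critical slopes with $p_{\alpha}\neq p_{\alpha}^{+}$); and the sub- and supersolution halves are symmetric rather than one being substantially harder, provided one uses the asymmetric choice of critical slopes ($p_{\alpha}^{-}\neq p_{\alpha}$ for subsolutions, $p_{\alpha}\neq p_{\alpha}^{+}$ for supersolutions) as in Proposition \ref{propclassification}.
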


\begin{Rmk}
R. Monneau developed independently in \cite{regis} a different approach, in particular, he can deal with the multi-dimensional case for a junction with several branches.
%There is an independent different approach in \cite{regis} where the author proves the classification in the multi-dimensional case for a junction with many branches.
\end{Rmk}

Now let us state the comparison principles.

\begin{theo}[Comparison principles] 
\label{mainthCPnc}
Assume that the Hamiltonian $H:\mathbb{R}\rightarrow\mathbb{R}$ is continuous, the function $F:\mathbb{R}\rightarrow\mathbb{R}$ is continuous, non-increasing and semi-coercive \eqref{propF2}
and the initial datum $u_{0}$ is uniformly continuous.
Moreover, if we have one of the following assumptions, 
\begin{enumerate}
\item (a noncoercive Hamiltonian and a ``coercive'' flux function)
\begin{equation}
\label{propF}
\lim_{p\rightarrow +\infty}F(p)=-\infty,
\end{equation}
\item (a coercive Hamiltonian and a semi-coercive flux function)

$$ \lim\limits_{|p|\rightarrow+\infty}H(p)=+\infty.$$
\end{enumerate}

Then for all (relaxed) sub-solution $u$ and (relaxed) super-solution $v$ of (\ref{eqHJ})-(\ref{IC}) satisfying for some $T>0$ and $C_{T}>0,$
\begin{equation*}
u(t,x)\leq C_{T}(1+x), \quad v(t,x)\geq -C_{T}(1+x), \quad \forall (t,x)\in (0,T)\times [0,+\infty),
\end{equation*}
we have
$$u\leq v \quad \mbox{ in } \quad [0,T)\times [0,+\infty).$$
 
\end{theo}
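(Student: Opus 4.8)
\textbf{Proof proposal for Theorem \ref{mainthCPnc}.}

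The plan is to run the classical doubling-of-variables argument for viscosity solutions, but with a nonstandard penalization that couples the time and space variables so as to exclude the boundary cases in the viscosity inequalities. Suppose by contradiction that $M := \sup_{[0,T)\times[0,+\infty)}(u-v) > 0$. Following the standard localization, we first reduce to a bounded situation: using the linear growth bounds on $u$ and $v$ and a penalization term like $\eta x$ together with a term $\frac{\eta}{T-t}$ (or $\eta t$) to push the supremum away from $t=T$, we may assume the relevant supremum is attained at an interior-in-time point. The key modification is that instead of doubling with $\frac{(t-s)^2}{2\delta}+\frac{(x-y)^2}{2\varepsilon}$ we use $\delta\,\varphi\!\left(\frac{t-s}{\delta},\frac{x-y}{\delta}\right)$ for a carefully chosen convex $\varphi$; the point of the coupling term (as in the example $\frac{(t-s)^2}{2\delta}+\frac{t-s}{\delta}(x-y)+\frac{(x-y)^2}{2\delta}$ from the introduction) is that at a maximum point $(\bar t,\bar s,\bar x,\bar y)$ the gradient slopes $p=\varphi_x$ seen from the $x$-side and $q=-\varphi_x$ seen from the $y$-side are forced into a regime where the boundary relation $u_t+F(u_x)=0$ cannot hold — concretely, one shows that $\bar x=0$ forces $p$ to be so negative (via semi-coercivity \eqref{propF2} and $F$ non-increasing, resp. coercivity of $H$ in case (2)) that the sub-solution boundary inequality already yields a contradiction, and symmetrically for $\bar y=0$; hence the maximum is attained with $\bar x>0$ and $\bar y>0$.

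Once we know the doubled supremum is attained at $\bar x,\bar y>0$, both viscosity inequalities are the interior ones: writing $a = \partial_t[\delta\varphi](\cdot)$ for the common time-slope and $p_1,\,p_2$ for the two spatial slopes coming from the single function $\delta\varphi$, we get $a + H(p_1)\le 0$ from the sub-solution and $a + H(p_2)\ge 0$ from the super-solution, where $p_1$ and $p_2$ are related through the structure of $\varphi$ (in the pure quadratic case $p_1=p_2$; with the coupling one needs $\varphi$ chosen so that $p_1$ and $p_2$ stay close, or equal, along the diagonal). Subtracting gives $H(p_1)\le H(p_2)$ with $|p_1-p_2|\to 0$, and the usual equicontinuity/compactness of the maximizing sequence — obtained from the linear growth control and uniform continuity of $u_0$, which also handles the initial layer $\bar t\to 0$ — lets us pass to the limit $\delta\to 0$, contradicting $M>0$ after reinstating the penalization parameters. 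In case (2) the coercivity of $H$ provides the compactness of the slopes directly; in case (1) it is the combination of $F$ non-increasing with the two-sided limits \eqref{propF2}–\eqref{propF} that plays that role near the boundary, while away from the boundary one argues as usual.

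The main obstacle — and the technical heart of the paper — is the explicit construction of the coupling function $\varphi$ (equivalently, verifying that the example function proposed in the introduction, suitably generalized, does the job for every continuous non-increasing $F$ with $\lim_{-\infty}F=+\infty$ and, in case (1), $\lim_{+\infty}F=-\infty$). One needs $\varphi$ convex enough to act as a genuine penalization (so the maximizing points converge to the diagonal as $\delta\to 0$), but with an off-diagonal/cross term tuned precisely so that the sign of the induced boundary slope contradicts the admissible range of $u_x$ allowed by $u_t+F(u_x)=0$ on $\{x=0\}$; getting these two requirements simultaneously, uniformly in the parameters, is exactly where the argument is delicate, and is the reason the proof does not extend to junctions with several branches. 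I would isolate this as a lemma ("there exists $\varphi$ such that ...") and then the comparison proof above becomes a relatively mechanical application of it together with the standard machinery.
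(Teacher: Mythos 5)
Your outline of case (1) matches the paper's strategy: the proof there is exactly a doubling-of-variables argument in which $\frac{(t-s)^2}{2\delta}+\frac{(x-y)^2}{2\varepsilon}$ is replaced by $\delta\varphi\left(\frac{t-s}{\delta},\frac{x-y}{\delta}\right)$, with the existence of $\varphi$ isolated as a separate statement (Theorem \ref{functionphi}: superlinearity, strict positivity off the origin, and the differential inequalities \eqref{diffineq} $\varphi_t+F(\varphi_x)\geq 0$ for $x\leq 0$, $\leq 0$ for $x\geq 0$, which kill the boundary cases outright). Two remarks on that part. First, you correctly identify the construction of $\varphi$ as the technical heart and defer it; be aware that the paper's construction ($\varphi=f(t)+g(x)+xE(t)$ with $E$ solving an ODE driven by $F^{-1}$) requires $F$ to be a $\mathcal{C}^1$ bijection of $\mathbb{R}$ with $F'<0$, so the theorem is first proved in that restricted setting and the general continuous non-increasing $F$ satisfying \eqref{propF2}--\eqref{propF} is then recovered by approximating $F$ uniformly by such $F_n$ and perturbing $u,v$ by $\mp t/n$; your proposal should include this reduction, since a direct construction for a merely continuous, possibly flat $F$ is not available. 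Second, the mechanism at the boundary is not that the slope is forced ``very negative'': it is that $\varphi$ is built to be a super-solution (resp.\ sub-solution) of the boundary relation on the relevant half-line, so the viscosity inequality with $F$ contradicts \eqref{diffineq} plus the strictly positive term $\eta/(T-t)^2$ directly. This is a cosmetic difference, but it is what makes the ``coercive $F$'' hypothesis \eqref{propF} enter: without it $F^{-1}$ is not defined on all of $\mathbb{R}$ and the construction of $\varphi$ collapses.

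That last point is where your treatment of case (2) has a genuine gap. For a coercive $H$ but only semi-coercive $F$ (bounded below at $+\infty$), the coupling function $\varphi$ cannot be built, and ``the coercivity of $H$ provides the compactness of the slopes directly'' does not address the real obstruction, which sits at the boundary, not in the interior: one cannot arrange $\varphi_t+F(\varphi_x)\leq 0$ for $x\geq 0$ when $F$ does not go to $-\infty$. The paper proves case (2) by a completely different route: it invokes the classification result (Proposition \ref{propclassification}) to show that the relaxed sub- and super-solutions for $F$ coincide with those for a modified $\tilde F$ that agrees with $F$ up to the last crossing point of $F$ and $H$ and is then continued by any continuous $G\leq F$ tending to $-\infty$; since only the relative position of $F$ and $H$ enters the definition of the set limiter, $A_F=A_{\tilde F}$, and one applies case (1) to $\tilde F$. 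Without this reduction (or some substitute exploiting coercivity of $H$ at the boundary, which you would have to construct), case (2) is not proved.
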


%For a coercive Hamiltonian, we have the same theorem for a function $F$ which is semi-coercive instead of (\ref{propF}).
%
%\begin{theo}[Comparison principle for a coercive Hamiltonian and a semi-coercive flux function] 
%\label{mainthCPc}
%Assume that the Hamiltonian $H:\mathbb{R}\rightarrow\mathbb{R}$ is continuous and coercive \eqref{coerc}, 
% the function $F:\mathbb{R}\rightarrow\mathbb{R}$ is continuous, non-increasing and semi-coercive \eqref{propF2},
%and the initial datum $u_{0}$ is uniformly continuous.
%Then for all (relaxed) sub-solution $u$ and (relaxed) super-solution $v$ of (\ref{eqHJ})-(\ref{IC}) satisfying for some $T>0$ and $C_{T}>0,$
%\begin{equation*}
%u(t,x)\leq C_{T}(1+x), \quad v(t,x)\geq -C_{T}(1+x), \quad \forall (t,x)\in (0,T)\times [0,+\infty),
%\end{equation*}
%we have
%$$u\leq v \quad \mbox{ in } \quad [0,T)\times [0,+\infty).$$
% 
%\end{theo}

\section{Viscosity solutions}

In this section, we recall the definitions given in \cite{im} of viscosity solutions for the relaxed and the flux-limited problem and we recall that we need a weak continuity condition for sub-solutions.
\subsection{Relaxed and flux-limited solutions}
Here the class of test functions on $(0,T)\times [0,+\infty)$ is $\mathcal{C}^{1}$.
We say that a test function $\phi$ touches a function $u$ from below (resp. from above) at $(t,x)$ if $u-\phi$ reaches a local minimum (resp. maximum) at $(t,x)$. 

We recall the definition of upper and lower semi-continuous envelopes $u^{*}$ and $u_{*}$ of a (locally bounded) function $u$ defined on $[0,T)\times [0,+\infty)$,
$$ u^{*}(t,x)=\limsup_{(s,y)\rightarrow (t,x)} u(s,y) \quad \mbox{and} \quad u_{*}(t,x)=\liminf_{(s,y)\rightarrow (t,x)} u(s,y).$$ 

\begin{defin}[Relaxed solutions]
Let $u:[0,T)\times [0,+\infty)\rightarrow\mathbb{R}$.
\begin{enumerate}[label=\roman*)]
\item We say that $u$ is a \emph{relaxed sub-solution} (resp. \emph{relaxed super-solution}) of \eqref{eqHJ} in $(0,T)\times [0,+\infty)$ if for all test function $\phi\in \mathcal{C}^{1}$ touching $u^{*}$ (resp. $u_{*}$) from above (resp. from below) at $(t_{0},x_{0})$, we have if $x_{0}>0$,
$$\phi_{t}(t_{0},x_{0})+H(\phi_{x}(t_{0},x_{0}))\leq 0 \quad \mbox{(resp. } \geq 0 \mbox{)}$$
if $x_{0}=0$, 
$$\begin{array}{lll}
\mbox{either} & \phi_{t}(t_{0},0)+H(\phi_{x}(t_{0},0))\leq 0 & \mbox{(resp. } \geq 0 \mbox{)}\\
\mbox{or} & \phi_{t}(t_{0},0)+F(\phi_{x}(t_{0},0))\leq 0 & \mbox{(resp. } \geq 0 \mbox{)}.
\end{array} 
$$
\item We say that $u$ is a \emph{relaxed sub-solution} (resp. \emph{relaxed super-solution}) of \eqref{eqHJ}-\eqref{IC} on $[0,T)\times [0,+\infty)$ if additionally
$$u^{*}(0,x) \leq u_{0}(x) \quad \mbox{(resp. } \quad u_{*}(0,x) \geq u_{0}(x)  \mbox{)} \quad \forall x\in [0,+\infty).$$
\item We say that $u$ is a \emph{relaxed solution} if $u$ is both a relaxed sub-solution and a relaxed super-solution.
\end{enumerate} 
\end{defin}

Let us recall the definition of flux-limited solutions given in \cite{im}.
\begin{defin}[Flux-limited solutions]
Let $u:[0,T)\times [0,+\infty)\rightarrow\mathbb{R}$.
\begin{enumerate}[label=\roman*)]
\item We say that $u$ is a \emph{flux-limited sub-solution} (resp. \emph{flux-limited super-solution}) of \eqref{eqHJ} in $(0,T)\times [0,+\infty)$ if for all test function $\phi\in \mathcal{C}^{1}$ touching $u^{*}$ (resp. $u_{*}$) from above (resp. from below) at $(t_{0},x_{0})$, we have if $x_{0}>0$,
$$\phi_{t}(t_{0},x_{0})+H(\phi_{x}(t_{0},x_{0}))\leq 0 \quad \mbox{(resp. } \geq 0 \mbox{)}$$
if $x_{0}=0$, 
$$\phi_{t}(t_{0},0)+F(\phi_{x}(t_{0},0))\leq 0 \quad \mbox{(resp. } \geq 0 \mbox{)}.$$
\item We say that $u$ is a \emph{flux-limited sub-solution} (resp. \emph{flux-limited super-solution}) of \eqref{eqHJ}-\eqref{IC} on $[0,T)\times [0,+\infty)$ if additionally
$$u^{*}(0,x) \leq u_{0}(x) \quad \mbox{(resp. } \quad u_{*}(0,x) \geq u_{0}(x)  \mbox{)} \quad \forall x\in [0,+\infty).$$
\item We say that $u$ is a \emph{flux-limited solution} if $u$ is both a flux-limited sub-solution and a flux-limited super-solution.
\end{enumerate} 
\end{defin}

\subsection{``Weak continuity''  condition for sub-solutions}
For the same reason as in \cite{im}, we need a weak continuity condition for sub-solutions to get the classification result in section 4. Let us recall that any relaxed sub-solution satisfies automatically the ``weak continuity'' condition if the function $F$ is semi-coercive, that is to say if $F$ satisfies \eqref{propF2}. Precisely, we recall the \cite[Lemma 2.3]{im} without proving it since the proof is the same in our case. 

\begin{Lm} [``Weak continuity'' condition] 
Assume that the Hamiltonian $H:\mathbb{R}\rightarrow\mathbb{R}$ is continuous and coercive, the function $F:\mathbb{R}\rightarrow\mathbb{R}$ is continuous, non-increasing and semi-coercive.
Then any relaxed sub-solution $u$ of \eqref{eqHJ} satisfies for all $t\in (0,T)$ 
$$u(t,0)=\limsup\limits_{(s,y)\rightarrow (t,0), y>0} u(s,y).$$
\end{Lm}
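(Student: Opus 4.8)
The plan is to follow the argument of \cite[Lemma 2.3]{im}. We may and do assume $u$ is upper semi-continuous (replacing it by $u^{*}$), so that the inequality $u(t_{0},0)\ge\limsup_{(s,y)\to(t_{0},0),\,y>0}u(s,y)$ holds for free; everything then reduces to proving, for a fixed $t_{0}\in(0,T)$, the reverse inequality. I would argue by contradiction: assume $u(t_{0},0)-\limsup_{(s,y)\to(t_{0},0),\,y>0}u(s,y)=2\eta>0$. By definition of the $\limsup$ there is $r_{0}>0$, with $[t_{0}-r_{0},t_{0}+r_{0}]\subset(0,T)$, such that $u(s,y)\le u(t_{0},0)-\eta$ whenever $|s-t_{0}|\le r_{0}$ and $0<y\le r_{0}$; moreover, being upper semi-continuous, $u$ is bounded above on the compact set $[t_{0}-r_{0},t_{0}+r_{0}]\times\{0\}$, so the oscillation of $u(\cdot,0)$ near $t_{0}$ is a finite quantity $\Theta$.

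The heart of the proof is to construct, for $p$ arbitrarily large, a $\mathcal{C}^{1}$ test function touching $u$ from above at a boundary point $(\bar s,0)$ close to $(t_{0},0)$, with outward slope $\phi_{x}(\bar s,0)=-p$ as negative as we wish, while keeping $\phi_{t}(\bar s,0)$ controlled by (half of) $\min(H(-p),F(-p))$. I would take $\phi(s,y)=\frac{(s-t_{0})^{2}}{\varepsilon}-py$ and examine a maximum point $(\bar s,\bar y)$ of $u-\phi$ over a small box $[t_{0}-a,t_{0}+a]\times[0,b]$, where $a,b,\varepsilon$ are chosen as functions of $p$. The term $-py$ together with the gap estimate forces $\bar y=0$ as soon as $pb\le\eta/2$; the time penalisation $\frac{(s-t_{0})^{2}}{\varepsilon}$ forces $|\bar s-t_{0}|<a$ as soon as $a^{2}/\varepsilon>\Theta$; and the delicate point is to keep $\varepsilon$ \emph{not too small}, so that $|\phi_{t}(\bar s,0)|\le 2a/\varepsilon$ stays below $\tfrac12\min(H(-p),F(-p))$. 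Balancing these three constraints is possible precisely because $\min(H(-p),F(-p))\to+\infty$ as $p\to+\infty$, by coercivity \eqref{coerc} of $H$ and semi-coercivity \eqref{propF2} of $F$: one can take $a\sim \Theta/\min(H(-p),F(-p))$, then $\varepsilon\sim a^{2}$ and $b\sim\eta/p$, so that $a,b\to 0$ and all constraints hold for $p$ large.

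Once such a point is produced, $(\bar s,0)$ with $\bar s\in(0,T)$ is an admissible touching point, so the relaxed sub-solution condition at the boundary gives
$$\phi_{t}(\bar s,0)+H(-p)\le 0\quad\text{or}\quad\phi_{t}(\bar s,0)+F(-p)\le 0 .$$
Both alternatives are impossible for $p$ large: since $|\phi_{t}(\bar s,0)|\le\tfrac12\min(H(-p),F(-p))$ and $H(-p)\ge\min(H(-p),F(-p))$, $F(-p)\ge\min(H(-p),F(-p))$, each left-hand side is at least $\tfrac12\min(H(-p),F(-p))>0$. This contradiction yields $u(t_{0},0)\le\limsup_{(s,y)\to(t_{0},0),\,y>0}u(s,y)$, hence equality. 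I expect the main obstacle to be exactly the construction in the second paragraph — localising the maximum in time while keeping $\phi_{t}$ small compared with $H(-p)$ and $F(-p)$ — and it is here that both hypotheses are used: the $F$-branch of the relaxed boundary condition is ruled out by semi-coercivity of $F$, and the $H$-branch by coercivity of $H$.
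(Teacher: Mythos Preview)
Your argument is correct and follows exactly the approach the paper itself adopts: the paper does not give a proof but refers to \cite[Lemma~2.3]{im}, and you have reproduced that argument. One small slip: what you call the ``oscillation'' $\Theta$ should really be $\sup_{|s-t_{0}|\le r_{0}}u(s,0)-u(t_{0},0)$ (finite by upper semi-continuity; an upper semi-continuous function need not be bounded below), and your heuristic scaling needs $\varepsilon\sim a^{2}/\Theta$ rather than $\varepsilon\sim a^{2}$ to ensure $a^{2}/\varepsilon>\Theta$ --- but the system of constraints you write down is correct and solvable for $p$ large.
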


\section{Classification of boundary conditions}
In this section, we extend the definitions from \cite{im} of the flux limiter $A$ and the $A$-limited flux function $F_{A}$ to nonconvex coercive Hamiltonians. We obtain the same result of reduction of the set of test functions for the $A$-limited flux functions and the classification result. We show that only the Hamiltonian $H$ and few points of the function $F$ characterize the boundary conditions. Using the result of the fourth section, we prove that the solution of the problem (\ref{eqHJ})-(\ref{IC}) is unique. 

In this section, the Hamiltonian $H:\mathbb{R}\rightarrow \mathbb{R}$ is assumed to be continuous and coercive \eqref{coerc}.

\subsection{Set limiters and limited flux functions}
As for quasi-convex Hamiltonians in \cite{im}, we construct a flux function $F_{A}$ which is constant on some subsets of $\mathbb{R}$.
First, let us give some definitions and lemmas which are used to define the function $F_{A}$.

\begin{figure}
\begin{center}
  \includegraphics[width=7.0cm]{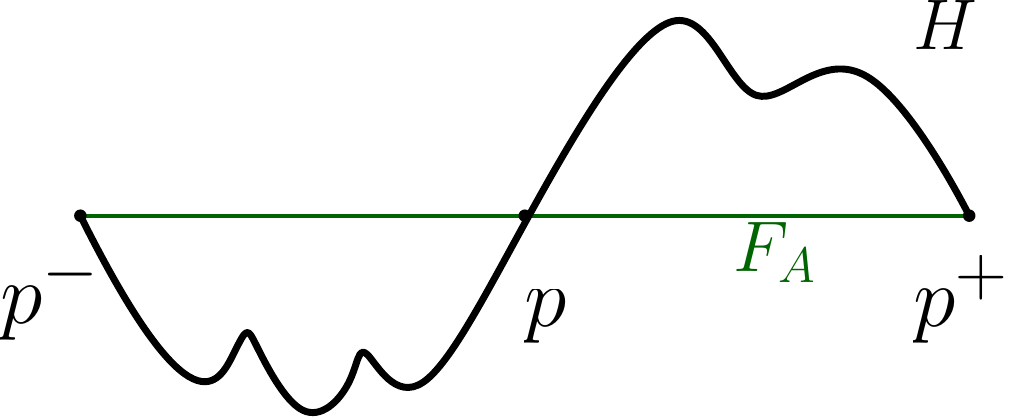}
  \caption{Illustration of $p^{-}$ and $p^{+}$ in Definition \ref{defpm1}}
  \label{defpm}
  \end{center}
  \end{figure}

\subsubsection{Numbers $p^{-}$ and $p^{+}$}

\begin{defin}[Numbers $p^{-}$ and $p^{+}$]
\label{defpm1}
Let $p\in\mathbb{R}.$ We define
$$p^{-}=\sup\left\{  q<p \mbox{  } | \quad H(q)\geq H(p) \right\},$$ and
$$p^{+}=\inf\left\{  q>p \mbox{  } | \quad H(q)\leq H(p) \right\},$$
with the convention $\inf \emptyset =+\infty$.
\end{defin}

\begin{Rmk}
As the Hamiltonian $H$ is coercive, $p^{-}$ is the supremum of a nonempty set.
\end{Rmk}

We deduce the following lemma from the definition.

 \begin{Lm} 
 \label{H-+}
 For all $p\in \mathbb{R}$, we have $$H(p^{-})=H(p)=H(p^{+}).$$
 Moreover, we have
 \begin{equation}
 \label{souspmoins}
 \forall q\in]p^{-},p[, \quad H(q)<H(p),
 \end{equation}
 and
  \begin{equation}
 \label{surpplus}
 \forall q\in]p,p^{+}[, \quad H(q)>H(p).
 \end{equation}
 \end{Lm}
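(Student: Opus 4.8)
The plan is to prove Lemma~\ref{H-+} directly from Definition~\ref{defpm1}, using the coercivity and continuity of $H$. I would treat $p^-$ first, then $p^+$ by an analogous argument, noting that the $p^+=+\infty$ case requires no statement.

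\textbf{Step 1: Properties of $p^-$.} Since $H$ is coercive, the set $S^-=\{q<p \mid H(q)\geq H(p)\}$ is nonempty (for $q$ very negative, $H(q)$ is large) and bounded above by $p$, so $p^-=\sup S^-$ is a well-defined real number with $p^-\leq p$. First I would show $H(p^-)\geq H(p)$: take a sequence $q_n\in S^-$ with $q_n\to p^-$; then $H(q_n)\geq H(p)$ for all $n$, and passing to the limit using continuity of $H$ gives $H(p^-)\geq H(p)$. Next I would show $H(q)<H(p)$ for all $q\in\,]p^-,p[$, i.e.\ \eqref{souspmoins}: if some $q\in\,]p^-,p[$ had $H(q)\geq H(p)$, then $q\in S^-$ and $q>p^-$, contradicting that $p^-$ is an upper bound of $S^-$. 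This immediately forces $p^-<p$ (if $p^-=p$ the interval $]p^-,p[$ is empty, which is fine, but then by continuity $H(p^-)=H(p)$ trivially; more carefully, when $p^-=p$ there is nothing to prove for the strict inequality). Finally, to get $H(p^-)\leq H(p)$ and hence equality: if $p^-=p$ this is immediate; if $p^-<p$, pick any sequence $q_n\downarrow p^-$ with $q_n\in\,]p^-,p[$, so $H(q_n)<H(p)$, and letting $n\to\infty$ with continuity gives $H(p^-)\leq H(p)$. Combined with Step~1's first inequality, $H(p^-)=H(p)$.

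\textbf{Step 2: Properties of $p^+$.} The set $S^+=\{q>p \mid H(q)\leq H(p)\}$ may be empty, in which case $p^+=+\infty$ by convention and the claimed identity $H(p^+)=H(p)$ as well as \eqref{surpplus} are understood to hold vacuously (indeed if $S^+=\emptyset$ then $H(q)>H(p)$ for all $q>p$, which is exactly \eqref{surpplus} with $p^+=+\infty$). If $S^+\neq\emptyset$, then $p^+=\inf S^+\in\mathbb{R}$ with $p^+\geq p$. By the same limiting argument as in Step~1 (continuity of $H$ along a sequence in $S^+$ decreasing to $p^+$) one gets $H(p^+)\leq H(p)$. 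For the reverse inequality and \eqref{surpplus}: any $q\in\,]p,p^+[$ lies below $\inf S^+$, hence $q\notin S^+$, hence $H(q)>H(p)$; this proves \eqref{surpplus}, and then taking $q_n\uparrow p^+$ with $q_n\in\,]p,p^+[$ and using continuity gives $H(p^+)\geq H(p)$, so $H(p^+)=H(p)$.

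\textbf{Main obstacle.} There is no serious obstacle here; this is a routine argument. The only points that require a little care are (i) the bookkeeping of the degenerate cases $p^-=p$ and $p^+=+\infty$ (or $p^+=p$), where the open intervals in \eqref{souspmoins}--\eqref{surpplus} are empty and the statements hold vacuously, and (ii) making sure each limiting argument stays on the correct side of $p$ so that continuity of $H$ can be applied to pass inequalities to the limit. Coercivity is used exactly once, to guarantee $S^-\neq\emptyset$ so that $p^-$ is genuinely real.
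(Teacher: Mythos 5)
Your proposal is correct and follows essentially the same route as the paper: derive $H(p^{-})\geq H(p)$ from the definition of the supremum together with continuity, get the strict inequality on $]p^{-},p[$ directly from the fact that $p^{-}$ is an upper bound of the defining set, and then send $q\to p^{-}$ to obtain the reverse inequality (and symmetrically for $p^{+}$). You are merely more explicit than the paper about the degenerate cases $p^{-}=p$, $p^{+}=p$ and $p^{+}=+\infty$, which the paper's one-line proof glosses over.
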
 
 
 \begin{proof}[Proof of Lemma \ref{H-+}]
 The second part of the lemma is a consequence of the definition of $p^{-}$ and $p^{+}$.
 Let us prove the first part.
 By definition, we have $H(p^{-})\geq H(p)$ and $\forall q \in ]p^{-},p[, \mbox{ } H(q)<H(p)$. Sending $q\rightarrow p^{-}$ and by continuity of $H$, we deduce $H(p^{-})\leq H(p)$ so $H(p^{-})=H(p)$. By the same arguments, we have  $H(p)=H(p^{+})$.
 \end{proof}

On Figure \ref{defpm}, the position of $H$ compared to $H(p)$ is illustrated.

Let us give the following useful lemma.

\begin{Lm}
 \label{pmoins} We have the following properties.
\begin{enumerate}
\item Assume $]p^{-},p[\cap ]q^{-},q[\neq \emptyset$. We have $H(p)\leq H(q)$ if and only if $[p^{-},p]\subset [q^{-},q]$ i.e., $q^{-}\leq p^{-}<p\leq q$.
\item Assume $]p,p^{+}[\cap ]q,q^{+}[\neq \emptyset$. We have $H(p)\leq H(q)$ if and only if $[q,q^{+}]\subset [p,p^{+}]$ i.e., $p\leq q<q^{+}\leq p^{+}$.
\item If $]p^{-},p[\cap ]q,q^{+}[\neq\emptyset$, then $H(p)>H(q)$.
\end{enumerate}
 \end{Lm}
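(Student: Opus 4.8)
The plan is to reduce all three statements to the two inequalities \eqref{souspmoins} and \eqref{surpplus} from Lemma~\ref{H-+}, which say that on $]p^-,p[$ the Hamiltonian stays strictly below $H(p)$ and on $]p,p^+[$ it stays strictly above $H(p)$. The guiding geometric picture is that the intervals $[p^-,p]$ are precisely the ``descending to the value $H(p)$'' regions and the $[p,p^+]$ are the ``ascending from $H(p)$'' regions, so two overlapping intervals of the same type must be nested, with nesting direction controlled by which endpoint value is larger.

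For item (1): suppose $]p^-,p[\cap\,]q^-,q[\neq\emptyset$ and $H(p)\le H(q)$. Pick a point $r$ in the intersection. I would first argue $p\le q$: if instead $q<p$, then $q\in\,]p^-,p[$ (since $r>p^-$ and $q>r>p^-$, while $q<p$), so by \eqref{souspmoins} $H(q)<H(p)$, contradicting $H(p)\le H(q)$; hence $p\le q$. Next I show $p^-\ge q^-$ is false, i.e. $q^-\le p^-$. Suppose $p^-<q^-$; then $q^-\in\,]p^-,p[$ provided $q^-<p$. We do have $q^-<q$ and, using $r\in\,]q^-,q[\cap\,]p^-,p[$ so $r<p$ and $r>q^-$... here one must be a little careful: if $q^-\ge p$ then $[p^-,p]\subset[q^-,q]$ already holds once $q^-\le p^-$ fails — so I instead directly do the following cleaner argument. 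Since $r\in\,]p^-,p[$ we get $H(r)<H(p)\le H(q)$, and since $r\in\,]q^-,q[$ with $q^-=\sup\{s<q\mid H(s)\ge H(q)\}$, the set $\{s<q\mid H(s)\ge H(q)\}$ cannot meet $]r,q[$... Actually the clean route: by \eqref{souspmoins} applied at $q$, $H<H(q)$ strictly on $]q^-,q[$; and $H(p^-)=H(p)\le H(q)$. If $p^-\in\,]q^-,q[$ we'd need $H(p^-)<H(q)$, which is allowed, so this alone is not a contradiction — the real point is to use the \emph{definition} of $q^-$ as a supremum: $H(p^-)=H(p)\le H(q)$ means $p^-$ is a candidate in the defining set of $q^-$ \emph{if} $p^-<q$, hence $p^-\le q^-$. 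Combined with $p^-\ge q^-$ being what we want to rule out, wait — I want $q^-\le p^-$, and $p^-\le q^-$ would give equality or contradiction; let me just state it correctly in the writeup: from $p^-<p\le q$ and $H(p^-)\ge H(q)$ (which holds since $H(p^-)=H(p)$ and... no, only $H(p)\le H(q)$). The correct deduction is: $q^-=\sup\{s<q: H(s)\ge H(q)\}$; since $H(p)\le H(q)$ is the hypothesis, points near $p^-$ need not qualify. So the genuinely correct argument for ``$q^-\le p^-$'' uses item-3-type reasoning: if $q^-\in\,]p^-,p[$ then by part (3) (proved first) $H(q)>H(p)$, wait that's $]p^-,p[\cap\,]q,q^+[$. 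I will therefore \textbf{prove part (3) first}, then parts (1) and (2), since (3) is the ``incompatibility'' statement that makes the nesting arguments in (1)–(2) go through.

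So the order is: (3), then (1), then (2) (the latter by the symmetric/reflection argument $p\mapsto -p$, $H(p)\mapsto H(-p)$, which swaps the roles of $p^-$ and $p^+$). For (3): suppose $r\in\,]p^-,p[\cap\,]q,q^+[$. Then \eqref{souspmoins} gives $H(r)<H(p)$ and \eqref{surpplus} gives $H(r)>H(q)$, hence $H(q)<H(p)$, which is the claim. (One should also note the strictness: if $H(p)=H(q)$ were allowed we'd still get a contradiction, so in fact $H(p)>H(q)$ strictly.) For (1), with (3) in hand: the direction ``$[p^-,p]\subset[q^-,q]\Rightarrow H(p)\le H(q)$'' is immediate from Lemma~\ref{H-+} since $H(p)=H(p^-)$, $p^-\in[q^-,q]$, and... hmm, need $H(p^-)\le H(q)$; but $p^-\in[q^-,q]$ only gives $H\le H(q)$ on the \emph{open} interval and $=H(q)$ at $q^-$, which is $\le H(q)$, fine — actually if $p^-=q^-$ then $H(p^-)=H(q^-)=H(q)$ and if $p^-\in\,]q^-,q]$... if $p^-=q$ then $p>q=p^-$ impossible, so $p^-\in[q^-,q[$, giving $H(p^-)\le H(q)$ hence $H(p)\le H(q)$. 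Conversely assume $H(p)\le H(q)$ and $r\in\,]p^-,p[\cap\,]q^-,q[$. First, $p\le q$: else $q\in\,]p^-,p[$ (as $p^-<r<q<p$), so \eqref{souspmoins} gives $H(q)<H(p)$, contradiction. Second, $q^-\le p^-$: else $p^-\in\,]q^-,q[$; we have $p^-<q$ (since $p^-<p\le q$) and $p^->q^-$ by assumption, so indeed $p^-\in\,]q^-,q[$, and then \eqref{souspmoins} at $q$ gives $H(p^-)<H(q)$ which is consistent — so to get a contradiction I use part (3): is $p^-\in\,]p^-{}',p^-[\cap\,]q,q^+[$? No. The right tool: $]q^-,q[\ni p^-$ and $p^-<p$, and consider whether $]q^-,q[$ meets $]p^-,p[$... it does, at $r$, but also $p^-$ is the left endpoint. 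I think the cleanest finish is: since $p^-\in\,]q^-,q[$, \eqref{souspmoins} gives $H(p^-)<H(q)$, i.e. $H(p)<H(q)$; that's not yet a contradiction with $H(p)\le H(q)$ — so actually the hypothesis should be read as ``$H(p)\le H(q)$ iff nesting,'' and when $H(p)<H(q)$ strictly we could still have $q^-=p^-$? No: if $q^-=p^-$ then $H(q^-)=H(q)$ and $H(p^-)=H(p)<H(q)$, contradiction since $q^-=p^-$. So $q^-<p^-$ would be the remaining case to exclude, and if $q^-<p^-$ with $p^-<q$ then $p^-\in\,]q^-,q[$ and we just showed that forces $H(p)<H(q)$, consistent — hmm so $q^-<p^-$ is actually \emph{not} excluded?! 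Then $[p^-,p]\subset[q^-,q]$ still holds (strict containment). Good — so $q^-\le p^-$ is exactly right and I don't need to exclude $q^-<p^-$; I only need to exclude $q^->p^-$. And $q^->p^-$ with the overlap forces $p^-<q$... wait if $q^->p^-$ then is $p^-\in\,]q^-,q[$? Need $p^->q^-$: no, we're in the case $q^->p^-$, so $p^-<q^-$, so $p^-\notin\,]q^-,q[$. Then: $r\in\,]p^-,p[\cap\,]q^-,q[$ with $q^->p^-$. Then $q^-\in\,]p^-,p[$? Need $q^-<p$: we have $q^-<q$... not obviously $<p$. If $q^-\ge p$: then since $p\le q$ (shown) and $q^-<q$, we have $p\le q^-<q$, and with $p^-<p$ this gives $[p^-,p]\subset[q^-,q]$ requires $q^-\le p^-$, but $q^-\ge p>p^-$, contradiction with the desired conclusion — but we're trying to \emph{derive} the conclusion, so if $q^-\ge p$ the nesting fails and we need instead to contradict $H(p)\le H(q)$: with $p\le q^-<q$, point $p\in[q^-,q]$, actually $p\in\,]q^-,q[$ unless $p=q^-$; if $p\in\,]q^-,q[$ then \eqref{souspmoins} at $q$ gives $H(p)<H(q)$, consistent again. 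Ugh. \textbf{This is the main obstacle: the case analysis for (1) is fiddly}, and I expect the paper handles it by a short clean argument I'm not immediately reconstructing — likely by first establishing that the collection $\{[q^-,q]\}$ is totally ordered by inclusion on overlaps, using \eqref{souspmoins} and part (3), and reading off (1) as a corollary.

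In the writeup I will: (i) prove (3) in two lines from \eqref{souspmoins}–\eqref{surpplus}; (ii) prove (1) by the argument ``take $r$ in the overlap; show $p\le q$ using \eqref{souspmoins}; show $q^-\le p^-$ by noting that otherwise one of the endpoints $q^-$ or $p$ lands inside an open interval on which $H$ is pinned, contradicting $H(p)\le H(q)$ via \eqref{souspmoins} and (3); the converse is immediate from Lemma~\ref{H-+}''; (iii) deduce (2) from (1) applied to $\tilde H(p):=H(-p)$, under which $\tilde p^{\,-}=-p^+$ and $\tilde p^{\,+}=-p^-$. The converses and the $p\le q$ / nesting implications are routine; the only point requiring care, which I'll spell out in full, is the endpoint-chasing for the forward direction of (1).
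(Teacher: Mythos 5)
Your proof of part (3) is correct and in fact cleaner than the paper's (which argues by cases on whether $H(p)=H(q)$): for $r$ in the intersection, \eqref{souspmoins} and \eqref{surpplus} give $H(q)<H(r)<H(p)$ directly. The converse direction of (1) and the step $p\le q$ are also fine. But there are two genuine gaps. First, you never complete the forward direction of (1): you get stuck on whether $q^{-}<p$ and explicitly concede you cannot reconstruct the argument. The missing observation is that $q^{-}<p$ is immediate from the hypothesis: any $r\in\,]p^{-},p[\,\cap\,]q^{-},q[$ satisfies $q^{-}<r<p$. With that in hand the paper's argument is two lines: if by contradiction $p^{-}<q^{-}$, then $q^{-}\in\,]p^{-},p[$, so \eqref{souspmoins} gives $H(q^{-})<H(p)$; but $H(q^{-})=H(q)\ge H(p)$ by Lemma \ref{H-+} and the hypothesis, a contradiction. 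Hence $q^{-}\le p^{-}<p\le q$. Your ``one of the endpoints lands inside a pinned interval'' sketch is the right idea, but as written it is not a proof, and your own exploration shows the execution failing precisely at this point.

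Second, your reduction of (2) to (1) via $\tilde H(p):=H(-p)$ rests on a false identity. Because $p^{-}$ is defined with the inequality $H(q)\ge H(p)$ while $p^{+}$ is defined with $H(q)\le H(p)$, a pure spatial reflection does not exchange them: for $\tilde H(x)=H(-x)$ and $P=-p$ one computes $\tilde P^{-}=-\inf\{s>p:\ H(s)\ge H(p)\}$, which in general differs from $-p^{+}=-\inf\{s>p:\ H(s)\le H(p)\}$ (if $H$ is strictly increasing just to the right of $p$, the first infimum equals $p$ while $p^{+}$ can be much larger). The correct symmetry is $\hat H(x):=-H(-x)$, which does give $\hat P^{-}=-p^{+}$, reverses the inequality $H(p)\le H(q)$, and therefore also reverses the inclusion, consistently with the fact that in (2) the smaller value of $H$ corresponds to the larger interval. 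The paper simply runs the mirror of the argument for (1) directly; either fix works, but your stated reduction as written does not.
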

 
 \begin{proof}[Proof of Lemma \ref{pmoins}]
 Let us prove the first point. The second point is very similar to the first one so we skip the proof.
 Assume that $H(p)\leq H(q)$. If by contradiction $p>q$, then since $]p^{-},p[\cap ]q^{-},q[\neq \emptyset$, we have $p^{-}<q<p$. We deduce that $$H(q)<H(p)\leq H(q)$$ which gives a contradiction. So we deduce that $p\leq q$. Moreover, since $]p^{-},p[\cap ]q^{-},q[\neq \emptyset$, we have $q^{-}<p\leq q$. Assume by contradiction that $p^{-}<q^{-}$, then $$H(p^{-})=H(p)\leq H(q)=H(q^{-}),$$
 but $q^{-}\in ]p^{-},p[$, which gives a contradiction with Lemma \ref{H-+}.
 So we deduce that $[p^{-},p]\subset [q^{-},q]$.
 Assume now that $[p^{-},p]\subset [q^{-},q]$. In particular we have $p\in [q^{-},q]$, hence $H(p)\leq H(q)$.
   
Let us prove the third point. 
Assume that 
\begin{equation}
\label{pq}
]p^{-},p[\cap ]q,q^{+}[\neq\emptyset,
\end{equation}
then we have $q\leq p$. 
 Necessarily by Lemma \ref{H-+}, we have $H(p)\geq H(q)$. 
 If by contradiction, we have $H(p)=H(q)$, then either $q=p$ so $q^{-}=p^{-}$ or $q\leq p^{-}$ so $q^{+}\leq p^{-}$. But these two cases gives a contradiction with \eqref{pq}. So we deduce that $H(p)>H(q)$.
 \end{proof}

\subsubsection{Set limiters and limited flux functions}

\begin{defin}[Set limiter $A$]
\label{defA}
The set $A$ is called a \emph{set limiter} if $A$ is a set of points of $\mathbb{R}$ indexed by $I$, $A=(p_{\alpha})_{\alpha\in I}$, such that 
\begin{enumerate}
\item $\forall \alpha \in I$, $p_{\alpha}^{-}\neq p_{\alpha}^{+},$
\item For $\alpha_{1}, \alpha_{2} \in I$, if $p_{\alpha_{1}}<p_{\alpha_{2}}$ then $H(p_{\alpha_{1}})\geq H(p_{\alpha_{2}}),$
\item \begin{itemize}
\item $\forall p\in \mathbb{R}$ such that $p^{-}<p$, $\exists\alpha\in I$ such that $]p^{-},p[\cap ]p_{\alpha}^{-},p_{\alpha}^{+}[\neq \emptyset$,
\item $\forall p\in \mathbb{R}$ such that $p<p^{+}$, $\exists\alpha\in I$ such that $]p,p^{+}[\cap ]p_{\alpha}^{-},p_{\alpha}^{+}[\neq \emptyset$.
\end{itemize}
\end{enumerate}
\end{defin}

\begin{Rmk} $A$ is not empty as the Hamiltonian $H$ is coercive.
\end{Rmk}

We deduce the following lemma which allows to define the flux function.
 \begin{Lm}
 \label{disjoint}
 If $p_{1}<p_{2}$ and $H(p_{1})\geq H(p_{2})$ then we have $]p_{1}^{-},p_{1}^{+}[\cap]p_{2}^{-},p_{2}^{+}[=\emptyset$.
 In particular, the intervals $]p_{\alpha}^{-},p_{\alpha}^{+}[$ for $\alpha\in I$ are disjoint.
 \end{Lm}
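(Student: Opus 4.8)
The plan is to argue by contradiction. Suppose $p_1<p_2$ and $H(p_1)\ge H(p_2)$, yet the open intervals $]p_1^-,p_1^+[$ and $]p_2^-,p_2^+[$ share a point $x$. The whole idea is to pin $x$ strictly between $p_1$ and $p_2$, so that the refinements \eqref{souspmoins} and \eqref{surpplus} of Lemma~\ref{H-+} force $H(x)$ to be simultaneously $>H(p_1)$ and $<H(p_2)$, which contradicts $H(p_1)\ge H(p_2)$.

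First I would locate the two base points relative to the other interval. If $p_2<p_1^+$, then $p_1<p_2<p_1^+$, so \eqref{surpplus} gives $H(p_2)>H(p_1)$, impossible; hence $p_1^+\le p_2$ (and in particular $p_1^+\neq+\infty$, so the convention $\inf\emptyset=+\infty$ causes no trouble here). Symmetrically, if $p_2^-<p_1$, then $p_2^-<p_1<p_2$, so \eqref{souspmoins} gives $H(p_1)<H(p_2)$, impossible; hence $p_1\le p_2^-$. Next, since $x\in\,]p_1^-,p_1^+[\,\cap\,]p_2^-,p_2^+[$, we have $p_2^-<x<p_1^+$, so combining with the two bounds just obtained, $p_1\le p_2^-<x<p_1^+\le p_2$. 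Therefore $x\in\,]p_1,p_1^+[$, whence $H(x)>H(p_1)$ by \eqref{surpplus}, and $x\in\,]p_2^-,p_2[$, whence $H(x)<H(p_2)$ by \eqref{souspmoins}; thus $H(p_1)<H(p_2)$, a contradiction. This establishes the first assertion.

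The ``in particular'' statement then follows at once: for $\alpha_1\neq\alpha_2$ in $I$, say with $p_{\alpha_1}<p_{\alpha_2}$, the second condition of Definition~\ref{defA} gives $H(p_{\alpha_1})\ge H(p_{\alpha_2})$, and the first assertion yields disjointness of $]p_{\alpha_1}^-,p_{\alpha_1}^+[$ and $]p_{\alpha_2}^-,p_{\alpha_2}^+[$. I do not anticipate a real obstacle here; the only thing needing care is the interplay of strict and non-strict inequalities — one obtains $p_1\le p_2^-$ and $p_1^+\le p_2$ only as non-strict bounds, but recovers the strict chain $p_1<x<p_2$ from $x>p_2^-$ and $x<p_1^+$ — together with checking that the argument is not derailed by $p_1^+=+\infty$, which the very first step excludes.
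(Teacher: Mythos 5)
Your proof is correct. It differs from the paper's in that the paper disposes of this lemma in one line by citing Lemma~\ref{pmoins}, whose three cases (intersections of $]p^-,p[$ with $]q^-,q[$, of $]p,p^+[$ with $]q,q^+[$, and of $]p^-,p[$ with $]q,q^+[$) one would then have to combine, after splitting each interval $]p_i^-,p_i^+[$ at the point $p_i$, into a small case analysis. You instead bypass Lemma~\ref{pmoins} entirely and argue directly from the strict inequalities \eqref{souspmoins} and \eqref{surpplus} of Lemma~\ref{H-+}: first locating $p_1^+\le p_2$ and $p_1\le p_2^-$, then trapping a common point $x$ in $]p_1,p_1^+[\,\cap\,]p_2^-,p_2[$ to force $H(p_1)<H(x)<H(p_2)$. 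This is a cleaner, self-contained derivation that avoids the multi-case bookkeeping the paper's route implicitly requires; what it gives up is only the reuse of Lemma~\ref{pmoins}, which the paper needs elsewhere anyway. Your attention to the convention $p_1^+=+\infty$ is well placed and handled correctly (the contradiction in the first step already rules it out, since \eqref{surpplus} is valid on $]p_1,+\infty[$ in that case), and the deduction of the ``in particular'' statement from condition 2 of Definition~\ref{defA} matches what the paper intends.
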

 
 \begin{proof}[Proof of Lemma \ref{disjoint}]
This lemma is a direct consequence of Lemma \ref{pmoins}.
 \end{proof}

Now we can define the $A$-limited flux function.

\begin{defin}[Function $F_{A}$]
\label{funcFA}
Let $A$ be a set limiter. The function $F_{A}:\mathbb{R}\rightarrow\mathbb{R}$ defined by 
$$F_{A}(p)=\left\{ \begin{array}{ll}
H(p_{\alpha}) & \mbox{ if } p\in [p_{\alpha}^{-},p_{\alpha}^{+}], \mbox{ for } \alpha\in I\\
H(p) & \mbox{ elsewhere}
\end{array}\right.$$
is called a \emph{$A$-limited flux function}.
\end{defin} 

\begin{prop}
\label{FAdec}
The function $F_{A}$ is well-defined, continuous and non-increasing.
\end{prop}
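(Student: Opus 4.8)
The plan is to verify the three assertions in order: well-definedness, continuity, and monotonicity, exploiting Lemma \ref{H-+}, Lemma \ref{pmoins} and Lemma \ref{disjoint}.

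\emph{Well-definedness.} The only way the definition could be ambiguous is if some $p$ lies in two distinct intervals $[p_{\alpha_1}^-,p_{\alpha_1}^+]$ and $[p_{\alpha_2}^-,p_{\alpha_2}^+]$, or if a $p$ in such an interval also falls in the "elsewhere" case. By Lemma \ref{disjoint}, the \emph{open} intervals $]p_\alpha^-,p_\alpha^+[$ are pairwise disjoint, so the only possible overlap of two closed intervals is at a shared endpoint; there I would use Lemma \ref{H-+}, which gives $H(p_{\alpha_i}^-)=H(p_{\alpha_i})=H(p_{\alpha_i}^+)$, to conclude that both prescriptions assign the same value $H(p_{\alpha_1})=H(p_{\alpha_2})$ at the common endpoint. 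Likewise, if $p=p_\alpha^-$ or $p=p_\alpha^+$ is simultaneously an endpoint, the "elsewhere" clause is irrelevant since $p\in[p_\alpha^-,p_\alpha^+]$; and if $p$ is genuinely outside every closed interval, the value $H(p)$ is unambiguous. Hence $F_A$ is well-defined.

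\emph{Continuity.} On each open interval $]p_\alpha^-,p_\alpha^+[$ the function is the constant $H(p_\alpha)$, hence continuous; on the open set $\mathbb{R}\setminus\bigcup_\alpha[p_\alpha^-,p_\alpha^+]$ it equals $H$, which is continuous by hypothesis. It remains to check continuity at the endpoints $p_\alpha^\pm$. Approaching $p_\alpha^-$ from inside the interval gives the limit $H(p_\alpha)$; approaching from the left, $F_A$ takes values $H(q)$ for $q$ just below $p_\alpha^-$ (or constants on adjacent intervals accumulating at $p_\alpha^-$, which by the same endpoint argument also converge to the relevant $H$-value), and by continuity of $H$ together with $H(p_\alpha^-)=H(p_\alpha)$ (Lemma \ref{H-+}) this limit is again $H(p_\alpha)$. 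The symmetric argument handles $p_\alpha^+$. One subtlety I would address carefully: whether endpoints of different intervals can accumulate; since on the left of $p_\alpha^-$ the function is squeezed between $H$-values tending to $H(p_\alpha^-)=H(p_\alpha)$, continuity still follows. Thus $F_A$ is continuous everywhere.

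\emph{Monotonicity.} This is the part I expect to require the most care. I would show $F_A$ is non-increasing by taking $p<q$ and comparing $F_A(p)$ and $F_A(q)$ according to which of the two points lie inside some interval of the limiter. If both lie outside all intervals, then $F_A(p)=H(p)$, $F_A(q)=H(q)$, and I must argue $H(p)\ge H(q)$: here I use condition 3 of Definition \ref{defA} together with Lemma \ref{pmoins} — the point is that a point $p$ with $p$ not in any $[p_\alpha^-,p_\alpha^+]$ must be a local "slope" point, and combining $p^-<p$ (or $p<p^+$) with the covering property forces $H$ to be monotone between consecutive intervals, with the ordering dictated by condition 2 on the $H(p_\alpha)$. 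If $p\in[p_{\alpha_1}^-,p_{\alpha_1}^+]$ and $q\in[p_{\alpha_2}^-,p_{\alpha_2}^+]$ with $\alpha_1\ne\alpha_2$, then $p<q$ forces $p_{\alpha_1}<p_{\alpha_2}$ (using disjointness, Lemma \ref{disjoint}), so $F_A(p)=H(p_{\alpha_1})\ge H(p_{\alpha_2})=F_A(q)$ by condition 2. The mixed cases (one point inside an interval, the other outside, or both inside the same interval) are handled by combining these observations with Lemma \ref{H-+} to transfer values to the relevant endpoints $p_\alpha^\pm$ and then applying the same comparison. The main obstacle is organizing this case analysis cleanly and making rigorous the claim that, between two consecutive intervals of the limiter, the Hamiltonian $H$ is actually monotone decreasing — this is where condition 3 of Definition \ref{defA} (the covering condition) does the real work, ensuring no "uncovered" non-monotone bump of $H$ survives.
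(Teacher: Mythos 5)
Your treatment of well-definedness and continuity is correct and matches the paper's (which simply invokes Lemma \ref{disjoint} and Lemma \ref{H-+}); your extra care about shared endpoints of the closed intervals is welcome. The monotonicity case where $p$ and $q$ lie in two distinct limiter intervals is also correctly reduced to condition 2 of Definition \ref{defA} via Lemma \ref{disjoint}.

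The gap is in the remaining --- and central --- case, where at least one of the two points lies outside every $[p_\alpha^-,p_\alpha^+]$, so that one must show $H(p)\ge H(q)$ directly. You assert that conditions 2 and 3 of Definition \ref{defA} ``force $H$ to be monotone between consecutive intervals'' and you yourself flag making this rigorous as the main obstacle, but you give no mechanism for it; as written this is a restatement of the conclusion rather than a proof (and ``consecutive intervals'' need not even be well defined, since the family $(p_\alpha)_{\alpha\in I}$ may be infinite and the intervals may accumulate). The paper closes exactly this gap with an intermediate-level construction that your sketch is missing. Arguing by contradiction and reducing (by replacing a point lying in a limiter interval by the corresponding $p_\alpha$) to the situation $p<q$ with $H(p)=F_A(p)<F_A(q)=H(q)$, one sets $p_1=\inf\{r\ge p:\ H(r)=\tfrac{H(p)+H(q)}{2}\}$ and $q_1=\sup\{r\le q:\ H(r)=\tfrac{H(p)+H(q)}{2}\}$. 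These satisfy $p_1^-<p<p_1\le q_1<q<q_1^+$, so condition 3 of Definition \ref{defA} applies to $p_1$ on the left and to $q_1$ on the right and, after a short case analysis via Lemma \ref{pmoins}, produces limiter points $p_\alpha<p_1\le q_1<p_{\alpha'}$ with $H(p_\alpha)<H(p_1)=H(q_1)<H(p_{\alpha'})$, contradicting condition 2. Some device of this kind --- locating where $H$ crosses an intermediate level and exploiting that the crossing points satisfy $p_1^-<p$ and $q<q_1^+$ so that the covering condition applies to them --- is what actually makes conditions 2 and 3 bite, and it is absent from your proposal.
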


\begin{figure}
\begin{center}
  \includegraphics[width=15.5cm]{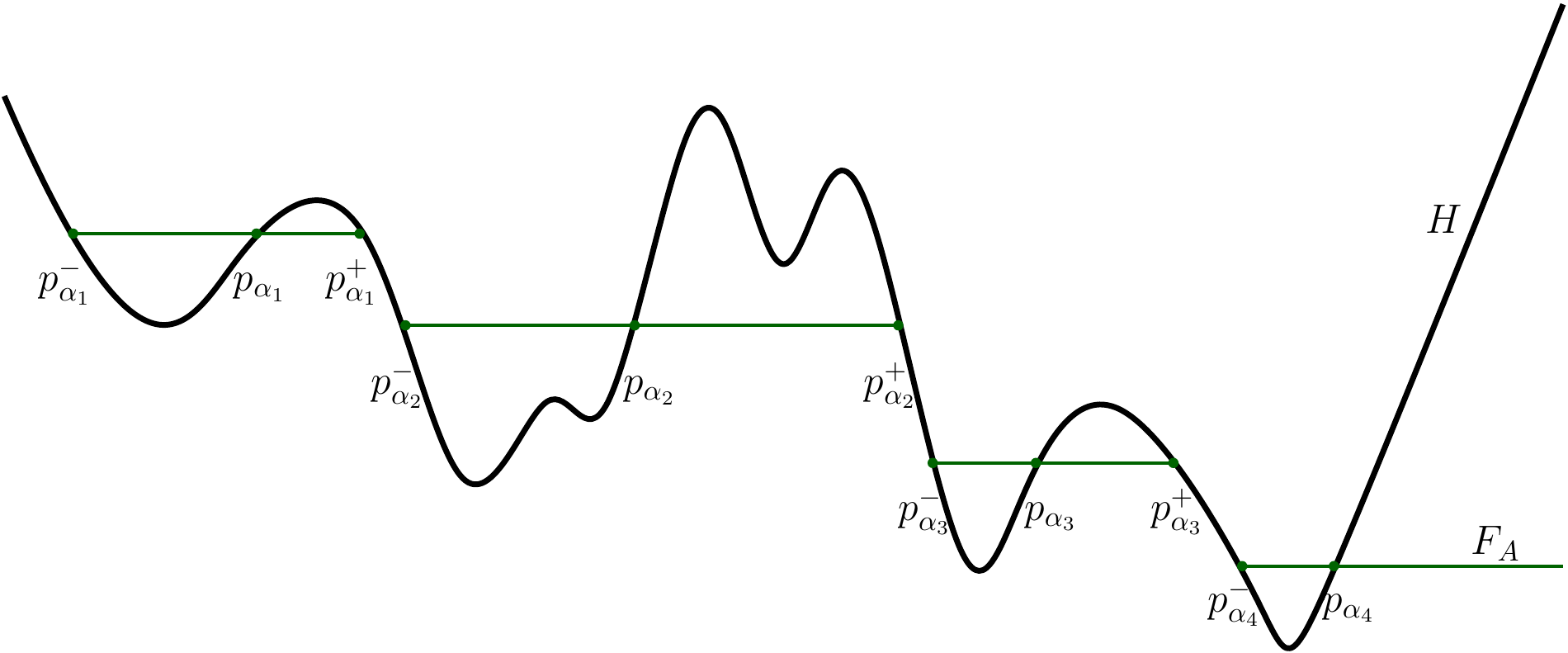}
  \caption{Illustration of a function $F_{A}$ in Definition \ref{funcFA}}
  \label{figFA}
  \end{center}
  \end{figure}

We give an example of a $A$-limited flux in Figure \ref{figFA}.

 \begin{proof}[Proof of Proposition \ref{FAdec}] 
Lemma \ref{disjoint} ensures that the function $F_{A}$ is well-defined and Lemma \ref{H-+} ensures that $F_{A}$ is continuous.
 Let us prove that $F_{A}$ is non-increasing. 
 Assume by contradiction that there exists $p<q$ such that $F_{A}(p)<F_{A}(q)$.  Without loss of generality, we assume that $p<q$ such that $H(p)=F_{A}(p)<F_{A}(q)=H(q)$. Indeed, if we have $p\in [p_{\alpha}^{-}, p_{\alpha}^{+}]$ for $\alpha\in I$, we also have $p_{\alpha}<q$ and  $H(p_{\alpha})=F_{A}(p_{\alpha})=F_{A}(p)<F_{A}(q)$. We can use the same argument for $q$, if $q\in [p_{\alpha'}^{-}, p_{\alpha'}^{+}]$ for $\alpha'\in I$. 
 
 Let $p_{1}=\inf\left\{ r\geq p \mbox{ } | \mbox{ } H(r)=\frac{H(p)+H(q)}{2} \right\}$ and $q_{1}=\sup\left\{ r\leq q \mbox{ } | \mbox{ } H(r)=\frac{H(p)+H(q)}{2} \right\}.$ We have $$p_{1}^{-}<p<p_{1}\leq q_{1} <q <q_{1}^{+},$$ and 
\begin{equation}
\label{p1q1}
H(p)<H(p_{1})=H(q_{1})<H(q).
\end{equation} 
 Using $3.$ of Definition \ref{defA}, there exists $\alpha\in I$ such that
 $$]p_{1}^{-}, p_{1}[\cap]p_{\alpha}^{-},p_{\alpha}^{+}[ \neq \emptyset.$$
 We distinguish two cases. 
 
 If  $]p_{1}^{-}, p_{1}[\cap]p_{\alpha}^{-},p_{\alpha}[ \neq \emptyset,$ then using $1.$ of Lemma \ref{pmoins}, we deduce $H(p_{\alpha})<H(p_{1})$ and $p_{\alpha}<p_{1}$. Indeed, if by contradiction we have $H(p_{\alpha})
 \geq H(p_{1})$, then by $1.$ of Lemma \ref{pmoins}, we deduce that $p\in [p_{1}^{-},p_{1}]\subset [p_{\alpha}^{-},p_{\alpha}]$.
 Hence, we have $$H(p)=F_{A}(p)=F_{A}(p_{\alpha})=H(p_{\alpha})\geq H(p_{1}),$$ which gives a contradiction with \eqref{p1q1}.
 We deduce that $$H(p_{\alpha})=F_{A}(p_{\alpha})<H(p_{1})$$ and $[p_{\alpha}^{-},p_{\alpha}]\subset [p_{1}^{-},p_{1}]$ with $1.$ of Lemma \ref{pmoins}, hence $p_{\alpha}<p_{1}.$
 
 If $]p_{1}^{-}, p_{1}[\cap]p_{\alpha},p_{\alpha}^{+}[ \neq \emptyset,$ then $p_{\alpha}<p_{1}$ and using $3.$ of Lemma \ref{pmoins}, we deduce that 
 $$H(p_{\alpha})=F_{A}(p_{\alpha})<H(p_{1}).$$
 
 By symmetric arguments, we also have $\alpha'\in I$ such that 
 $$H(p_{\alpha'})=F_{A}(p_{\alpha'})>H(q_{1}),$$ and $q_{1}<p_{\alpha'}.$
 
Combining these conclusions, we deduce that $$p_{\alpha}<p_{1}<q_{1}<p_{\alpha'},$$ and $$H(p_{\alpha})<H(p_{1})=H(q_{1})<H(p_{\alpha'}),$$ which gives a contradiction with $2.$ of Definition \ref{defA}. We deduce that $F_{A}$ is non-increasing.
 \end{proof}

We give the following lemma which is useful for the next subsection. 
 
\begin{Lm}
\label{positionHFA}
The function $F_{A}$ satisfies the following properties,
\begin{enumerate}
\item for $\alpha\in I, \quad  \forall p\in]p_{\alpha}^{-},p_{\alpha}[, \quad F_{A}(p)>H(p),$
\item for $\alpha\in I, \quad\forall p\in]p_{\alpha},p_{\alpha}^{+}[, \quad F_{A}(p)<H(p),$
\item If $ p\notin \bigcup\limits_{\alpha\in I} ]p_{\alpha}^{-},p_{\alpha}[\cup]p_{\alpha},p_{\alpha}^{+}[$, then $F_{A}(p)=H(p)$.
\end{enumerate}
\end{Lm}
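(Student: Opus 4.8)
The plan is to prove the three properties of Lemma \ref{positionHFA} essentially from the definition of $F_A$ together with the structural Lemma \ref{H-+} and the third defining property of a set limiter. Recall that by Definition \ref{funcFA}, $F_A(p) = H(p_\alpha)$ whenever $p \in [p_\alpha^-, p_\alpha^+]$, and $F_A(p) = H(p)$ otherwise; and by Lemma \ref{disjoint} the intervals $]p_\alpha^-, p_\alpha^+[$ are pairwise disjoint, so the value is unambiguous.

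\begin{proof}[Proof of Lemma \ref{positionHFA}]
For the first point, fix $\alpha \in I$ and $p \in \,]p_\alpha^-, p_\alpha[$. Then $p \in [p_\alpha^-, p_\alpha^+]$, so by Definition \ref{funcFA} we have $F_A(p) = H(p_\alpha)$. On the other hand, by Lemma \ref{H-+} (property \eqref{souspmoins} applied to $p_\alpha$), since $p \in \,]p_\alpha^-, p_\alpha[$ we get $H(p) < H(p_\alpha) = F_A(p)$, which is the claim. The second point is symmetric: for $p \in \,]p_\alpha, p_\alpha^+[$ we again have $F_A(p) = H(p_\alpha)$, while \eqref{surpplus} applied to $p_\alpha$ gives $H(p) > H(p_\alpha) = F_A(p)$.

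For the third point, suppose $p \notin \bigcup_{\alpha \in I}\bigl(]p_\alpha^-, p_\alpha[\,\cup\,]p_\alpha, p_\alpha^+[\bigr)$; we must show $F_A(p) = H(p)$. By Definition \ref{funcFA}, it suffices to show that $p$ does not belong to any $]p_\alpha^-, p_\alpha^+[$; indeed, even if $p$ equals an endpoint $p_\alpha^-$ or $p_\alpha^+$ of such an interval, Lemma \ref{H-+} gives $H(p_\alpha^-) = H(p_\alpha) = H(p_\alpha^+)$, so $F_A(p) = H(p_\alpha) = H(p)$ in that case as well, and if $p = p_\alpha$ for some $\alpha$ then trivially $F_A(p) = H(p_\alpha) = H(p)$. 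So assume for contradiction that $p \in \,]p_\alpha^-, p_\alpha^+[$ for some $\alpha \in I$ with $p \neq p_\alpha$. Then either $p \in \,]p_\alpha^-, p_\alpha[$ or $p \in \,]p_\alpha, p_\alpha^+[$, contradicting the hypothesis on $p$. Hence $p$ lies in no open interval $]p_\alpha^-, p_\alpha^+[$ with $p\ne p_\alpha$, and combining with the boundary and center cases above we conclude $F_A(p) = H(p)$.
\end{proof}

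The argument is almost entirely bookkeeping; the one point that requires a little care — and which I expect to be the only mild subtlety — is handling the endpoints $p_\alpha^{\pm}$ and the centers $p_\alpha$ in the third case, since the set excluded in the hypothesis is the union of the two punctured half-intervals rather than the full closed interval $[p_\alpha^-, p_\alpha^+]$. The resolution is exactly the equality $H(p_\alpha^-) = H(p_\alpha) = H(p_\alpha^+)$ from Lemma \ref{H-+}, which makes $F_A$ agree with $H$ precisely on those leftover points.
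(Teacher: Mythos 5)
Your proof is correct and follows exactly the route the paper intends: the paper dismisses this lemma as "a direct consequence of Lemma \ref{H-+} and Definition \ref{funcFA}", and your write-up simply fills in those details, including the only point of mild care (the endpoints $p_\alpha^{\pm}$ and the centre $p_\alpha$ in the third item, resolved by $H(p_\alpha^-)=H(p_\alpha)=H(p_\alpha^+)$).
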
 

\begin{proof}
This result is a direct consequence of Lemma \ref{H-+} and Definition \ref{funcFA}. 
\end{proof}

\subsection{Reducing the set of test functions}

With this extension of definition of $F_{A}$, as in \cite{im, im2,gue1}, we can prove a theorem for reducing the set of test functions for the $A$-limited flux function. 
We consider functions satisfying a Hamilton-Jacobi equation in $(0,+\infty)$, solution of 
\begin{equation}
\label{eqR*}
u_{t}+H(u_{x})=0 \quad \mbox{ for } \quad (t,x)\in (0,T)\times (0,+\infty).
\end{equation}

\begin{theo}[Reduced set of test functions] 
\label{threduc}
Assume that the Hamiltonian $H$ is continuous and coercive \eqref{coerc}. Let $A$ be a set limiter. For all $\alpha \in A$, let us fix any time independent test function $\phi_{\alpha}(x)$ satisfying $$ \phi_{\alpha}^{\prime}(0)=p_{\alpha}.$$
Given a function $u:(0,T)\times J \rightarrow \mathbb{R}$, the following properties hold true.
\begin{enumerate}[label=\roman*)]
\item If, for $t_{0}\in (0,T)$, $u$ is an upper semi-continuous sub-solution of \eqref{eqR*} and satisfies 
\begin{equation}
\label{continuitefaible}
 u(t_{0},0)= \limsup\limits_{(s,y)\rightarrow (t_{0},x), y\neq 0} u(s,y),
\end{equation}
and if for any test function $\varphi$ touching $u$ from above at $(t_{0},0)$ with 
\begin{equation}
\label{changmfct}
\varphi (t,x) = \psi(t)+\phi_{\alpha}(x)
\end{equation}
where $\psi \in C^{1}(0,+\infty)$ and where $\alpha\in I$ is such that $p_{\alpha}^{-}\neq p_{\alpha}$, we have 
$$\varphi_{t}+F_{A}(\varphi_{x})\leq 0 \quad \mbox{ at } \quad (t_{0},0),$$
then $u$ is a $A$-flux-limited sub-solution at $(t_{0},0)$.

\item If for $t_{0}\in (0,T)$, $u$ is a lower semi-continuous super-solution of \eqref{eqR*} and 
if for any test function $\varphi$ touching $u$ from below at $(t_{0},0)$ with 
\begin{equation*}
\varphi (t,x) = \psi(t)+\phi_{\alpha}(x)
\end{equation*}
where $\psi \in C^{1}(0,+\infty)$ and where $\alpha\in I$ is such that $p_{\alpha}\neq p_{\alpha}^{+}$, we have 
$$\varphi_{t}+F_{A}(\varphi_{x})\geq 0 \quad \mbox{ at } \quad (t_{0},0),$$
then $u$ is a $A$-flux-limited super-solution at $(t_{0},0)$.
\end{enumerate}
\end{theo}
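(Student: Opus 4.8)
The plan is to follow the strategy of Imbert--Monneau \cite{im}: the point is that a general test function touching $u$ from above at $(t_0,0)$ can be replaced, at the level of the gradient it sees at the boundary, by one of the special product-form test functions $\psi(t)+\phi_\alpha(x)$, at the cost of possibly invoking the interior equation \eqref{eqR*} instead. I would treat only i) in detail, since ii) is symmetric (and in fact slightly easier, as no weak-continuity assumption is needed). So let $\varphi\in C^1$ touch $u$ from above at $(t_0,0)$; write $\bar p=\varphi_x(t_0,0)$ and $\bar a=\varphi_t(t_0,0)$. We must show $\bar a+F_A(\bar p)\le 0$, knowing (a) that $u$ is a sub-solution of \eqref{eqR*} in the interior, (b) the weak-continuity property \eqref{continuitefaible}, and (c) the hypothesis of the theorem applied to all admissible product test functions $\psi(t)+\phi_\alpha(x)$.

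The first step is a reduction to the case where the inequality is ``tight'' on the Hamiltonian side: if $\bar a+H(\bar p)\le 0$ already, then I claim $\bar a+F_A(\bar p)\le 0$ too, except possibly when $F_A(\bar p)>H(\bar p)$, i.e.\ (by Lemma \ref{positionHFA}) when $\bar p\in\,]p_\alpha^-,p_\alpha[$ for some $\alpha\in I$. This is the genuinely nonconvex core of the argument and the main obstacle: in the quasi-convex case one only has to worry about the single decreasing branch, whereas here there may be many ``pockets'' $]p_\alpha^-,p_\alpha[$ where $F_A$ sits strictly above $H$. In that situation I would argue as follows. Using \eqref{continuitefaible} and the interior sub-solution property, together with coercivity of $H$ and Lemma \ref{H-+} (which gives $H(p_\alpha^-)=H(p_\alpha)$ and $H<H(p_\alpha)$ strictly on $]p_\alpha^-,p_\alpha[$), one shows that $u$ cannot be touched from above at $(t_0,0)$ by a test function whose boundary slope lies in $]p_\alpha^-,p_\alpha[$ unless the time-derivative obstruction forces $\bar a+H(p_\alpha^-)\le 0$; one then slides the slope down to $p_\alpha^-$ (outside the pocket, where $F_A=H$) by a standard perturbation-of-test-function argument, replacing $\varphi$ near $x=0$ by $\psi(t)+\phi(x)$ with $\phi'(0)$ slightly less than $p_\alpha^-$ and $\phi$ staying above $\varphi$ for $x>0$ small, using that $x\mapsto u(t_0,x)$ cannot increase too fast (again from the interior equation and coercivity). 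This yields $\bar a+F_A(\bar p)\le \bar a+H(p_\alpha)=\bar a+H(p_\alpha^-)\le 0$.

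The second step handles the remaining case, where $\bar a+H(\bar p)>0$; then the interior equation is of no direct use and we must produce the boundary inequality $\bar a+F(\bar p)\le 0$ via the theorem's hypothesis. Here one shows that, possibly after replacing $\varphi$ by a product test function $\psi(t)+\phi_\alpha(x)$ with the same boundary 1-jet, the relevant slope can be taken to be one of the $p_\alpha$: if $\bar p\notin\bigcup_\alpha[p_\alpha^-,p_\alpha^+]$ then $F_A(\bar p)=H(\bar p)$ and we are in the previous case; if $\bar p\in[p_\alpha^-,p_\alpha^+]$, one modifies $\phi$ near $0$ so that its derivative at $0$ becomes exactly $p_\alpha$ while it still touches $u$ from above (using that $F_A$ is constant $=H(p_\alpha)$ on that whole interval, by Definition \ref{funcFA}, so the modification does not change $F_A(\varphi_x)$), and then the hypothesis of the theorem applied to $\psi+\phi_\alpha$ — which is admissible precisely because $p_\alpha^-\ne p_\alpha$ is what we need on the sub-solution side, guaranteed since either $\bar p\ge p_\alpha$ forces nothing, or one uses condition 1 of Definition \ref{defA} that $p_\alpha^-\ne p_\alpha^+$ and the case distinction on which side of $p_\alpha$ the slope $\bar p$ lies — gives $\bar a+F_A(\bar p)\le 0$. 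Finally, putting the two steps together establishes that $u$ is an $A$-flux-limited sub-solution at $(t_0,0)$; statement ii) follows by the same scheme with the inequalities reversed, omitting the appeal to \eqref{continuitefaible}.
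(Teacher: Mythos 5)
Your high-level plan (use the interior equation where $F_A\le H$, and the theorem's hypothesis via a product test function $\psi(t)+\phi_\alpha(x)$ where $F_A>H$) is the right one, but the mechanism you propose has a genuine gap: you never invoke the critical slope lemma (Lemmas \ref{Tpentescri}--\ref{Tpentescri2} of the paper), and without it the two steps do not close. The paper does not split on the sign of $\bar a+H(\bar p)$ at the original slope $\bar p=\varphi_x(t_0,0)$; it first passes to the critical slope $\bar p+\bar q$ with $\bar q\le 0$, at which the interior equation together with \eqref{continuitefaible} yields $\bar a+H(\bar p+\bar q)\le 0$, and then distinguishes whether $\bar p+\bar q$ lies in a pocket $]p_\alpha^-,p_\alpha[$ or not. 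This lemma is also what justifies that the modified test function $\phi(t,0)+\phi_\alpha(x)-\phi_\alpha(0)$, whose slope $p_\alpha$ at $0$ is \emph{strictly larger} than the critical slope, still touches $u$ from above near $(t_0,0)$.

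The concrete failure is in your Step 1. In the pocket case $\bar p\in\,]p_\alpha^-,p_\alpha[$ you try to conclude $\bar a+H(p_\alpha^-)\le 0$ from the interior equation alone, by ``sliding the slope down to $p_\alpha^-$''. This cannot work: first, one can only lower the slope of a test function touching from above down to the critical slope, which may well remain inside $]p_\alpha^-,p_\alpha[$; second, and more fundamentally, the claim that weak continuity plus the interior equation force $\bar a+H(p_\alpha^-)\le 0$ is false. The plane wave $u(t,x)=\bar p x-H(\bar p)t$ with $\bar p\in\,]p_\alpha^-,p_\alpha[$ is an exact interior solution satisfying \eqref{continuitefaible}, touched from above by itself with $\bar a=-H(\bar p)$, and $\bar a+H(p_\alpha^-)=H(p_\alpha)-H(\bar p)>0$ by Lemma \ref{H-+}. (This $u$ is not an $A$-flux-limited sub-solution; it simply fails the theorem's \emph{hypothesis} for $\psi+\phi_\alpha$.) So in the pocket case you must use the hypothesis of the theorem, not the interior equation: one bumps the slope \emph{up} to $p_\alpha$, checks touching from above via the critical slope lemma, and reads off $-\lambda+F_A(p_\alpha)\le 0$, whence $F_A(\bar p)=F_A(p_\alpha)\le\lambda$ by constancy of $F_A$ on $[p_\alpha^-,p_\alpha^+]$. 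Your Step 2 suffers from the same omission (the sub-case $\bar a+H(\bar p)>0$ with $F_A(\bar p)=H(\bar p)$ must be shown to be impossible via the critical slope, and lowering the slope to $p_\alpha$ when $\bar p>p_\alpha$ again needs that lemma).
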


\begin{Rmk}
We only need to consider $p_{\alpha}^{-}\neq p_{\alpha}$ (resp. $p_{\alpha}\neq p_{\alpha}^{+}$) for the sub-solution (resp. super-solution) case. Indeed in $[p_{\alpha}, p_{\alpha}^{+}]$ (resp. $[p_{\alpha}^{-}, p_{\alpha}]$), the function $F_{A}$ is lower (resp. upper) than $H$ that gives directly the result, using the following Lemmas. For example, in \cite{im} for a quasi-convex Hamiltonian and for $F=F_{A_{0}}$ the decreasing part of the Hamiltonian, $A=\{ \pi^{+}(A_{0}) \}$ where $H(\pi^{+}(A_{0}))=A_{0}$ the minimum of $H$, we have $(\pi^{+}(A_{0}))^{-}=\pi^{+}(A_{0})$. That is why the author don't need any test function for this case in \cite[Theorem 2.7 i)]{im}. 
\end{Rmk}

To prove this result, we need the two following lemmas already proven in \cite{im, im2, gue1}. Here we skip the proof on these lemmas. 

\begin{Lm}[Critical slope for sub-solution \cite{im}]
\label{Tpentescri}
Let $u$ be an upper semi-continuous sub-solution of \eqref{eqR*} which satisfies \eqref{continuitefaible}
 and let $\varphi$ be a test function touching $u$ from above at some point $(t_{0},0)$ where $t_{0}\in (0,T)$. Then the critical slope given by 
$$\bar{p}= \inf\left\{
p\in \mathbb{R}: \exists r >0, \quad \varphi(t,x)+px \geq u(t,x), 
\quad \forall (t,x)\in (t_{0}-r,t_{0}+r)\times [0,r)
\right\}$$ 
is finite, satisfies $\bar{p} \leq 0$ and 
$$\varphi_{t}(t_{0},0)+H(\varphi_{x}(t_{0},0)+\bar{p})\leq 0.$$
\end{Lm}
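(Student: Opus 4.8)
The plan is to run the ``critical slope'' argument familiar from the quasi-convex theory of \cite{im,im2,gue1}, after a reduction that isolates the one genuinely delicate point.

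\textbf{Normalisation and the elementary facts.} Subtracting a constant, I may assume $u(t_{0},0)=\varphi(t_{0},0)$; since $\varphi$ touches $u$ from above at $(t_{0},0)$, there is $r_{0}>0$ with $u\le\varphi$ on $Q_{r_{0}}:=(t_{0}-r_{0},t_{0}+r_{0})\times[0,r_{0})$. Hence $p=0$ belongs to the set defining $\bar p$, so this set $S$ is non-empty and $\bar p\le 0$; moreover $p\in S$ (witnessed by some $r$) implies $p'\in S$ with the same $r$ for every $p'\ge p$ (because $x\ge 0$), so $S$ is an interval with $\bar p=\inf S$.

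\textbf{Reduction to one claim.} It suffices to prove: \emph{if $p\in S$, then $\varphi_{t}(t_{0},0)+H(\varphi_{x}(t_{0},0)+p)\le 0$.} Indeed, if $\bar p=-\infty$ then $S=\mathbb R$, so this inequality holds for all $p$; letting $p\to-\infty$ and using coercivity \eqref{coerc} gives $+\infty\le 0$, a contradiction. Thus $\bar p$ is finite and $\le 0$. Finally, picking $p_{n}\in S$ with $p_{n}\downarrow\bar p$, the claim gives $\varphi_{t}(t_{0},0)+H(\varphi_{x}(t_{0},0)+p_{n})\le 0$, and the continuity of $H$ yields $\varphi_{t}(t_{0},0)+H(\varphi_{x}(t_{0},0)+\bar p)\le 0$, which is the last assertion.

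\textbf{Proof of the claim.} Suppose for contradiction that $p\in S$ but $\varphi_{t}(t_{0},0)+H(\varphi_{x}(t_{0},0)+p)>0$, and set $\psi(t,x):=\varphi(t,x)+px+(t-t_{0})^{2}+x^{2}$, so that on a small closed box $\overline{Q_{\rho}}$ one has $u\le\psi$ with equality only at $(t_{0},0)$, and, by continuity, $\psi_{t}+H(\psi_{x})>0$ on $\overline{Q_{\rho}}$. To move the contact point off the boundary $\{x=0\}$, I use a logarithmic barrier: for $\epsilon>0$ let $(t_{\epsilon},x_{\epsilon})$ maximise $u(t,x)-\psi(t,x)+\epsilon\ln x$ over $\overline{Q_{\rho}}\cap\{x>0\}$. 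Since $\epsilon\ln x\to-\infty$ as $x\to0^{+}$ while $u-\psi\le 0$, the maximiser satisfies $x_{\epsilon}>0$; and because the weak-continuity condition \eqref{continuitefaible} places the near-maximisers of $u-\psi$ over $\{x>0\}$ at $(t_{0},0)$, one checks $(t_{\epsilon},x_{\epsilon})\to(t_{0},0)$, so for small $\epsilon$ the point is interior to $Q_{\rho}$. Applying the interior sub-solution property \eqref{eqR*} to $u$ with the $C^{1}$ test function $\psi(t,x)-\epsilon\ln x$ at $(t_{\epsilon},x_{\epsilon})$ gives
\[
\psi_{t}(t_{\epsilon},x_{\epsilon})+H\!\left(\psi_{x}(t_{\epsilon},x_{\epsilon})-\frac{\epsilon}{x_{\epsilon}}\right)\le 0 .
\]
If $\epsilon/x_{\epsilon}\to+\infty$ along a subsequence, coercivity forces the left-hand side to $+\infty$ — impossible. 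Otherwise $\epsilon/x_{\epsilon}$ stays bounded, and passing to the limit yields $\varphi_{t}(t_{0},0)+H(\varphi_{x}(t_{0},0)+p-\ell)\le 0$ for some $\ell\ge 0$; a more careful analysis, as carried out in \cite{im,im2,gue1} — exploiting the minimality of $\bar p=\inf S$ to pin the limiting slope (or, for the applications of the lemma, noting that the version with $p$ replaced by $p-\ell\le 0$ already suffices) — then contradicts $\psi_{t}+H(\psi_{x})>0$ at $(t_{0},0)$. This proves the claim, and with it the lemma.

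\textbf{Where the difficulty lies.} Everything delicate is concentrated in the perturbation step: one must push the maximum strictly into the open region $\{x>0\}$ in order to invoke the interior equation \eqref{eqR*}, while keeping control both of the location of the interior maximiser (it must localise at $(t_{0},0)$) and of the slope of the perturbed test function there. Coercivity \eqref{coerc} of $H$ and the weak-continuity condition \eqref{continuitefaible} for sub-solutions are both essential here; in the non-coercive setting of Theorem~\ref{mainthCPnc}(1) this mechanism collapses, which is exactly why that case is handled instead by the time--space coupling of the doubling variables described in the introduction.
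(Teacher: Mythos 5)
The paper does not actually prove this lemma: it is quoted from \cite{im,im2,gue1} and the proof is explicitly skipped, so your argument has to stand on its own. It does not, because your reduction step rests on a false intermediate claim. You assert that for \emph{every} $p\in S$ one has $\varphi_{t}(t_{0},0)+H(\varphi_{x}(t_{0},0)+p)\leq 0$. Take $H(q)=q^{2}$ (continuous, coercive), $u\equiv 0$ (a sub-solution of \eqref{eqR*} satisfying \eqref{continuitefaible}) and $\varphi\equiv 0$. Then $S=[0,+\infty)$ and $\bar p=0$, so the lemma's conclusion $H(0)\leq 0$ holds; but for $p=1\in S$ your claim reads $H(1)=1\leq 0$, which is false. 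The viscosity-type inequality holds only at the critical slope $\bar p$ itself, and the minimality of $\bar p$ is not a refinement that can be postponed to ``a more careful analysis'': it is the engine of the whole proof. The correct mechanism (as in \cite{im}) is that for $\epsilon>0$ one has $\bar p-\epsilon\notin S$, hence there exist points with $x>0$ arbitrarily close to $(t_{0},0)$ at which $u>\varphi+(\bar p-\epsilon)x$; combined with $u\leq\varphi$ on $\{x=0\}$ and $u\leq\varphi+(\bar p+\epsilon)x$ on a fixed small box, this forces a suitably localized maximum of $u-\varphi-qx$, with $q$ pinned near $\bar p$, to be attained at an \emph{interior} point, where the interior sub-solution inequality applies with gradient close to $\varphi_{x}(t_{0},0)+\bar p$; one then sends $\epsilon\to 0$ and uses the continuity of $H$.

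Your logarithmic barrier does push the contact point into $\{x>0\}$, but precisely because it never invokes $\bar p-\epsilon\notin S$ it loses all control of the slope there: you only obtain $\varphi_{t}(t_{0},0)+H(\varphi_{x}(t_{0},0)+p-\ell)\leq 0$ for an uncontrolled $\ell\geq 0$. In the example above with $p=1$ the barrier computation gives exactly $\ell=1$, i.e.\ $H(0)\leq 0$, which is consistent and yields no contradiction with $\psi_{t}+H(\psi_{x})>0$; so the contradiction you announce at the end of ``Proof of the claim'' is not reached, and the parenthetical ``the version with $p$ replaced by $p-\ell$ already suffices'' does not rescue the statement of the lemma, which is about $\bar p$. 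The same defect infects your finiteness argument for $\bar p$, since it is derived from the false claim (finiteness is recoverable, but again only by producing interior touching points with arbitrarily negative slopes and applying coercivity \eqref{coerc} there). The parts of your write-up that do hold are the elementary ones: $0\in S$, hence $\bar p\leq 0$ and $S$ is an upward-closed interval; and the final passage to the limit $p_{n}\downarrow\bar p$ would be fine if the claim it relies on were true.
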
 

\begin{Lm}[Critical slope for super-solution \cite{im}]
\label{Tpentescri2}
Let $u$ be a lower semi-continuous super-solution of \eqref{eqR*}
 and let $\varphi$ be a test function touching $u$ from below at some point $(t_{0},0)$ where $t_{0}\in (0,T)$. If the critical slope given by 
$$\bar{p}= \sup\left\{ 
p\in \mathbb{R}: \exists r>0, \quad \varphi(t,x)+px \leq u(t,x),  
\forall (t,x)\in (t_{0}-r,t_{0}+r)\times [0,r) 
\right\}$$
is finite, then it satisfies $\bar{p} \geq 0$ and we have 
$$\varphi_{t}(t_{0},0)+H(\varphi_{x}(t_{0},0)+\bar{p})\geq 0.$$
\end{Lm}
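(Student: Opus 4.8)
\textbf{Proof plan for Lemma \ref{Tpentescri2} (critical slope for super-solution).}

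The plan is to mirror the structure of the sub-solution case (Lemma \ref{Tpentescri}), adapting every inequality and extremum to the super-solution setting. First I would fix $\varphi$ touching $u$ from below at $(t_0,0)$ and consider the set
$$ S=\left\{ p\in\mathbb{R}: \exists r>0,\ \varphi(t,x)+px\le u(t,x)\ \forall (t,x)\in (t_0-r,t_0+r)\times[0,r)\right\}, $$
whose supremum is $\bar p$. The first observation is that $S$ is a ``downward-closed'' set: if $p\in S$ with witness radius $r$, then for any $p'<p$ and the same $r$ one has $\varphi(t,x)+p'x=\varphi(t,x)+px+(p'-p)x\le u(t,x)$ because $(p'-p)x\le 0$ on $x\ge 0$. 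Hence $S$ is an interval unbounded below, and $\bar p=\sup S$. Since $\varphi$ touches $u$ from below at $(t_0,0)$, the value $p=0$ already belongs to $S$ (the defining inequality holds with $p=0$ for some small $r$ by the local-minimum property of $u-\varphi$), so $\bar p\ge 0$; this gives the claimed sign of the critical slope.

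The heart of the argument is the viscosity inequality, which I would obtain by passing to the limit along slopes approaching $\bar p$ from below. Under the hypothesis that $\bar p$ is finite, for every $p<\bar p$ we have $p\in S$, so $\varphi(t,x)+px$ touches $u$ from below near $(t_0,0)$. The subtle point is that the touching point may sit on the boundary $x=0$, where \eqref{eqR*} gives no information; the standard device is to perturb so that the contact is forced into the open set $x>0$, where $u$ is a super-solution of $u_t+H(u_x)=0$. Concretely, for $p$ slightly below $\bar p$ the slope is not yet admissible in the strict sense, so there must exist points where $u$ dips just below $\varphi+px$; choosing $p_n\uparrow\bar p$ and extracting the corresponding near-minimum points $(t_n,x_n)$ with $x_n>0$, one applies the interior super-solution inequality
$$ \varphi_t(t_n,x_n)+H\big(\varphi_x(t_n,x_n)+p_n\big)\ge 0, $$
and then lets $n\to\infty$. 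By continuity of $\varphi_t,\varphi_x$ and of $H$, together with $(t_n,x_n)\to(t_0,0)$ and $p_n\to\bar p$, this yields
$$ \varphi_t(t_0,0)+H\big(\varphi_x(t_0,0)+\bar p\big)\ge 0, $$
which is exactly the assertion.

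The main obstacle I anticipate is precisely the step of producing the interior contact points $(t_n,x_n)$ with $x_n>0$ and controlling their convergence to $(t_0,0)$. One must argue that for $p_n$ just below $\bar p$ the function $u-(\varphi+p_nx)$, which is $\ge 0$ near $(t_0,0)$ at the admissible slope but fails to stay $\ge 0$ once $p$ exceeds $\bar p$, has a local minimum attained at an interior point; a localization/penalization (for instance subtracting a small multiple of $|t-t_0|^2$ or adding a term vanishing on the boundary to repel minima from $x=0$) is the natural tool, exactly as carried out in \cite{im,im2,gue1}. Since the excerpt states this lemma is proved there and I may invoke it, I would present the above as the guiding outline and refer to those sources for the technical localization; the sign argument and the limiting inequality are the genuinely new bookkeeping, and they go through verbatim because $H$ and $\varphi$ are continuous and the contact slopes are monotone in $p$.
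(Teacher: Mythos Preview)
The paper gives no proof of this lemma at all: it simply states ``Here we skip the proof on these lemmas'' and refers to \cite{im,im2,gue1}. So there is nothing to compare your outline against in the paper itself; you are in fact supplying more than the paper does, and your decision to invoke the same references for the localization step is exactly what the paper does.

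Your outline is correct in structure (downward-closedness of $S$, $0\in S$ hence $\bar p\ge 0$, interior contact, limit), but there is a small slip in the middle. You write that for $p_n$ slightly \emph{below} $\bar p$ ``the slope is not yet admissible in the strict sense, so there must exist points where $u$ dips just below $\varphi+p_nx$''. This is the wrong side: any $p<\bar p$ \emph{is} in $S$, so $u\ge\varphi+px$ throughout a neighbourhood and the minimum of $u-(\varphi+px)$ stubbornly stays at $(t_0,0)$. The interior contact points are produced by taking slopes slightly \emph{above} $\bar p$: for $\varepsilon>0$ one has $\bar p+\varepsilon\notin S$, hence for every $r>0$ there is $(t_r,x_r)\in(t_0-r,t_0+r)\times[0,r)$ with $u(t_r,x_r)<\varphi(t_r,x_r)+(\bar p+\varepsilon)x_r$; since at $x=0$ one has $u\ge\varphi$, this forces $x_r>0$. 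One then minimizes $u-\varphi-(\bar p+\varepsilon)x$ (with a standard localization penalty) to obtain a genuine interior touching point and applies the viscosity inequality there, getting $\varphi_t+H(\varphi_x+\bar p+\varepsilon)\ge 0$ in the limit, and finally lets $\varepsilon\to 0$. With this correction your plan matches the argument in \cite{im}.
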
 

\begin{proof}[Proof of Proposition \ref{threduc}]
We first prove the results concerning sub-solutions.
\vskip 0.5cm
\textbf{Sub-solution.}
Let $\phi$ be a test function touching $u$ from above at $(t_{0},0)$ and let $\lambda=-\phi_{t}(t_{0},0)$. Let  $p=\phi_{x}(t_{0},0).$ We want to show that 
\begin{equation}
\label{Teqsous}
F_{A}(p)\leq \lambda.
\end{equation} Notice that by lemma \ref{Tpentescri}, there exists $\bar{p}\leq 0$ such that 
$$H(p+\bar{p})\leq \lambda.$$
As $F_{A}$ is non-increasing, we have
$$F_{A}(p)\leq F_{A}(p+\bar{p})$$ and
using Lemma \ref{positionHFA}, if $p+\bar{p}\notin \bigcup\limits_{\alpha\in I}]p_{\alpha}^{-},p_{\alpha}[$  we have 
$$F_{A}(p)\leq F_{A}(p+\bar{p})\leq H(p+\bar{p})\leq \lambda,$$ which proves the result. 

Now if $p+\bar{p}\in ]p_{\alpha}^{-},p_{\alpha}[$ for some $\alpha \in I$ such that $p_{\alpha}^{-}\neq p_{\alpha}$, then 
$$p+\bar{p} <p_{\alpha}=\phi_{\alpha}^{\prime}(0).$$
Let us consider the modified test function
$$\varphi(t,x)=\phi(t,0)+\phi_{\alpha}(x)-\phi_{\alpha}(0).$$
We have 
$$\varphi(t_{0},0)=\phi(t_{0},0)=u(t_{0},0).$$
Let us show that
\begin{equation}
\label{Texp10}
\varphi(t,x)\geq u(t,x), 
\end{equation}
on a neighborhood of $(t_{0},0).$
We have
$$p+\bar{p}=\phi_{x}(t_{0},0)+\bar{p}<\phi_{\alpha}^{\prime}(0),$$
so there exists $p_{1}$ and $p_{2}$ such that $\bar{p}<p_{1}<p_{2}$ and which satisfy
$$p+p_{i}=\phi_{x}(t_{0},0)+p_{i}<\phi_{\alpha}^{\prime}(0), \quad \forall i \in \{1,2\}.$$
As $\phi_{x}$ and $\phi_{\alpha}^{\prime}$ are continuous, on a neighborhood of $(t_{0},0),$ we have
$$\phi_{x}(t,x)+p_{i}<\phi_{\alpha}^{\prime}(x), \quad \forall i \in \{1,2\}.$$
So we have on a neighborhood of $(t_{0},0)$,
 $$\begin{array}{lll}
\phi(t,x) &=& \phi(t,0)+\displaystyle\int_{0}^{x}\phi_{x}(t,y)\mathrm{d}y \\
 & = & \varphi(t,x)+\phi_{\alpha}(0)-\phi_{\alpha}(x)+\displaystyle\int_{0}^{x}\phi_{x}(t,y)\mathrm{d}y\\
  & = & \varphi(t,x)+\displaystyle\int_{0}^{x}(\phi_{x}(t,y)-\phi_{\alpha}^{\prime}(y))\mathrm{d}y \\
  & \leq & \varphi(t,x) -p_{2}x,
\end{array}$$
and by definition of $\bar{p},$ there exists a neighborhood $(t_{0}-r,t_{0}+r)\times [0,r)$ of $(t_{0},0),$ for some $r>0$ such that 
$$\begin{array}{rll}
u(t,x) & \leq & \phi(t,x)+p_{1}x
    \\                   & \leq & \varphi(t,x)+(p_{1}-p_{2})x,\\
    & \leq & u(t,x)
\end{array}$$
so we get \eqref{Texp10}.
\newline
This test function satisfies in particular \eqref{changmfct} so we deduce that
$$ -\lambda+F_{A}(p_{\alpha})\leq 0,$$
so we have as $p+\bar{p}\in ]p_{\alpha}^{-},p_{\alpha}[$ and $F_{A}$ is constant is this interval,
$$F_{A}(p)\leq F_{A}(p+\bar{p})=F_{A}(p_{\alpha})\leq \lambda.$$
 Therefore (\ref{Teqsous}) holds true. 

%Si par l'absurde, il n'existe pas de voisinage de $X_{0}$ tel que (\ref{exp10}) soit vrai, alors il existe $X_{n}\rightarrow X_{0}$ telle que 
%$$\varphi(X_{n}) < u^{*}(X_{n}) \leq \varphi(X_{n})+(p_{1}-p_{2})(x_{n}^{N}-\psi^{N}(X_{n}'))+o(x_{n}^{N}-\psi^{N}(X_{n}')),$$
%donc pour $n$ assez grand, $$(p_{1}-p_{2})(x_{n}^{N}-\psi^{N}(X_{n}'))+o(x_{n}^{N}-\psi^{N}(X_{n}')) < 0,$$
%ce qui est contradictoire donc on a bien (\ref{exp10}).

%et comme $\partial_{N}\varphi$ est continue, pour $X$ dans un voisinage de $X_{0},$
%$$\partial_{N}\varphi(X)+\bar{p}<\phi'_{0}(x_{0}^{N}).$$
%Or 
%$$\begin{array}{lll}
%\varphi(X',x^{N}) &= & \phi(X',x^{N})+\phi(X',x_{0}^{N})-\phi(X',x^{N})+\phi_{0}(x^{N})-\bar{p}(x^{N}-x_{0}^{N})\\
% & = & \phi(X',x^{N})+(\phi'_{0}(x_{0}^{N})-\partial_{N}\phi(X',x_{0}^{N})-\bar{p})(x^{N}-x_{0}^{N})+o(x^{N}-x_{0}^{N}).
%\end{array}$$
%Donc sur un voisinage de $X_{0}$, elle vérifie bien (\ref{exp1}).
%Par définition des sous-solutions réduites, et en utilisant le fait que $\pi^{+}(p',A(p'))-\bar{p}\geq \pi^{+}(p',A(p')),$ et que $A_{0}(p')\leq A(p')$
%$$H^{-}(p',\pi^{+}(p',A(p')))=H^{-}(p',\pi^{+}(p',A(p'))-\bar{p})=A_{0}(p'),$$
%donc 
%$$A(p')=F_{A}(p',\pi^{+}(p',A(p'))-\bar{p})\leq \lambda.$$
%On a donc une contradiction.
Let us prove now the super-solution case.
\vskip 0.5cm
\textbf{Super-solution.} Let $\phi$ be a test function touching $u$ from below at $(t_{0},0).$ Let $\lambda=-\phi_{t}(t_{0},0),$  and  $p=\phi_{x}(t_{0},0).$
We want to show that 
\begin{equation}
\label{sursol}
F_{A}(p)\geq\lambda.
\end{equation}
By Lemma \ref{Tpentescri2}, if $\bar{p}$ is finite, then $\bar{p} \geq 0$ and
\begin{equation}
\label{exp1}
H(p+\bar{p})\geq\lambda. 
\end{equation}
If $\bar{p}=+\infty$ then as $H$ is coercive, the inequality (\ref{exp1}) is true replacing $\bar{p}$ with some large $\tilde{p}$. 
To simplify the notations,  $\bar{p}$ will denote the real number satisfying the inequality (\ref{exp1}) in the first or the second case.

As $F_{A}$ is non-increasing, we have
$$F_{A}(p)\geq F_{A}(p+\bar{p})$$ and
using Lemma \ref{positionHFA}, if $p+\bar{p}\notin \bigcup\limits_{\alpha\in I}]p_{\alpha},p_{\alpha}^{+}[$  we have 
$$F_{A}(p)\geq F_{A}(p+\bar{p})\geq H(p+\bar{p})\geq \lambda,$$ which prove the result. 
Now if $p+\bar{p}\in ]p_{\alpha},p_{\alpha}^{+}[$ for some $\alpha \in I$ such that $p_{\alpha}\neq p_{\alpha}^{+}$, then 
$$p+\bar{p} >p_{\alpha}=\phi_{\alpha}^{\prime}(0).$$
As for the sub-solution case, let us consider the modified test function
$$\varphi(t,x)=\phi(t,0)+\phi_{\alpha}(x)-\phi_{\alpha}(0).$$ Arguing as in the subsolution case, we can show that $\varphi$ touches $u$ from below at $(t_{0},0)$.

This test function satisfies in particular \eqref{changmfct} so we deduce that
$$ -\lambda+F_{A}(p_{\alpha})\geq 0,$$
so we have 
$$F_{A}(p+\bar{p})=F_{A}(p_{\alpha})\geq \lambda.$$
Therefore (\ref{sursol}) holds true.
\end{proof}

\subsection{Proof of the classification result}

To prove Theorem \ref{thclassification}, we first have to define the set limiter $A_{F}$ associated to the function  $F:\mathbb{R}\rightarrow\mathbb{R}$ continuous, non-increasing and semi-coercive \eqref{propF2}.

\begin{defin}[Set limiter $A_{F}$]
\label{defFAF}
The set limiter is $A_{F}$ the set of points $p\in \mathbb{R}$ such that either

\begin{equation}
\label{AFsous}
\left\{ \begin{array}{lll}
(i) & p^{-}\neq p,\\
(ii) & F(p) \geq H(p),\\
(iii) & \forall q\in \mathbb{R} \mbox{ such that }  F(q) \geq H(q) \mbox{ and }   ]q^{-},q^{+}[\cap]p^{-},p[\neq \emptyset,\\
 &  \mbox{ we have } H(q)\leq H(p),
\end{array}
\right.
\end{equation}
or 

\begin{equation}
\label{AFsur}
\left\{ \begin{array}{lll}
(i) & p^{+}\neq p,\\
(ii) & F(p) \leq H(p),\\
(iii) & \forall q\in \mathbb{R} \mbox{ such that }  F(q) \leq H(q) \mbox{ and }   ]q^{-},q^{+}[\cap]p,p^{+}[\neq \emptyset,\\
 &  \mbox{ we have } H(q)\geq H(p).
\end{array}
\right.
\end{equation}
\end{defin}

\begin{figure}
\begin{center}
  \includegraphics[width=15.5cm]{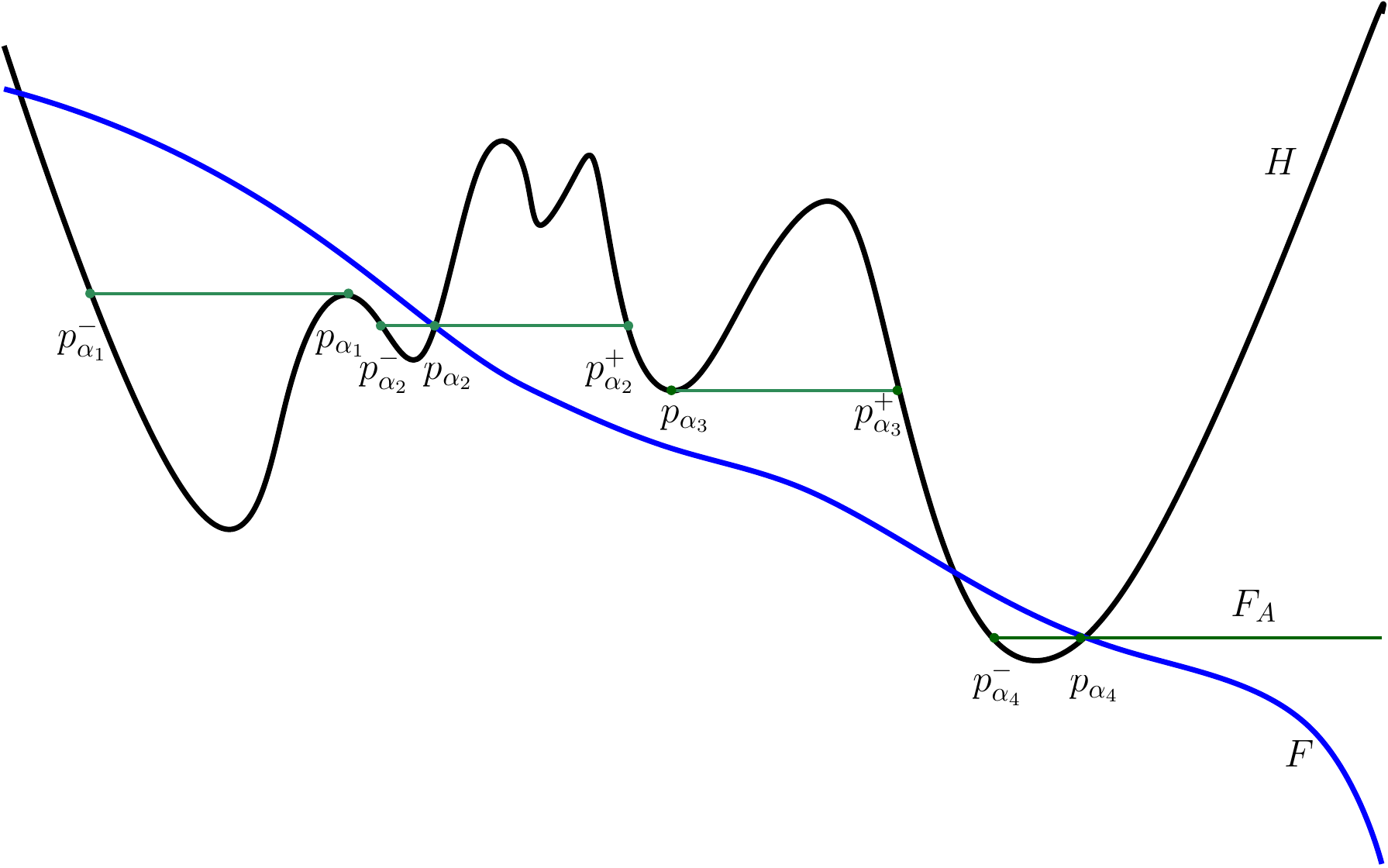}
  \caption{Illustration of a function $F_{A_{F}}$ in Definition \ref{defFAF}}
  \label{figFAF}
  \end{center}
  \end{figure}

We give an example of a $A_{F}$-limited flux function in Figure \ref{figFAF}.
To illustrate the set $A_{F}$, one can see that in the sets where $F\geq H$, the points of $A_{F}$ satisfying \eqref{AFsous} are local maximas. The sets where $F\leq H$, the points of $A_{F}$ satisfying \eqref{AFsur} are local minimas. The points of $A_{F}$ satisfying \eqref{AFsous} and \eqref{AFsur} are intersection points of $F$ with non-decreasing part of $H$ if $H$ has a finite number of minimas (see Figure \ref{figFAF}). 
We show that $p^{-}\neq p$ or $p^{+}\neq p$ for $p\in A_{F}$ characterizes the fact that $p$ satisfies \eqref{AFsous} or \eqref{AFsur}.

\begin{prop}
\label{AFfluxlim}
Let $F:\mathbb{R}\rightarrow\mathbb{R}$ be continuous, non-increasing and semi-coercive, and $A_{F}$ be defined as in Definition \ref{defFAF}, then $A_{F}$ is a set limiter. 
Moreover $A_{F}$ satisfies the following property.
If $p\in A_{F}$ and $p^{-}\neq p$ (resp. $p^{+}\neq p$) then $p$ satisfies \eqref{AFsous} (resp. \eqref{AFsur}).
In particular, if $p^{-}<p<p^{+}$, then $F(p)=H(p)$.
\end{prop}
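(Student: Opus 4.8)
The plan is to verify the three defining properties of a set limiter (Definition \ref{defA}) for $A_F$, and then to establish the stated characterization and its corollary.

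\textbf{Property 1 (each $p\in A_F$ has $p^-\neq p^+$).} If $p$ satisfies \eqref{AFsous} then $p^-\neq p$, and if it satisfies \eqref{AFsur} then $p\neq p^+$; either way $p^-\neq p^+$. So this property is immediate from the definition. I will also record here the easy observation, used repeatedly below, that if $p$ satisfies \eqref{AFsous} then $p$ is a point where $F\geq H$ which is a "local max of $H$ on $[p^-,p]$" in the sense that $H(q)\leq H(p)$ for all $q\in]p^-,p[$ by Lemma \ref{H-+}; symmetrically for \eqref{AFsur}.

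\textbf{Property 2 (monotonicity of $H$ along $A_F$).} Take $p_{\alpha_1}<p_{\alpha_2}$ in $A_F$; I must show $H(p_{\alpha_1})\geq H(p_{\alpha_2})$. The natural approach is a case split on whether each $p_{\alpha_i}$ satisfies \eqref{AFsous} or \eqref{AFsur}. Suppose by contradiction $H(p_{\alpha_1})<H(p_{\alpha_2})$. The key intermediate step (as in the proof of Proposition \ref{FAdec}) is: pick the value $c=\tfrac{H(p_{\alpha_1})+H(p_{\alpha_2})}{2}$, let $p_1=\inf\{r\geq p_{\alpha_1}\mid H(r)=c\}$ and $q_1=\sup\{r\leq p_{\alpha_2}\mid H(r)=c\}$; then $p_{\alpha_1}<p_1\leq q_1<p_{\alpha_2}$, $p_1^-<p_{\alpha_1}$, $q_1^+>p_{\alpha_2}$, $H(p_1)=H(q_1)=c$, with $H<c$ on $]p_1^-,p_1[$ and $H>c$ on... (more precisely I use $H(p_{\alpha_1})<c$ on the left near $p_1$ and $H(p_{\alpha_2})>c$ on the right near $q_1$ via \eqref{souspmoins}--\eqref{surpplus}). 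Now I use condition \eqref{AFsous}(iii) or \eqref{AFsur}(iii) for $p_{\alpha_1}$ and $p_{\alpha_2}$ respectively, together with the fact that $F(p_{\alpha_i})\geq H(p_{\alpha_i})$ or $\leq$, and the continuity/monotonicity of $F$, to locate a point $q$ between $p_{\alpha_1}$ and $q_1$ (resp. $p_1$ and $p_{\alpha_2}$) where $F\geq H$ (resp. $F\leq H$) and whose interval $]q^-,q^+[$ meets $]p_{\alpha_1}^-,p_{\alpha_1}[$ (resp. $]p_{\alpha_2},p_{\alpha_2}^+[$) while $H(q)>H(p_{\alpha_1})$ (resp. $H(q)<H(p_{\alpha_2})$), contradicting (iii). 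The delicate point is the existence of such a $q$: since $F$ is non-increasing and $H$ goes continuously from below $c$ up through $c$ and possibly higher, and $F(p_{\alpha_1})\geq H(p_{\alpha_1})$ while $F$ eventually drops below $H$ on the increasing stretch of $H$ just past $p_1$, the intermediate value theorem applied to $F-H$ gives a crossing; I expect this is the main obstacle and will require careful bookkeeping of which of the four sign-configurations for $(p_{\alpha_1},p_{\alpha_2})$ occurs.

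\textbf{Property 3 (covering) and the characterization.} Fix $p$ with $p^-<p$; I must find $\alpha\in I$ with $]p_\alpha^-,p_\alpha^+[\cap]p^-,p[\neq\emptyset$ (the case $p<p^+$ is symmetric). Compare $F$ and $H$ at $p$. If $F(p)\geq H(p)$, I look among all $q$ with $F(q)\geq H(q)$ and $]q^-,q^+[\cap]p^-,p[\neq\emptyset$ and choose one, call it $p_\alpha$, realizing $\max H$ over that family (the max is attained by coerciveness of $H$, continuity, and semi-coercivity of $F$ which confines the relevant $q$'s to a bounded set); then $p_\alpha$ satisfies \eqref{AFsous} — checking (iii) uses Lemma \ref{pmoins} to see that any $q'$ competing with $p_\alpha$ also competes with $p$ — so $p_\alpha\in A_F$ and it does the job. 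If instead $F(p)<H(p)$, then since $F$ is non-increasing with $\lim_{-\infty}F=+\infty$ and $F(p^-)$... I use that $H(p^-)=H(p)>F(p)\geq$ ... to find, by the intermediate value theorem for $F-H$ on a suitable interval inside $[p^-,p]$ (using $H<H(p)$ strictly on $]p^-,p[$), a point where $F=H$ and then a nearby point satisfying \eqref{AFsur} whose interval meets $]p^-,p[$; again a maximization/minimization selects the right one. Finally, for the characterization: suppose $p\in A_F$ with $p^-\neq p$; if $p$ satisfied only \eqref{AFsur} and not \eqref{AFsous}, then since \eqref{AFsur} gives $F(p)\leq H(p)$ and \eqref{AFsous}(i) holds, the failure must be in (ii) or (iii); a short argument (using that $F\leq H$ and $F\geq H$ can only both... ) shows $F(p)=H(p)$ and then \eqref{AFsur}(iii) combined with $p^-\neq p$ forces \eqref{AFsous}(iii) too, via Lemma \ref{pmoins}. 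The last claim $F(p)=H(p)$ when $p^-<p<p^+$ then follows: $p^-<p$ forces \eqref{AFsous} so $F(p)\geq H(p)$, and $p<p^+$ forces \eqref{AFsur} so $F(p)\leq H(p)$.
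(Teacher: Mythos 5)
Your plan follows the paper's outline (check the three conditions of Definition \ref{defA}, then the characterization), and Property 1 is indeed immediate, but the substantive verifications are either deferred or replaced by constructions that do not obviously work, so there are genuine gaps. For Property 2, you explicitly leave the ``careful bookkeeping'' of the four sign-configurations undone, and your midpoint-level construction with an IVT crossing of $F-H$ does not address the hard case, which is $p_{\alpha_1}$ satisfying \eqref{AFsous} and $p_{\alpha_2}$ satisfying \eqref{AFsur}: there the paper must introduce the first crossing points $q_i$ of $F$ and $H$ together with auxiliary points $r_i$ at the same $H$-level, and show that one of $r_1,r_2$ violates condition (iii) for $p_{\alpha_1}$ or $p_{\alpha_2}$; nothing in your sketch produces these points or verifies that their intervals $]r_i^-,r_i^+[$ meet the required intervals. (Conversely, in the two ``same-type'' cases no IVT is needed at all: the point $\inf\{q>p_{\alpha_1}\mid H(q)\geq H(p_{\alpha_2})\}$ works directly via monotonicity of $F$, and the remaining mixed case is a one-line contradiction with $F$ non-increasing.)

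For Property 3, your selection rule — maximize $H$ over \emph{all} $q$ with $F(q)\geq H(q)$ and $]q^-,q^+[\cap]p^-,p[\neq\emptyset$ — has two unaddressed problems. First, attainment: coercivity bounds this family, but it is not closed, because $q\mapsto q^{\pm}$ is discontinuous (a limit of admissible $q_n$ can have $q^{*+}\leq p^-$ even though each $q_n^+>p^-$), so the supremum of $H$ need not be attained in the family; the paper avoids this by localizing to the compact interval $[p,p_1]$ with $p_1$ the first crossing of $F$ and $H$, on which $F\geq H$ holds throughout and the argmax of $H$ automatically satisfies \eqref{AFsous}(i) because $]p,p^+[\subset[p,p_1]$ forces $p_2^-\leq p^-<p\leq p_2$. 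Second, your maximizer is not guaranteed to satisfy \eqref{AFsous}(i), and the claim that ``any $q'$ competing with $p_\alpha$ also competes with $p$'' goes the wrong way for Lemma \ref{pmoins}: that lemma yields $]p^-,p[\subset]p_\alpha^-,p_\alpha[$, so a competitor meeting $]p_\alpha^-,p_\alpha[$ may miss $]p^-,p[$ entirely. The sub-case $F(p)<H(p)$ and the final characterization are likewise only gestured at (your text contains literal unfinished clauses); the paper's argument there is short but does require re-using the explicit point $p_2$ from the covering step to rule out $F(p)>H(p)$, plus a monotonicity argument for \eqref{AFsur}(iii), neither of which is supplied.
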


\begin{proof}
Let us prove that $A_{F}$ is a set limiter.
The set $A_{F}$ satisfies 1. of Definition \ref{defA} since either $p^{-}\neq p$ or $p^{+}\neq p$.
Let us prove that it satisfies 2. and 3. of Definition \ref{defA}.

\vspace{0.5cm}
\textbf{Step 1: $A_{F}$ satisfies 2. of Definition \ref{defA}.}

Assume by contradiction that there exists $p_{1}, p_{2}\in A_{F}$ such that $p_{1}<p_{2}$ and $H(p_{1})<H(p_{2}).$
We distinguish four cases.

\vspace{0.25cm}
\textbf{Case 1: $p_{1}$ satisfies \eqref{AFsur}, $p_{2}$ satisfies \eqref{AFsous}}

We have $$ F(p_{1})\leq H(p_{1})<H(p_{2})\leq F(p_{2}).$$
But $F$ is non-increasing, so we get a contradiction and we have $H(p_{1})\geq H(p_{2})$.

\vspace{0.25cm}
\textbf{Case 2: $p_{1}, p_{2}$ satisfy \eqref{AFsous}}

Let $p=\inf\left\{q>p_{1} \mbox{  } | \mbox{  } H(q)\geq H(p_{2}) \right\}.$ We have $$p^{-}<p_{1}^{-}<p_{1}<p\leq p_{2}$$ and $$F(p)\geq F(p_{2})\geq H(p_{2})=H(p)>H(p_{1}).$$  So $p_{1}$ does not satisfy \eqref{AFsous} (iii) with $p$, that gives a contradiction.
	
\vspace{0.25cm}
\textbf{Case 3: $p_{1}, p_{2}$ satisfy \eqref{AFsur}}

Let $p=\sup\left\{q<p_{2} \mbox{  } | \mbox{  } H(q)\leq H(p_{1}) \right\}.$
By symmetry with case 2, we prove that $p_{2}$ does not satisfy \eqref{AFsur} (iii) and get a contradiction. 

\vspace{0.25cm}
\textbf{Case 4: $p_{1}$ satisfies \eqref{AFsous}, $p_{2}$ satisfies \eqref{AFsur}}

We have $F(p_{1})\geq H(p_{1})$ and $F(p_{2})\leq H(p_{2})$. 
Let us define $$q_{1}=\inf\left\{q\geq p_{1} \mbox{  } | \mbox{  } H(q)=F(q) \right\},$$
 $$r_{1}=\inf\left\{q\geq p_{1} \mbox{  } | \mbox{  } H(q)=H(q_{1}) \right\},$$
and $$q_{2}=\sup\left\{q\leq p_{2} \mbox{  } | \mbox{  } H(q)=F(q) \right\},$$
 $$r_{2}=\sup\left\{q\leq p_{2} \mbox{  } | \mbox{  } H(q)=H(q_{2}) \right\}.$$
Then if $H(r_{1})=H(q_{1})>H(p_{1})$, we have $$r_{1}^{-}<p_{1}^{-}<p_{1}<r_{1}$$ and $F(r_{1})\geq F(q_{1})=H(q_{1})=H(r_{1})$. So $p_{1}$ does not satisfy \eqref{AFsous} (iii) with $r_{1}$ that gives a contradiction. We deduce that $H(q_{1})\leq H(p_{1})$, so $$H(r_{2})=H(q_{2})=F(q_{2})\leq F(q_{1})=H(q_{1})\leq H(p_{1})<H(p_{2})$$ and we have $$r_{2}<p_{2}<p_{2}^{+}<r_{2}^{+},$$ and $F(r_{2})\leq F(q_{2})=H(q_{2})=H(r_{2})$.  So $p_{2}$ does not satisfy \eqref{AFsur} (iii) with $r_{2}$ that gives a contradiction. 

\vspace{0.5cm}
\textbf{Step 2: $A_{F}$ satisfies 3. of Definition \ref{defA}.}

Let $p\in \mathbb{R}$ such that $p^{-}\neq p^{+}$. We distinguish four cases.

\vspace{0.25cm}
\textbf{Case 1: $p^{-}\neq p$ and $F(p)<H(p)$.}

Let $p_{1}=\sup\left\{q\leq p \mbox{  } | \mbox{  } H(q)=F(q) \right\}$ and $p_{2}=\sup\left\{q \in [p_{1},p] \mbox{  } | \mbox{  } H(q)=\min\limits_{s\in [p_{1},p]} H(s) \right\}.$

The number $p_{1}$ could be $-\infty$ but as $H$ is coercive, $p_{2}<+\infty$. 

We are going to prove that $p_{2}\in A_{F}$ and $]p^{-},p^{+}[\cap]p_{2}^{-},p_{2}^{+}[ \neq \emptyset$.
Observe first that $p_{2}$ satisfies \eqref{AFsur} (i), (ii). Let us prove that it satisfies  \eqref{AFsur} (iii).
Assume by contradiction that there exists $q\in \mathbb{R}$ such that 
\begin{equation}
\label{ineq}
F(q)\leq H(q),
\end{equation} 

\begin{equation}
\label{empt}
]q^{-},q^{+}[\cap ]p_{2},p_{2}^{+}[\neq \emptyset
\end{equation} 
and 
\begin{equation}
\label{contrad}
H(q)<H(p_{2}).
\end{equation}
We distinguish three possibilities for $q$.
If $q<p_{1}$ then using \eqref{ineq} and \eqref{contrad}, we have $F(q)< H(p_{2})\leq H(p_{1})\leq F(p_{1})$, that gives a contradiction with the fact that $F$ is non-increasing.
If $q\in [p_{1},p]$ then by definition of $p_{2}$, $H(p_{2})\leq H(q)$ that gives a contradiction with \eqref{contrad}.
If $q>p$ then using \eqref{contrad}, we deduce that $q^{-}\geq p_{2}^{+}$ that gives a contradiction with \eqref{empt}.
We deduce that $p_{2}\in A_{F}$. Moreover, $p_{2}$ satisfies
\begin{equation}
\label{empt2}
]p^{-},p[\cap ]p_{2}^{-},p_{2}^{+}[\neq \emptyset.
\end{equation} 
Indeed, we have for $r\in]p^{-},p[, H(r)<H(p)$ by Lemma \ref{H-+}, so $H(p_{2})<H(p)$ and $p_{2}<p<p_{2}^{+}.$ 

\vspace{0.25cm}
\textbf{Case 2: $p^{-}\neq p$ and $F(p)\geq H(p)$.}

Let $p_{1}=\inf\left\{q\geq p \mbox{  } | \mbox{  } H(q)=F(q) \right\}$ and $p_{2}=\inf\left\{q \in [p,p_{1}] \mbox{  } | \mbox{  } H(q)=\max\limits_{s\in [p,p_{1}]} H(s) \right\}.$ We are going to prove that $p_{2}\in A_{F}$ and satisfies \eqref{empt2}.
We have $$p_{2}^{-}\leq p^{-}<p\leq p_{2}\leq p_{1},$$ so we deduce that $p_{2}$ satisfies \eqref{AFsous} (i) and by definition, we deduce that $p_{2}$ satisfies \eqref{AFsous} (ii). 
Let us prove that it satisfies  \eqref{AFsous} (iii).
Assume by contradiction that there exists $q\in \mathbb{R}$ such that 
\begin{equation}
\label{ineq2}
F(q)\geq H(q),
\end{equation} 
$q$ satisfies 
\begin{equation}
\label{empt4}
]q^{-},q^{+}[\cap ]p_{2}^{-},p_{2}[\neq \emptyset
\end{equation} 
and 
\begin{equation}
\label{contrad2}
H(q)>H(p_{2}).
\end{equation}
We distinguish three possibilities for $q$.
If $q>p_{1}$ then using \eqref{ineq2} and \eqref{contrad2}, we have $F(q)>F(p_{1})$, that gives a contradiction with the fact that $F$ is non-increasing.
If $q\in [p^{-},p_{1}]$ then $H(p_{2})\geq H(q)$ that gives a contradiction with \eqref{contrad2}.
If $q<p^{-}$ then $q^{+}\leq p_{2}^{-}$ that gives a contradiction with \eqref{empt4}.
We deduce that $p_{2}\in A_{F}$ and satisfies \eqref{empt2}. 

\vspace{0.25cm}
\textbf{Case 3: $p\neq p^{+}$ and $F(p)\leq H(p)$.}

Using the same arguments as in cases 1 and 2 with $p_{1}=\sup\left\{q\leq p \mbox{  } | \mbox{  } H(q)=F(q) \right\}$ and $p_{2}=\sup\left\{q \in [p_{1},p] \mbox{  } | \mbox{  } H(q)=\min\limits_{s\in [p_{1},p]} H(s) \right\}$, we deduce that $p_{2}\in A_{F}$ and satisfies 
\begin{equation}
\label{empt3}
]p,p^{+}[\cap ]p_{2}^{-},p_{2}^{+}[\neq \emptyset .
\end{equation}

\vspace{0.25cm}
\textbf{Case 4: $p\neq p^{+}$ and $F(p)> H(p)$.}

Using the same arguments as in cases 1 and 2 with $p_{1}=\inf\left\{q\geq p \mbox{  } | \mbox{  } H(q)=F(q) \right\}$ and $p_{2}=\inf\left\{q \in [p,p_{1}] \mbox{  } | \mbox{  } H(q)=\max\limits_{s\in [p,p_{1}]} H(s) \right\}$, we deduce that $p_{2}\in A_{F}$ and satisfies \eqref{empt3}. 

\vspace{0.5cm}
Now let us prove the property of $A_{F}$.
We only prove the result for $p^{+}\neq p$ since it is very similar for $p^{-}\neq p$. If $p$ satisfies \eqref{AFsur}, we are done. If $p$ satisfies \eqref{AFsous}, let us prove that it also satisfies \eqref{AFsur} in this case. By hypothesis, it satisfies \eqref{AFsur} (i). Let us prove that it satisfies \eqref{AFsur} (ii). Assume by contradiction that $F(p)>H(p)$. Consider $p_{2}$ defined in Step 2 Case 2. Then $p_{2}$ gives a contradiction with \eqref{AFsous} (iii), so $p$ satisfies \eqref{AFsur} (ii) and $F(p)=H(p)$. 

Now let us prove that $p$ satisfies \eqref{AFsur} (iii).
Assume by contradiction that there exists $q\in \mathbb{R}$ such that 

\begin{equation}
\label{internonvide}
]q^{-},q^{+}[\cup ]p,p^{+}[\neq \emptyset,
\end{equation}   

\begin{equation}
\label{inegal}
F(q)\leq H(q)
\end{equation}
and 

\begin{equation}
\label{inegalcont}
H(q)<H(p).
\end{equation}
We have that \eqref{inegal}, \eqref{inegalcont} implies $H(p)=F(p)>H(q)\geq F(q)$. So as $F$ is non-increasing, we have $q>p$ and Lemma \ref{disjoint} gives a contradiction with \eqref{internonvide}. We deduce the result.  
\end{proof}

The next lemma shows that the set $A_{F}$ associated to the function $F$ is uniquely determined. 

\begin{Lm}
\label{memeflux}
Let $A_{1}$ and $A_{2}$ be two set limiters. If $$\left\{ u \mbox{ }| \mbox{ } u \mbox{ solution of \eqref{eqHJ} with } F=F_{A_{1}} \right\}= \left\{ u \mbox{ }| \mbox{ } u \mbox{ solution of \eqref{eqHJ} with } F=F_{A_{2}} \right\},$$ then $$A_{1}=A_{2}.$$ 
\end{Lm}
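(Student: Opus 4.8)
The plan is to show that a set limiter $A$ can be recovered from the family of solutions of \eqref{eqHJ} with $F=F_A$, so that $A_1 = A_2$ as soon as these two families coincide. The most economical route is: (1) argue that the family of solutions determines the function $F_{A}$ itself, i.e. if $F_{A_1}$ and $F_{A_2}$ produce the same solution set then $F_{A_1}=F_{A_2}$; and (2) argue that a set limiter is uniquely determined by the flux function it generates, i.e. the map $A\mapsto F_A$ is injective on set limiters. Combining (1) and (2) gives $A_1=A_2$.

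For step (1), I would use explicit test-function arguments at the boundary point $x=0$. Fix $p\in\mathbb{R}$ and consider the affine candidate $u(t,x)=px - H(p)\,t$; it solves \eqref{eqR*} in $(0,T)\times(0,\infty)$ exactly, and one checks directly from the flux-limited definition that $u$ is a solution of \eqref{eqHJ} with $F=F_{A}$ if and only if $F_A(p)\le H(p)\le F_A(p)$ fails appropriately — more precisely, using that $u$ is automatically a subsolution up to the boundary and testing with $\varphi(t,x)=px-H(p)t$ one gets a subsolution iff $F_A(p)\le -\partial_t\varphi = H(p)$ and a supersolution iff $F_A(p)\ge H(p)$. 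Hence $u$ is a solution iff $F_A(p)=H(p)$, i.e. the set $\{p : F_A(p)=H(p)\}$ — equivalently, by Lemma \ref{positionHFA}, the complement $\bigcup_{\alpha}\,]p_\alpha^-,p_\alpha[\,\cup\,]p_\alpha,p_\alpha^+[$ — is read off from the solution family. More generally, testing with $\varphi(t,x)=px+\lambda t$ for varying $\lambda$ and using the linear solutions $u(t,x)=px-\lambda t$ of the full problem on all of $[0,\infty)$ (which requires $H(p)=\lambda$ in the interior and $F_A(p)\le\lambda$, $F_A(p)\ge\lambda$ at the boundary... ) pins down the value $F_{A}(p)$ from whether such a linear function belongs to the solution set; since $F_A$ is continuous this recovers $F_A$ entirely. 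So $F_{A_1}=F_{A_2}$.

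For step (2), suppose $F_{A_1}=F_{A_2}=:G$. The connected components of the open set $\{G\neq H\} = \{p:G(p)<H(p)\text{ or }G(p)>H(p)\}$ are, by Lemma \ref{positionHFA}, exactly the intervals $]p_\alpha^-,p_\alpha[$ and $]p_\alpha,p_\alpha^+[$; but a subtlety is that $]p_\alpha^-,p_\alpha[$ and $]p_\alpha,p_\alpha^+[$ together with the point $p_\alpha$ form a single maximal interval $]p_\alpha^-,p_\alpha^+[$ on which $G$ is constant equal to $H(p_\alpha)$, and the intervals $]p_\alpha^-,p_\alpha^+[$ for distinct $\alpha$ are disjoint by Lemma \ref{disjoint}. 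So the data of $G$ determines the collection $\{\,]p_\alpha^-,p_\alpha^+[\,\}_{\alpha\in I}$ of maximal intervals of constancy of $G$ that are not intervals of constancy of $H$; within each such interval the value $H(p_\alpha)$ is known, and Definition \ref{defpm1} forces $p_\alpha^- = \sup\{q<p_\alpha : H(q)\ge H(p_\alpha)\}$ and $p_\alpha^+=\inf\{q>p_\alpha : H(q)\le H(p_\alpha)\}$, hence $p_\alpha$ is the unique point in $]p_\alpha^-,p_\alpha^+[$ with $(p_\alpha)^- = p_\alpha^-$ and $(p_\alpha)^+ = p_\alpha^+$. Therefore each $p_\alpha$ is determined by $G$, and $A_1=\{p_\alpha\}_{\alpha\in I_1}=\{p_\alpha\}_{\alpha\in I_2}=A_2$ (as unindexed sets of points, which is all that Definition \ref{defA} requires up to reindexing).

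The main obstacle is step (1): one must be careful that the flux-limited notion of solution at $x=0$ is genuinely sensitive to the individual value $F_A(p)$ and not only to some coarser information, and that the probe functions used (affine in $x$, affine in $t$) are legitimate test functions giving the two-sided inequality $F_A(\varphi_x)=-\varphi_t$ exactly when $\varphi$ is a global solution. This is where the coercivity of $H$ and the monotonicity/continuity of $F_A$ (Proposition \ref{FAdec}) are used — continuity lets us recover $F_A$ from its values on a dense set of "good" slopes, and monotonicity ensures no pathology in passing to limits. Once step (1) is in hand, step (2) is purely a matter of unwinding Definitions \ref{defpm1}, \ref{defA} and Lemmas \ref{H-+}, \ref{disjoint}, \ref{positionHFA}.
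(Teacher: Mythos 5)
Your overall strategy --- probing the boundary condition with the affine solutions $u(t,x)=px-H(p)t$ --- is the same device the paper uses, and your observation that such a $u$ is a flux-limited solution for $F=F_{A}$ if and only if $F_{A}(p)=H(p)$ is correct (the monotonicity of $F_{A}$ handles the one-sided test functions at $x=0$). The genuine gap is in the bridge between your steps (1) and (2): these probes only determine the set $S=\{p:\ F_{A}(p)=H(p)\}$. Indeed, for a slope $p$ with $F_{A}(p)\neq H(p)$, no affine function $px-\lambda t$ belongs to the solution set for any $\lambda$ (the interior equation already forces $\lambda=H(p)$, and then the boundary condition fails), so the solution set carries no information at all about the value $F_{A}(p)$ at such $p$. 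Your claim that ``since $F_{A}$ is continuous this recovers $F_{A}$ entirely'' is therefore unjustified: by Lemma \ref{positionHFA} the set $\mathbb{R}\setminus S$ is a union of open intervals, and a continuous function is not determined on an open interval by its values at the endpoints. Consequently your step (2), which reads $A$ off from the maximal intervals of constancy of the full function $G=F_{A}$, is fed an input that step (1) does not produce.

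The hole is repairable in two ways. One can check that $S$ together with $H$ already determines $A$: the connected components of $\mathbb{R}\setminus S$ are exactly the nonempty intervals among $]p_{\alpha}^{-},p_{\alpha}[$ and $]p_{\alpha},p_{\alpha}^{+}[$ (they cannot merge across $p_{\alpha}\in S$, and intervals coming from distinct $\alpha$ cannot coincide by Lemma \ref{disjoint}), and Lemma \ref{H-+} tells you which endpoint of each component is the corresponding $p_{\alpha}$, since $H$ lies strictly below its common endpoint value on $]p_{\alpha}^{-},p_{\alpha}[$ and strictly above it on $]p_{\alpha},p_{\alpha}^{+}[$. Alternatively, the paper avoids reconstructing anything globally: it argues by contradiction, picks $p_{\alpha_{1}}\in A_{1}\setminus A_{2}$, uses property 3 of Definition \ref{defA} to find $p_{\alpha_{2}}\in A_{2}$ with $]p_{\alpha_{1}}^{-},p_{\alpha_{1}}^{+}[\,\cap\,]p_{\alpha_{2}}^{-},p_{\alpha_{2}}^{+}[\neq\emptyset$, and runs the affine probe at a single slope $p$ chosen among the four endpoints so that it lies inside the other open interval; there $F_{A_{1}}(p)=F_{A_{2}}(p)$ forces $H(p_{\alpha_{1}})=H(p_{\alpha_{2}})$, which contradicts Lemma \ref{disjoint}. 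Either repair keeps your architecture but replaces the unjustified ``recover $F_{A}$ everywhere'' step.
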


\begin{proof}
Assume by contradiction that $A_{1}\neq A_{2}$. Then let $p_{\alpha_{1}}\in A_{1}$ such that $p_{\alpha_{1}}\notin A_{2}$. By 3. of Definition \ref{defA}, there exists $p_{\alpha_{2}}\in A_{2}$ such that 
\begin{equation}
\label{disj}
]p_{\alpha_{1}}^{-},p_{\alpha_{1}}^{+}[\cap ]p_{\alpha_{2}}^{-},p_{\alpha_{2}}^{+}[ \neq \emptyset.
\end{equation}
 So we have that $p_{\alpha_{1}}^{-}$ or $p_{\alpha_{1}}^{+}$ is in $]p_{\alpha_{2}}^{-},p_{\alpha_{2}}^{+}[$ or $p_{\alpha_{2}}^{-}$ or $p_{\alpha_{2}}^{+}$ is in $]p_{\alpha_{1}}^{-},p_{\alpha_{1}}^{+}[$ . We choose $p$ one of these elements.
The function $u(t,x)=-H(p)t+px$ is a solution of \eqref{eqHJ} for $F=F_{A_{1}}$ and for $F=F_{A_{2}}$ using the hypothesis. So we deduce that 
$$F_{A_{2}}(p)=H(p_{\alpha_{2}})=F_{A_{1}}(p)=H(p_{\alpha_{1}}).$$ Necessarily, as $p_{\alpha_{1}}\neq p_{\alpha_{2}}$, Lemma \ref{disjoint} gives a contradiction with \eqref{disj}. We deduce that $A_{1}=A_{2}$.
\end{proof}

%We show a lemma which proves that the fact that $p^{-}\neq p$ or $p^{+}\neq p$ for $p\in A_{F}$ characterizes the fact that $p$ satisfies \eqref{AFsous} or \eqref{AFsur}.
%
%\begin{Lm}
%\label{characAF}
%If $p\in A_{F}$ and $p^{-}\neq p$ (resp. $p^{+}\neq p$) then $p$ satisfies \eqref{AFsous} (resp. \eqref{AFsur}).
%In particular, if $p^{-}<p<p^{+}$, then $F(p)=H(p)$.
%\end{Lm}
%
%\begin{proof}[Proof of Lemma \ref{characAF}.]
%We only prove the result for $p^{+}\neq p$ since it is very similar for $p^{-}\neq p$. If $p$ satisfies \eqref{AFsur}, we are done. If $p$ satisfies \eqref{AFsous}, let us prove that it also satisfies \eqref{AFsur} in this case. By hypothesis, it satisfies \eqref{AFsur} (i). Let us prove that it satisfies \eqref{AFsur} (ii). Assume by contradiction that $F(p)>H(p)$. Consider $p_{2}$ defined in step 2 case 2 of the proof of Proposition \ref{AFfluxlim}. Then $p_{2}$ gives a contradiction with \eqref{AFsur} (iii), so $p$ satisfies \eqref{AFsous} (ii) and $F(p)=H(p)$. 
%
%Now let us prove that $p$ satisfies \eqref{AFsur} (iii).
%Assume by contradiction that there exists $q\in \mathbb{R}$ such that 
%
%\begin{equation}
%\label{internonvide}
%]q^{-},q^{+}[\cup ]p,p^{+}[\neq \emptyset,
%\end{equation}   
%
%\begin{equation}
%\label{inegal}
%F(q)\leq H(q)
%\end{equation}
%and 
%
%\begin{equation}
%\label{inegalcont}
%H(q)<H(p).
%\end{equation}
%We have that \eqref{inegal}, \eqref{inegalcont} implies $H(p)=F(p)>H(q)\geq F(q)$. So as $F$ is non-increasing, we have $q>p$ and Lemma \ref{disjoint} gives a contradiction with \eqref{internonvide}. We deduce the result.  
%\end{proof}

Now we can deduce the main theorem \ref{thclassification} from the following proposition.

\begin{prop}[General Neumann boundary conditions reduce to flux-limited ones]
\label{propclassification}
Assume that the Hamiltonian $H:\mathbb{R}\rightarrow\mathbb{R}$ is continuous and coercive, the function $F:\mathbb{R}\rightarrow\mathbb{R}$ is continuous, non-increasing. Then there exists a set limiter $A_{F}$ such that 
\begin{itemize}
\item any relaxed super-solution of (\ref{eqHJ}) is an $A_{F}$-flux-limited super-solution;
\item any relaxed sub-solution of (\ref{eqHJ}) such that 
\begin{equation}
\label{ctefaible}
\forall t\in (0,T) u(t,0)=\limsup\limits_{(s,y)\rightarrow (t,0), y>0} u(s,y)
\end{equation}
is a $A_{F}$-flux-limited sub-solution;
\item any $A_{F}$-flux-limited sub-solution (resp. super-solution) is a $F$-relaxed sub-solution (resp. super-solution) of (\ref{eqHJ}). 
\end{itemize}
\end{prop}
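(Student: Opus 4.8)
The plan is to combine the construction of $A_F$ (Definition \ref{defFAF}), its validity as a set limiter (Proposition \ref{AFfluxlim}), and the reduction-of-test-functions result (Theorem \ref{threduc}), treating the three bullets in turn. The first two bullets go ``from relaxed to flux-limited'': here the idea is that the relaxed definition only gives the disjunction (either $H\leq 0$ or $F\leq 0$ at $x_0=0$), and one must upgrade it to the single inequality $\phi_t+F_{A_F}(\phi_x)\leq 0$ (resp. $\geq 0$). The route is to verify that a relaxed super-solution is in particular a super-solution of \eqref{eqR*} in $(0,T)\times(0,+\infty)$, so that Lemma \ref{Tpentescri2} (critical slope for super-solutions) applies; then by Theorem \ref{threduc} ii) it suffices to test against the special functions $\varphi(t,x)=\psi(t)+\phi_\alpha(x)$ with $\phi_\alpha'(0)=p_\alpha$ for $p_\alpha\in A_F$ with $p_\alpha\neq p_\alpha^+$. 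For such a test function the relaxed super-solution property gives either $\psi'(t_0)+H(p_\alpha)\geq 0$ or $\psi'(t_0)+F(p_\alpha)\geq 0$; since (by the last assertion of Proposition \ref{AFfluxlim}) $p_\alpha\neq p_\alpha^+$ forces $p_\alpha$ to satisfy \eqref{AFsur}, in particular $F(p_\alpha)\leq H(p_\alpha)=F_{A_F}(p_\alpha)$, so in either case $\psi'(t_0)+F_{A_F}(p_\alpha)\geq 0$, which is exactly what Theorem \ref{threduc} ii) needs. The sub-solution bullet is symmetric, using Lemma \ref{Tpentescri} instead, the ``weak continuity'' hypothesis \eqref{ctefaible} to invoke Theorem \ref{threduc} i), points $p_\alpha\in A_F$ with $p_\alpha^-\neq p_\alpha$ satisfying \eqref{AFsous}, and $F(p_\alpha)\geq H(p_\alpha)=F_{A_F}(p_\alpha)$.

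For the third bullet, ``from flux-limited to relaxed,'' the inclusion is almost tautological at interior points $x_0>0$ (both definitions impose $\phi_t+H(\phi_x)\leq 0$ there), so the content is at $x_0=0$. A flux-limited sub-solution satisfies $\phi_t+F_{A_F}(\phi_x)\leq 0$ at $(t_0,0)$; to conclude it is a relaxed sub-solution I must produce one of the two relaxed inequalities, and since $F=F$ in the relaxed equation at $x=0$ is the \emph{original} $F$, I need $\phi_t(t_0,0)+F(\phi_x(t_0,0))\leq 0$ OR $\phi_t(t_0,0)+H(\phi_x(t_0,0))\leq 0$. Write $p=\phi_x(t_0,0)$. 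If $F_{A_F}(p)=H(p)$ we get the $H$-inequality for free from $\phi_t+F_{A_F}(p)\leq0$; otherwise $p$ lies in some $]p_\alpha^-,p_\alpha[$ or $]p_\alpha,p_\alpha^+[$ with $p_\alpha\in A_F$, and I use Lemma \ref{positionHFA}: on $]p_\alpha,p_\alpha^+[$ one has $F_{A_F}(p)<H(p)$, so $\phi_t+H(p)>\phi_t+F_{A_F}(p)$, which does not immediately help — so the key point is that on $]p_\alpha^-,p_\alpha[$ one has $F_{A_F}(p)=H(p_\alpha)>H(p)$, hence $\phi_t+H(p)<0$ and we are done on that branch, while on the branch $]p_\alpha,p_\alpha^+[$ one must instead show $F(p)\leq F_{A_F}(p)$, which follows because $p_\alpha$ satisfies \eqref{AFsur} so $F(p_\alpha)\le H(p_\alpha)$ and $F$ is non-increasing with $p>p_\alpha$, giving $F(p)\le F(p_\alpha)\le H(p_\alpha)=F_{A_F}(p)$, hence $\phi_t+F(p)\le \phi_t+F_{A_F}(p)\le 0$. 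The super-solution case is symmetric, swapping the roles of the two branches and using \eqref{AFsous} together with monotonicity of $F$.

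Finally, the uniqueness of $A_F$ (needed to pass from this proposition to Theorem \ref{thclassification}) is not part of this proposition but follows by combining the present three bullets with Lemma \ref{memeflux}: the present proposition shows the relaxed solution set for $F$ coincides with the flux-limited solution set for $F_{A_F}$, and if two set limiters produced the same solution set they must be equal. The main obstacle I anticipate is the bookkeeping in the third bullet: one has to be careful that in the relaxed equation at $x=0$ the boundary nonlinearity is the \emph{given} $F$, not $F_{A_F}$, so the argument genuinely needs the defining inequalities \eqref{AFsous}(ii)/\eqref{AFsur}(ii) of $A_F$ together with monotonicity of $F$ to compare $F$ and $F_{A_F}$ on the intervals $]p_\alpha^-,p_\alpha[$ and $]p_\alpha,p_\alpha^+[$; all other steps are routine once Theorem \ref{threduc} and Proposition \ref{AFfluxlim} are in hand.
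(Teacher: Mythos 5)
Your proposal is correct and follows essentially the same route as the paper: the first two bullets are handled via Theorem \ref{threduc} together with the last assertion of Proposition \ref{AFfluxlim} (which forces $F(p_\alpha)\geq H(p_\alpha)=F_{A_F}(p_\alpha)$ when $p_\alpha^-\neq p_\alpha$, and the reverse inequality when $p_\alpha^+\neq p_\alpha$), and the third bullet amounts to the inequality $\min(F,H)\leq F_{A_F}\leq\max(F,H)$, which the paper proves by exactly the case analysis on $[p_\alpha^-,p_\alpha]$, $[p_\alpha,p_\alpha^+]$ and the complement that you carry out. The only cosmetic difference is that you cite the critical-slope lemmas directly, whereas they are already absorbed into Theorem \ref{threduc}.
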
  

\begin{proof}[Proof of Theorem \ref{propclassification}]
We first prove that relaxed sub-solutions satisfying \eqref{continuitefaible} are flux-limited sub-solutions. We only do the proof for sub-solutions since it is very similar for super-solutions. Let $u$ be a relaxed sub-solution. 
Thanks to Theorem \ref{threduc}, it is enough to show that for all $\varphi$ touching $u^{*}$ from above at $(t,0)$ such that $\varphi_{x}(t,0)=p\in A_{F},$ and $p^{-}\neq p$, we have $$\varphi_{t}(t,0)+H(p)\leq 0.$$
Let $\varphi$ be such a test function. As $u$ is a relaxed sub-solution, we have $$\varphi_{t}+\min (F(p),H(p))\leq 0.$$ As $p^{-}\neq p $, Proposition \ref{AFfluxlim} implies $F(p)\geq H(p)$ so we deduce the result.

The second point of the theorem is a direct consequence of the inequality $$\min(F,H)\leq F_{A_{F}} \leq \max(F,H).$$
Indeed, if $p\in [p_{\alpha}^{-}, p_{\alpha}]$ where $p_{\alpha}\in A_{F}$ and $p_{\alpha}^{-}\neq p_{\alpha}$, using Proposition \ref{AFfluxlim}, and \eqref{AFsous} (ii), we have 
$$ F(p) \geq F(p_{\alpha})\geq H(p_{\alpha})=F_{A_{F}}(p)\geq H(p).$$
If $p\in [p_{\alpha}, p_{\alpha}^{+}]$ where $p_{\alpha}\in A_{F}$ and $p_{\alpha}^{+}\neq p_{\alpha}$, using Proposition \ref{AFfluxlim}, and \eqref{AFsur} (ii), we have 
$$ F(p) \leq F(p_{\alpha})\leq H(p_{\alpha})=F_{A_{F}}(p)\leq H(p).$$
If $p\notin \bigcup\limits_{\alpha \in I} [p_{\alpha}^{-},p_{\alpha}^{+}]$, then $H(p)=F_{A_{F}}(p)$.
\end{proof}

\begin{proof}[Proof of Theorem \ref{thclassification}]
Apply Proposition \ref{propclassification} and Lemma \ref{memeflux}.
\end{proof}

%
%\begin{Rmk}
%One can check that the flux limiter $A_{F_{A}}$ associated to the limited-flux function $F_{A}$ is the set $A$. So a relaxed sub-solution (resp. super-solution) of \eqref{eqHJ} for $F=F_{A}$ is a flux-limited sub-solution (resp. super-solution) for $F=F_{A}$.  
%\end{Rmk}

\begin{Lm}
\label{AFA=A}
Let $A$  be a set limiter.
The set limiter $A_{F_{A}}$ associated to the limited-flux function $F_{A}$ is the set $A$. In particular, a relaxed sub-solution (resp. super-solution) of \eqref{eqHJ} for $F=F_{A}$ is a flux-limited sub-solution (resp. super-solution) for $F=F_{A}$.  
\end{Lm}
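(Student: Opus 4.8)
The plan is to show the two inclusions $A\subset A_{F_A}$ and $A_{F_A}\subset A$ by exploiting the structural description of $F_A$ given in Lemma \ref{positionHFA}, together with the characterization of $A_{F_A}$ provided by Proposition \ref{AFfluxlim}. Throughout, write $F=F_A$ and let $(p_\alpha)_{\alpha\in I}$ be the indexing of $A$.

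First I would prove $A\subset A_{F_A}$. Fix $\alpha\in I$; by item 1 of Definition \ref{defA} we have $p_\alpha^-\neq p_\alpha^+$, so at least one of $p_\alpha^-\neq p_\alpha$ or $p_\alpha\neq p_\alpha^+$ holds — say $p_\alpha^-\neq p_\alpha$ (the other case is symmetric, swapping \eqref{AFsous} and \eqref{AFsur}). I claim $p_\alpha$ satisfies \eqref{AFsous} with the flux $F=F_A$. Condition (i) is immediate. For (ii), $F_A(p_\alpha)=H(p_\alpha)$ by Definition \ref{funcFA}, so in fact $F_A(p_\alpha)=H(p_\alpha)$, giving (ii). For (iii), suppose $q\in\mathbb{R}$ satisfies $F_A(q)\geq H(q)$ and $]q^-,q^+[\cap]p_\alpha^-,p_\alpha[\neq\emptyset$; I must show $H(q)\leq H(p_\alpha)$. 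The inequality $F_A(q)\geq H(q)$ forces, by item 2 of Lemma \ref{positionHFA} (the only place where $F_A<H$ strictly), that $q\notin\bigcup_\beta ]p_\beta,p_\beta^+[$; so either $q$ lies in some $]p_\beta^-,p_\beta[$ or $F_A(q)=H(q)$. In the first sub-case Lemma \ref{pmoins} applied to the overlap of $]q^-,q^+[\supset ]p_\beta^-,p_\beta[$ with $]p_\alpha^-,p_\alpha[$ yields a comparison between $H(p_\beta)$ and $H(p_\alpha)$, and using item 2 of Definition \ref{defA} (the ordering on $A$) plus $F_A(q)=H(p_\beta)$ one concludes $H(q)\leq H(p_\alpha)$. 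In the second sub-case, $F_A(q)=H(q)$ together with Lemma \ref{pmoins} point 1 or 3 applied to the overlap directly gives $H(q)\leq H(p_\alpha)$. Hence $p_\alpha\in A_{F_A}$.

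Next I would prove $A_{F_A}\subset A$. Let $p\in A_{F_A}$. By item 1 of Definition \ref{defA} for $A_{F_A}$ (established in Proposition \ref{AFfluxlim}), $p^-\neq p^+$; assume $p^-\neq p$, so by Proposition \ref{AFfluxlim} the point $p$ satisfies \eqref{AFsous} with $F=F_A$, in particular $F_A(p)\geq H(p)$. If $p\in]p_\beta,p_\beta^+[$ for some $\beta$, item 2 of Lemma \ref{positionHFA} gives $F_A(p)<H(p)$, a contradiction; and $p\notin\bigcup_\beta]p_\beta^-,p_\beta^+[$ would force $F_A(p)=H(p)$ — this is compatible, but then I claim maximality \eqref{AFsous}(iii) is violated: since $p^-\neq p$, there is a genuine interval $]p^-,p[$, and by item 3 of Definition \ref{defA} for $A$ there is $\alpha\in I$ with $]p^-,p[\cap]p_\alpha^-,p_\alpha^+[\neq\emptyset$; taking $q=p_\alpha$ (which satisfies $F_A(q)=H(q)$) one would need $H(p_\alpha)\leq H(p)$, but Lemma \ref{pmoins} applied to the nonempty overlap, combined with $H(p)=\sup$ of $H$ on the relevant interval failing, produces $H(p_\alpha)>H(p)$ or $H(p)<H(p_\alpha)$ in the cases of Lemma \ref{pmoins} point 3 — a contradiction with \eqref{AFsous}(iii). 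Therefore $p$ lies in some $]p_\alpha^-,p_\alpha]$. If $p\in]p_\alpha^-,p_\alpha[$ strictly, the point $p_\alpha$ would also satisfy \eqref{AFsous} (just shown in the first half) with $H(p_\alpha)>H(p)$ by \eqref{souspmoins}, contradicting \eqref{AFsous}(iii) for $p$ with $q=p_\alpha$; so $p=p_\alpha\in A$.

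The main obstacle is the bookkeeping in the maximality conditions \eqref{AFsous}(iii) and \eqref{AFsur}(iii): one has to carefully track, for a test point $q$ overlapping $]p^-,p[$ or $]p,p^+[$, which of the three mutually exclusive positions of $q$ relative to the intervals $]p_\alpha^-,p_\alpha^+[$ occurs, and in each case pull the right comparison out of Lemma \ref{pmoins} and the ordering axiom 2 of Definition \ref{defA}. Everything else is a direct application of Definition \ref{funcFA} and Lemma \ref{positionHFA}. Finally, the ``In particular'' clause is immediate: by the first half $A_{F_A}=A$, so Proposition \ref{propclassification} applied with $F=F_A$ and the associated set limiter $A_{F_A}=A$ says precisely that relaxed sub/super-solutions of \eqref{eqHJ} with $F=F_A$ are $A$-flux-limited sub/super-solutions (and the converse direction is the third bullet of Proposition \ref{propclassification}).
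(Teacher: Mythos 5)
Your first inclusion $A\subset A_{F_{A}}$ is essentially sound and close to the paper's argument (the paper does it more cleanly: from $F_{A}(p)=H(p)<H(q)\leq F_{A}(q)$ and the monotonicity of $F_{A}$ it gets $q<p$ at once, then places $q$ either in $]p^{-},p[$, contradicting \eqref{souspmoins}, or below $p^{-}$, which forces $q^{+}\leq p^{-}$ and kills the overlap). Your parenthetical claim $]q^{-},q^{+}[\supset]p_{\beta}^{-},p_{\beta}[$ is false in general, but it is not load-bearing.

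The reverse inclusion is where there is a genuine gap. In the case $p\notin\bigcup_{\beta}]p_{\beta}^{-},p_{\beta}^{+}[$ you take the $p_{\alpha}$ furnished by axiom 3 of Definition \ref{defA} and claim that Lemma \ref{pmoins} ``produces $H(p_{\alpha})>H(p)$'', contradicting \eqref{AFsous}(iii). But the inequality goes the other way: since $p\notin]p_{\alpha}^{-},p_{\alpha}^{+}[$ the overlap $]p^{-},p[\cap]p_{\alpha}^{-},p_{\alpha}^{+}[\neq\emptyset$ forces $p\geq p_{\alpha}^{+}$, hence $p_{\alpha}^{+}\in]p^{-},p[$ and $H(p_{\alpha})=H(p_{\alpha}^{+})<H(p)$ by \eqref{souspmoins} (consistently, point 3 of Lemma \ref{pmoins} gives $H(p)>H(p_{\alpha})$). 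So $q=p_{\alpha}$ \emph{satisfies} the requirement $H(q)\leq H(p)$ of \eqref{AFsous}(iii) and yields no contradiction; your argument does not exclude such a $p$ from $A_{F_{A}}$. The paper closes this by a different, global mechanism that you should adopt: having shown $A\subset A_{F_{A}}$ and that $A_{F_{A}}$ is a set limiter (Proposition \ref{AFfluxlim}), if $p\in A_{F_{A}}\setminus A$ then $p$ and $p_{\alpha}$ are two distinct elements of the set limiter $A_{F_{A}}$, so axiom 2 together with Lemma \ref{disjoint} forces $]p^{-},p^{+}[\cap]p_{\alpha}^{-},p_{\alpha}^{+}[=\emptyset$, while axiom 3 of $A$ provides a nonempty intersection — contradiction. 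In other words, a set limiter admits no proper set-limiter extension, which is exactly the paper's one-line ``add/remove an element'' remark; the pointwise verification of \eqref{AFsous}(iii) you attempt cannot replace it in this case.
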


\begin{proof}[Proof of Lemma \ref{AFA=A}]
Let us prove that $A\subset A_{F_{A}}$. Let $p\in A$. Without loss of generality, assume that $p^{-}\neq p$, so $p$ satisfies (i) of Definition \ref{defFAF}. By definition of $F_{A}$, we have $F_{A}(p)=H(p)$, so $p$ satisfies (ii) of Definition \ref{defFAF}.
Let us prove that $p$ satisfies (iii) of Definition \ref{defFAF}.
Assume by contradiction that there exists $q$ such that $F_{A}(q)\geq H(q)$ and 
\begin{equation}
\label{intersecpq}
]q^{-},q^{+}[\cap ]p^{-},p[\neq \emptyset,
\end{equation} 
and
\begin{equation}
\label{posipq}
H(p)<H(q).
\end{equation}
Then we deduce that $$F_{A}(p)=H(p)<H(q)\leq F_{A}(q),$$
so $q<p.$ We distinguish two cases, either $q\in ]p^{-},p[,$ or $q<p^{-}$. The first case is not possible since $q$ satisfies \eqref{posipq} which gives a contradiction with Lemma \ref{H-+}. So we have $q<p^{-}$. But \eqref{posipq} and Lemma \ref{H-+} imply that $q^{+}<p^{-}$, that gives a contradiction with \eqref{intersecpq}. So we have $A\subset A_{F_{A}}$. Using Proposition \ref{AFfluxlim}, $A_{F_{A}}$ is a set limiter.
Notice that if we add (resp. remove) an element to (resp. from) a set limiter, this new set is not a set limiter anymore. So necessarily, $A=A_{F_{A}}$ and we get the result. 
\end{proof}

\subsection{Comparison principle for a coercive Hamiltonian}

Using 1. of Theorem \ref{mainthCPnc} and Proposition \ref{propclassification}, we can deduce a comparison principle for a coercive Hamiltonian, but for $F$ only semi-coercive. 

\begin{proof}[Proof of 2. of Theorem \ref{mainthCPnc}]
We assume here that $F$ is semi-coercive \eqref{propF2}. 
We define $$p=\sup\left\{ q\in \mathbb{R} \mbox{ } | \mbox{ } H(q)=F(q) \right\},$$ and $G:\mathbb{R}\rightarrow \mathbb{R}$ a continuous function such that $G(x)\rightarrow -\infty$ when $x\rightarrow +\infty$, $G$ satisfies $G\leq F$ on $[p,+\infty[$. We define the function $\tilde{F}:\mathbb{R}\rightarrow\mathbb{R}$ such that 
$$ \tilde{F}=\left\{\begin{array}{ll}
F & \mbox{ on } ]-\infty,p] \\
G & \mbox{ on } [p,+\infty[.
\end{array} \right.$$
We have $A_{F}=A_{\tilde{F}}$. Indeed, notice that we have the following equivalences for $F$ and $\tilde{F}$, 
$$H(p)\leq F(p) \iff H(p)\leq \tilde{F}(p)$$
and $$H(p)\geq F(p) \iff H(p)\geq \tilde{F}(p).$$
Since in the definition of $A_{F}$, only the relative position between $F$ and $H$ takes the function $F$ into account, the previous equivalences give the result.
 So we deduce using Proposition \ref{propclassification} that a function $u$ is a relaxed sub-solution (resp. super-solution) for $F$ if and only if $u$ is a $A_{F}$-flux limited sub-solution (resp. super-solution), if and only if $u$ is a relaxed sub-solution (resp. super-solution) for $\tilde{F}$. We deduce the comparison principle for $F$ using the comparison principle for $\tilde{F}$ (1. of Theorem \ref{mainthCPnc}).
\end{proof}

\section{Comparison principle for nonconvex and noncoercive Hamilton-Jacobi equations allowing flat parts}

 In this section, we prove the first main comparison principle 1. of Theorem \ref{mainthCPnc} for a nonconvex and noncoercive Hamiltonian where the boundary condition allows flat parts. The proof follows the idea of coupling time and space in the doubling variable method in \cite{FIM}.
 First, we give a restricted version of the theorem which easily implies the main theorem. Then we prove the theorem for a class of test function which satisfy some properties. Finally, we give an example of such a test function so that the theorem is proven. 
 
\subsection{Simplification of the theorem}

Let us prove a restricted version of 1. of Theorem \ref{mainthCPnc} where the function $F$ satisfies more hypotheses. 

\begin{theo}[Restricted comparison principle] 
\label{mainthRes}
~\\
Assume that the Hamiltonian $H:\mathbb{R}\rightarrow\mathbb{R}$ is continuous, the function $F:\mathbb{R}\rightarrow\mathbb{R}$ is of class $\mathcal{C}^{1}$ and satisfies $F'<0$, $F(0)=0$ and \eqref{propF2}-\eqref{propF}, and the initial datum $u_{0}$ is uniformly continuous.
Then for all (relaxed) sub-solution $u$ and (relaxed) super-solution $v$ of (\ref{eqHJ})-(\ref{IC}) satisfying for some $T>0$ and $C_{T}>0,$
\begin{equation*}
u(t,x)\leq C_{T}(1+x), \quad v(t,x)\geq -C_{T}(1+x), \quad \forall (t,x)\in (0,T)\times [0,+\infty),
\end{equation*}
we have
$$u\leq v \quad \mbox{ in } \quad [0,T)\times [0,+\infty).$$
 
\end{theo}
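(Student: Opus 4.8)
The plan is to run the doubling-of-variables argument, but with the quadratic penalization in time and space replaced by a carefully chosen function $\delta\,\varphi\!\left(\frac{t-s}{\delta},\frac{x-y}{\delta}\right)$ that couples the two variables so that, at a maximum point of
$$
u(t,x)-v(s,y)-\delta\,\varphi\!\left(\tfrac{t-s}{\delta},\tfrac{x-y}{\delta}\right)-\text{(localization terms)},
$$
neither $x$ nor $y$ can be $0$. First I would reduce to finite horizon and bounded domain in the usual way: subtract a term $\eta/(T-t)$ to force strictness in time, and add a term like $\alpha(1+x^2)^{1/2}$ (compatible with the linear growth bounds on $u$ and $v$) to confine the maximum to a compact set in $x$; I would also use the initial condition and uniform continuity of $u_0$ to dispose of the case where the maximizing times go to $0$. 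The heart of the matter is the interior of the quarter-plane versus the boundary. If the maximum is attained with $x_0>0$ and $y_0>0$, the standard viscosity inequalities for the interior equation $u_t+H(u_x)=0$ apply to both $u$ and $v$, the $H$-terms cancel because the ``gradient slots'' $\varphi_x$ and $-\varphi_y$ agree by construction, and we get the contradiction from the strictness terms.

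The key new step is to show that the coupling function $\varphi$ can be chosen so that the boundary alternatives are never the relevant ones. Concretely: suppose the maximum point has $x_0=0$. Then the test function $\psi(s,y):=u(t_0,0)-\delta\varphi\!\left(\frac{t_0-s}{\delta},\frac{-y}{\delta}\right)-\cdots$ touches $v$ from below at $(s_0,y_0)$, and the relaxed super-solution property gives either the interior inequality with $H$ or the boundary inequality with $F$ — but we must also use the sub-solution inequality for $u$ at $(t_0,0)$, which offers $\min(H(\partial_x\varphi(\cdots)),F(\partial_x\varphi(\cdots)))\le\text{stuff}$. The goal is to pick $\varphi$ so that the $x$-slope $p:=\partial_2\varphi(\tau,0)$ that shows up when $x_0=0$ is forced to lie in a region where $F(p)<\text{(the time-derivative term)}$, contradicting $F(p)\le \psi_t$; using $F$ strictly decreasing with $F(0)=0$ (as in the restricted Theorem~\ref{mainthRes}) and the semi-coercivity/coercivity \eqref{propF2}--\eqref{propF}, this pins down the sign of $p$. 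The paper's own example (in the text, $\varphi$ generating the cross term $\frac{(t-s)}{\delta}(x-y)$ for the model case $F(p)=-p$) shows the mechanism: the cross term shifts the effective $x$-slope at $x=0$ away from the value that would be compatible with the boundary condition, so the supremum simply cannot sit on $\{x=0\}$, and symmetrically not on $\{y=0\}$.

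The logical skeleton is therefore: (1) reductions (strictness in $t$, compactness in $x$, handling $t\to0$); (2) let $(t_0,x_0,s_0,y_0)$ be the maximizer and do the standard estimates showing $\delta^{-1}|x_0-y_0|$ and $\delta^{-1}|t_0-s_0|$ stay bounded as the localization parameters go to zero; (3) rule out $x_0=0$ by combining the relaxed super-solution inequalities for $v$ with the sub-solution inequality for $u$ at $(t_0,0)$ and the structural properties of $\varphi$ and $F$; symmetrically rule out $y_0=0$; (4) in the remaining case $x_0,y_0>0$ apply the interior viscosity inequalities, cancel the $H$-terms, and derive a contradiction with the strictness term $\eta/(T-t)^2$; (5) pass to the limit in the parameters and conclude $u\le v$ on $[0,T)\times[0,+\infty)$. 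The main obstacle is clearly step (3) together with the existence of a suitable $\varphi$: one needs a single function coupling time and space that simultaneously (a) keeps the ``gradient slots'' matched so the interior $H$-terms cancel, (b) has the right convexity/growth so the max-point estimates of step (2) go through, and (c) at $\{x=0\}$ and $\{y=0\}$ produces an $x$-slope incompatible with the boundary condition for every admissible $F$. Verifying that the explicit $\varphi$ proposed later in Section~4 has all three properties — in particular property (c) uniformly over the class of $F$ satisfying \eqref{propF2}--\eqref{propF} — is the delicate computational core, and it is precisely why the argument is stated only for the half-line and is hard to extend to junctions with several branches.
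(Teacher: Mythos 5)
Your plan coincides with the paper's own proof: the same penalization $\delta\varphi\!\left(\frac{t-s}{\delta},\frac{x-y}{\delta}\right)$ plus the terms $\frac{\eta}{T-t}+\frac{\eta}{T-s}$ and $\frac{\alpha x^{2}}{2}$, the same disposal of the initial-time case, and the same use of the differential inequalities $\varphi_{t}+F(\varphi_{x})\geq 0$ on $\{x\leq 0\}$ and $\leq 0$ on $\{x\geq 0\}$ to contradict the $F$-alternative of the relaxed inequalities at the boundary, with the construction of $\varphi$ deferred exactly as the paper defers Theorem \ref{functionphi}. The only nuance worth noting is that one does not literally rule out a maximizer with $x_{0}=0$ or $y_{0}=0$: if the $F$-branch is the active one there the contradiction is immediate, and otherwise the $H$-branch holds and merges with the interior case, where the two $H$-terms differ by the perturbation $\alpha x$ and are compared via uniform continuity of $H$ on compacts rather than exact cancellation.
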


\begin{proof}[Proof of 1. of Theorem \ref{mainthCPnc} using Theorem \ref{mainthRes}]
It is enough to assume $F(0)=0$  as in \cite[Lemma 3.1]{im}, by defining
$$u(t,x)=\tilde{u}(t,x)-tF(0) \quad \mbox{ and } \quad v(t,x)=\tilde{v}(t,x)-tF(0)$$ and $\tilde{F}=F-F(0)$, $\tilde{H}=H-F(0)$.
The function $u$ (resp. $v$) is a sub-solution (resp. super-solution) of \eqref{eqHJ} if and only if $\tilde{u}$ (resp. $\tilde{v}$) is a sub-solution (resp. super-solution) of \eqref{eqHJ} replacing $H$ by $\tilde{H}$ and $F$ by $\tilde{F}$.
Let the function $F$ be such that $F(0)=0$ and satisfy the hypothesis of 1. of Theorem \ref{mainthCPnc}, i.e. a continuous and non-increasing function which satisfies \eqref{propF2}-\eqref{propF}. By density, one can approximate $F$ by a sequence $F_{n}$ satisfying 
$$\left\| F_{n}-F \right\|_{\infty} \leq \frac{1}{n} \quad \forall n \in \mathbb{N}^{*},$$
with the hypothesis of Theorem \ref{mainthRes}, i.e. of class $\mathcal{C}^{1}$  and decreasing such that $F'<0$ which satisfies \eqref{propF2}-\eqref{propF}. Let $u$ be a sub-solution of (\ref{eqHJ}) with the function $F$.
Let us define $u_{n}=u(x)-\frac{t}{n}$ which is a sub-solution of (\ref{eqHJ}) with the function $F_{n}$ and $v_{n}=v(x)+\frac{t}{n}$ which is a super-solution of (\ref{eqHJ}) with the function $F_{n}$. Using Theorem \ref{mainthRes}, we deduce
$$u(t,x)-\frac{t}{n}\leq v(t,x)+\frac{t}{n} \quad \forall (t,x)\in [0,T)\times [0,+\infty).$$ Sending $n$ to $+\infty$, we deduce the result.
\end{proof}

\subsection{The coupling time and space test function}

\begin{theo}[Coupling time and space test function]
\label{functionphi}
Assume the function $F:\mathbb{R}\rightarrow\mathbb{R}$ is of class $\mathcal{C}^{1}$ and satisfies $F'<0$, $F(0)=0$ and \eqref{propF2}-\eqref{propF}. Then there exists a function $\varphi:\mathbb{R}^{2}\rightarrow \mathbb{R}$ of class $\mathcal{C}^{1}$ which satisfies the following properties.
\begin{enumerate}
\item (Superlinearity) 
\begin{equation}
\label{superlin}
\lim\limits_{\left| (t,x)\right|\rightarrow +\infty} \frac{\varphi(t,x)}{\left| (t,x)\right|}=+\infty,
\end{equation}
\item (Bounded from below) 
\begin{equation}
\label{bounded}
\forall (t,x)\neq (0,0), \quad \varphi(t,x)>\varphi(0,0)=0.
\end{equation}
\item (Differential inequalities) For all $t\in\mathbb{R},$
\begin{equation}
\label{diffineq}
\left\{\begin{array}{lll}
\varphi_{t}(t,x)+F(\varphi_{x}(t,x))&\geq 0 & \mbox{ if } x\leq 0, \\
\varphi_{t}(t,x)+F(\varphi_{x}(t,x))& \leq 0 & \mbox{ if } x\geq 0.
\end{array}\right.
\end{equation}
\end{enumerate}
\end{theo}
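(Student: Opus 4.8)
\textbf{Proof plan for Theorem \ref{functionphi}.}
The plan is to construct $\varphi$ of the form $\varphi(t,x)=\psi(x-ct)+\text{(correction)}$ is too rigid; instead I would look for $\varphi$ as a sum of a ``diagonal'' term forcing superlinearity and positivity, and a term tuned so that the differential inequalities \eqref{diffineq} hold with the correct sign on each half-plane. Concretely, since $F$ is a $\mathcal{C}^1$ decreasing bijection of $\mathbb{R}$ onto $\mathbb{R}$ (by $F'<0$ together with \eqref{propF2}-\eqref{propF}) with $F(0)=0$, the natural building block is the stationary profile: if $g'=F^{-1}(\cdot)$ in an appropriate sense, then $\varphi_t+F(\varphi_x)=0$ along the relevant directions. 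I would first treat the case $x\ge 0$: seek $\varphi$ so that $\varphi_t(t,x)+F(\varphi_x(t,x))\le 0$, and by symmetry $x\mapsto \varphi(t,-x)$-type reflection handles $x\le 0$, but because the inequality must flip sign across $x=0$ the function cannot be even; rather one wants $\varphi_x$ to be an odd-type function of $x$ so that at $x=0$ one has equality $\varphi_t(t,0)+F(\varphi_x(t,0))=0$, and moving into $x>0$ the quantity decreases while moving into $x<0$ it increases.

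The concrete construction I would carry out: fix a smooth even superlinear convex ``reservoir'' $\Phi(t,x)$ (e.g.\ built from $\sqrt{1+t^2+x^2}$ composed with a superlinear function) providing \eqref{superlin} and strict positivity away from the origin, then add a shear/coupling term $\chi(t,x)$, linear-ish in $x$ with slope depending on $t$, chosen so that on the diagonal region the dominant balance in $\varphi_t+F(\varphi_x)$ has the right sign. The key algebraic point is monotonicity of $F$: if I arrange that $\varphi_x(t,x)$ is nondecreasing in $x$ for each fixed $t$ and $\varphi_x(t,0)$ solves $\varphi_t(t,0)+F(\varphi_x(t,0))=0$, then for $x>0$ we get $\varphi_x(t,x)\ge\varphi_x(t,0)$ hence $F(\varphi_x(t,x))\le F(\varphi_x(t,0))$, and provided $\varphi_t(t,x)\le\varphi_t(t,0)$ (or the combined $t$-dependence is controlled), the inequality $\varphi_t+F(\varphi_x)\le 0$ follows; the reverse holds for $x<0$. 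So the scheme reduces to: (1) pick a smooth curve $x=0$ profile $t\mapsto \varphi(t,0)$, (2) extend in $x$ by integrating a chosen $\varphi_x$ that is increasing in $x$ and matches $F^{-1}(-\varphi_t(t,0))$ at $x=0$, (3) check the cross-terms $\partial_t$ of this extension stay subordinate, (4) add $\Phi$ to upgrade to strict superlinearity and the strict minimum at $(0,0)$ — adding $\Phi$ only helps the $x\ge 0$ inequality and one must then re-symmetrize or use a $\Phi$ whose contribution to $\varphi_t+F(\varphi_x)$ is itself sign-correct on each half-plane, which is where care is needed.

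I expect the main obstacle to be exactly this last point: the three requirements pull in opposite directions. Superlinearity and a strict global minimum at the origin want $\varphi$ to be something like $|(t,x)|^2$ near the origin and superlinear at infinity, which is even in $x$ and hence gives $\varphi_t+F(\varphi_x)$ the \emph{same} sign on both sides of $x=0$ — the wrong behaviour. The coupling term must therefore carry essentially all of the sign-flipping, and it must dominate the ``even'' part's contribution in a neighbourhood of $x=0$ uniformly in $t$, while not destroying superlinearity. Making these two competing estimates compatible — a coupling term strong enough near $x=0$ to control the reservoir's even part, yet still globally superlinear and bounded below — is the delicate quantitative heart of the construction, and is presumably why the paper devotes a separate explicit example to exhibiting such a $\varphi$. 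I would handle it by localizing: use the pure shear/stationary profile on a strip $|x|\le R(t)$ and glue (via a smooth cutoff) to a superlinear even tail outside, checking that in the gluing region $F(\varphi_x)$ is already so negative (for $x>0$) or positive (for $x<0$) that adding the tail's derivative cannot spoil the inequality, using coercivity-type control from \eqref{propF2}-\eqref{propF} on $F$ at $\pm\infty$.
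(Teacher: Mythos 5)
You have correctly identified the skeleton of the construction --- a coupling (``shear'') term whose $x$-slope depends on $t$, equality $\varphi_{t}+F(\varphi_{x})=0$ on the axis $x=0$, monotonicity of $F$ to propagate the two inequalities of \eqref{diffineq} into the half-planes, and an additive superlinear part --- and this is indeed the shape of the paper's test function, which is taken of the form $\varphi(t,x)=f(t)+g(x)+xE(t)$ with $f'=-F\circ E$ and $g'(0)=0$. However, the proposal stops exactly where the work begins, and the step you defer is not a routine verification. Your ``key algebraic point'' rests on the sufficient condition $\varphi_{t}(t,x)\leq\varphi_{t}(t,0)$ for $x>0$; but for any shear term $xE(t)$ with $E$ increasing (which is forced if the sign is to flip across $x=0$ while the profile on the axis is superlinear in $t$) one has $\varphi_{t}(t,x)=\varphi_{t}(t,0)+xE'(t)>\varphi_{t}(t,0)$, so that condition fails, and the excess $xE'(t)$ must instead be absorbed by making $\varphi_{x}$ larger, i.e.\ by requiring $g'(x)\geq\sup_{t}\bigl[(-F)^{-1}(xE'(t)-F(E(t)))-E(t)\bigr]$ for $x\geq 0$ (and the symmetric infimum bound for $x\leq 0$). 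The whole difficulty is to show this supremum over all $t\in\mathbb{R}$ is finite locally in $x$ and vanishes at $x=0$; this is false for a generic increasing $E$ and forces a precise balance between the decay of $E'$ and the growth of $-F(E)$. The paper achieves it by defining $E$ through the ODE \eqref{ODE}, $E'=1/G(-2F(E))$, with $G\geq\max((-F^{-1})',(-2F)^{-1})$ even and monotone on each half-line; this choice is engineered precisely so that $E'(t)\,G(-F(E(t))+RE'(t))\leq 1$ for $|t|$ large, which is what makes $\psi_{1},\psi_{2}$ locally bounded (Proposition \ref{psi12}) and hence makes $g$ exist.

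The same ODE does double duty for superlinearity, which your plan also leaves open: one needs $f(t)-E(t)^{2}/2$ superlinear to beat the cross term $|xE(t)|\leq x^{2}/2+E(t)^{2}/2$, and the condition $G\geq(-2F)^{-1}$ is exactly what gives $2f'/(E^{2})'\geq -F(E)\to+\infty$, hence $E^{2}=o(f)$. Your alternative of gluing a pure shear profile on a strip $|x|\leq R(t)$ to a superlinear even tail does not address either point: in the gluing region the cutoff contributes a $t$-derivative involving $R'(t)$ times the mismatch of the two pieces, and controlling its sign uniformly in $t$ is at least as hard as the global estimate you are trying to avoid; moreover nothing in the proposal establishes the strict global minimum \eqref{bounded} (the paper proves separately that the only critical point of $\varphi$ is the origin, using $g'(0)=0$, $E(0)=0$ and the monotonicity of $E'$ on each half-line). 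So the approach is the right one, but the quantitative core is missing and the proof as proposed would not close.
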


\begin{Rmk}
We first admit this theorem to prove the comparison principle and we show it in the next subsection. The idea of the proof is to replace in the doubling variable method, the usual term $\frac{(t-s)^{2}}{2\delta}+\frac{(x-y)^{2}}{2\epsilon}$ by $\delta\varphi\left(\frac{t-s}{\delta},\frac{x-y}{\delta}\right)$ which prevents the following supremum to be reached at the boundary.
\end{Rmk}

\subsection{Proof of the comparison principle}

Let us recall \cite[Lemma 3.4]{im} as we use it in the proof. The proof of this lemma is exactly the same as in \cite{im} so we skip it. 

\begin{Lm}[A priori control]
\label{aprioricontrol}
Let $T>0$ and let $u$ be a sub-solution and $v$ be a super-solution as in Theorem \ref{mainthRes}. Then there exists a constant $C=C(T)>0$ such that for all $(t,x),(s,y) \in [0,T)\times [0,+\infty)$, we have
\begin{equation*}
u(t,x)\leq v(s,y)+C(1+|x-y|).
\end{equation*}
\end{Lm}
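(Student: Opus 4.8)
The plan is to reproduce the doubling‑of‑variables argument of \cite[Lemma 3.4]{im}. The estimate is soft: it uses only the continuity of $H$, the finiteness of $H$ and $F$ on bounded ranges of slopes, the linear growth bounds on $u$ and $v$, and the uniform continuity of $u_{0}$, and it does \emph{not} invoke the comparison principle. First I would record the preliminaries. A uniformly continuous function has at most linear growth, so there is $C_{0}>0$ with $|u_{0}(x)-u_{0}(y)|\le C_{0}(1+|x-y|)$; combined with $u^{*}(0,\cdot)\le u_{0}\le v_{*}(0,\cdot)$ this already yields the claimed bound on the slice $\{t=s=0\}$ with constant $C_{0}$. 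Working with the semicontinuous envelopes $u^{*}$ (an upper semi-continuous sub-solution) and $v_{*}$ (a lower semi-continuous super-solution), I fix $C_{1}\ge C_{0}$ large (depending also on $C_{T}$, see below), set $M:=\sup_{|p|\le C_{1}}\big(|H(p)|+|F(p)|\big)<+\infty$, $\sigma:=M+1$, and claim that $C:=C_{0}+2\sigma T$ works.

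The core is a contradiction argument. If the inequality failed for this $C$, there would be a point where $u^{*}-v_{*}-C(1+|x-y|)>0$. For parameters $\beta\in(0,1]$, $\gamma>0$, $\eta>0$ consider
\[
\Phi(t,x,s,y)=u^{*}(t,x)-v_{*}(s,y)-C_{0}\big(1+\sqrt{|x-y|^{2}+\eta^{2}}\big)-\sigma t-\sigma s-\beta(x^{2}+y^{2})-\frac{\gamma}{T-t}-\frac{\gamma}{T-s}.
\]
Since $C=C_{0}+2\sigma T$ and $t,s<T$, at the failing point $u^{*}-v_{*}-C_{0}(1+|x-y|)-\sigma t-\sigma s$ is strictly positive, hence $\Phi>0$ there for the parameters small, so $\sup\Phi>0$; the growth bounds $u^{*}(t,x)\le C_{T}(1+x)$, $v_{*}(s,y)\ge -C_{T}(1+y)$ together with the quadratic term make $\Phi\to-\infty$ in $(x,y)$, while the $\gamma$-terms keep $(t,s)$ away from $T$, so $\Phi$ attains its (positive) maximum at some $(\bar t,\bar x,\bar s,\bar y)$. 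The elementary estimate $\beta(\bar x^{2}+\bar y^{2})\le u^{*}(\bar t,\bar x)-v_{*}(\bar s,\bar y)-C_{0}(1+|\bar x-\bar y|)\le 2C_{T}+C_{T}(\bar x+\bar y)$ forces $\beta\bar x,\beta\bar y$ to stay bounded by a constant depending only on $C_{T}$, which fixes the choice of $C_{1}$ so that all the slopes produced below lie in $[-C_{1},C_{1}]$.

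Then I would use the equations. If $\bar t>0$, the $\mathcal{C}^{1}$ function $(t,x)\mapsto\sigma t+C_{0}(1+\sqrt{|x-\bar y|^{2}+\eta^{2}})+\beta x^{2}+\frac{\gamma}{T-t}+\mathrm{const}$ touches $u^{*}$ from above at $(\bar t,\bar x)$ with $t$-slope $\sigma+\gamma/(T-\bar t)^{2}$ and $x$-slope $p\in[-C_{1},C_{1}]$; the sub-solution property — the interior inequality if $\bar x>0$, the relaxed ``either/or'' inequality if $\bar x=0$ — then yields $\sigma\le\sigma+\gamma/(T-\bar t)^{2}\le\max(|H(p)|,|F(p)|)\le M$, contradicting $\sigma=M+1$. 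Hence $\bar t=0$; symmetrically, testing $v_{*}$ from below at $(\bar s,\bar y)$ by $-\sigma s-C_{0}(1+\sqrt{|\bar x-y|^{2}+\eta^{2}})-\beta y^{2}-\frac{\gamma}{T-s}+\mathrm{const}$ and using the super-solution property forces $\bar s=0$. But then, using $u^{*}(0,\bar x)\le u_{0}(\bar x)$, $v_{*}(0,\bar y)\ge u_{0}(\bar y)$ and the linear bound on $u_{0}$, one gets $\max\Phi\le u_{0}(\bar x)-u_{0}(\bar y)-C_{0}(1+|\bar x-\bar y|)\le 0$, contradicting $\max\Phi>0$. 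Therefore $\Phi\le0$ for all admissible parameters; letting them go to $0$ at a fixed point gives $u^{*}(t,x)\le v_{*}(s,y)+C_{0}(1+|x-y|)+\sigma t+\sigma s\le v_{*}(s,y)+C(1+|x-y|)$, which is the assertion. The one place needing genuine care — and the reason the paper simply refers to \cite{im} — is the bookkeeping that keeps the penalized maximum interior in both time variables (handled here by the linear penalizations of slope $>M$) together with the routine verification of the boundary cases $\bar x=0$ and $\bar y=0$ through the relaxed conditions; everything else is standard.
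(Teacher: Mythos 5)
Your argument is correct and is essentially the same doubling-of-variables proof as \cite[Lemma 3.4]{im}, which is the proof the paper invokes without reproducing: a smoothed $|x-y|$ penalization with bounded gradient, a linear-in-time penalization with slope $\sigma$ exceeding the sup of $|H|+|F|$ over the resulting slope range to force the maximum onto $\{t=s=0\}$, and the uniform continuity of $u_0$ to conclude there. The bookkeeping (boundedness of $\beta\bar x,\beta\bar y$ independently of the penalization parameters, the relaxed either/or inequality at $x=0$) is handled correctly.
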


\begin{proof}[Proof of Theorem \ref{mainthRes}]
The proof proceeds in several steps.
\vskip 0.5cm
\textbf{Step 1: Penalization procedure.}
We want to prove that 
$$M=\sup\limits_{(t,x)\in[0,T)\times [0,+\infty)}(u(t,x)-v(t,x))\leq 0.$$
Assume by contradiction that $M>0$. 
Let us define
$$M_{\delta,\alpha}=\sup\limits_{(t,x),(s,y)\in[0,T)\times [0,+\infty)}\left\{ u(t,x)-v(s,y)-\delta\varphi\left(\frac{t-s}{\delta},\frac{x-y}{\delta}\right)-\frac{\eta}{T-t}-\frac{\eta}{T-s}- \frac{\alpha x^{2}}{2} \right\}$$
where $\delta,\eta, \alpha$ are positive constants.
Then for $\alpha, \eta$ small enough, we have $M_{\delta,\alpha}\geq \frac{M}{2}>0.$
Indeed, by definition of the supremum $M$, there exists $(t_{0},x_{0})\in [0,T)\times [0,+\infty)$ such that 
$$u(t_{0},x_{0})-v(t_{0},x_{0})\geq \frac{3M}{4},$$ so 
$$M_{\delta,\alpha}\geq u(t_{0},x_{0})-v(t_{0},x_{0})-\frac{2\eta}{T-t_{0}}-\alpha \frac{x_{0}^{2}}{2}\geq \frac{M}{2},$$
 for $\alpha, \eta$ small enough. We want to show that this supremum is reached. 
For all $x,y,t,s$ such that
\begin{equation}
\label{useforIC}
0< \frac{M}{2}\leq u(t,x)-v(s,y)-\delta\varphi\left(\frac{t-s}{\delta},\frac{x-y}{\delta}\right)-\frac{\eta}{T-t}-\frac{\eta}{T-s}-\alpha \frac{x^{2}}{2},
\end{equation}
 by Lemma \ref{aprioricontrol}, we have  
 \begin{equation}
 \label{bornebis}
0< \frac{M}{2}\leq C_{T}(1+|x-y|)-\delta\varphi\left(\frac{t-s}{\delta},\frac{x-y}{\delta}\right)-\frac{\eta}{T-t}-\frac{\eta}{T-s}-\alpha \frac{x^{2}}{2},
\end{equation}
so we deduce that
\begin{equation}
\label{xyborne}
 \delta\varphi\left(\frac{t-s}{\delta},\frac{x-y}{\delta}\right)\leq C_{T}(1+|x-y|),
\end{equation}
and that 
\begin{equation}
\label{xborne}
\frac{(\alpha x)^{2}}{2}\leq \alpha C_{T}(1+|x-y|)
\end{equation}
By dividing (\ref{xyborne}) by $\left| (t-s,x-y) \right|$, the property (\ref{superlin}) of $\varphi$ implies that $x-y$ and $t-s$ are bounded, independently of $\alpha$, for $x, y, t, s$ satisfying (\ref{useforIC}). So using (\ref{xborne}), $x, y, t, s$ are in a compact set so the supremum $M_{\delta,\alpha}$ is reached at some point $(t,x,s,y)=(t_{\delta},x_{\delta},s_{\delta},y_{\delta})$.
Moreover, for $\delta\rightarrow 0$, using any converging subsequence and (\ref{xyborne}) dividing by $\left| (t-s,x-y) \right|$, using the property (\ref{superlin}) and (\ref{bounded}), we deduce that, $t_{\delta}-s_{\delta}$ and $x_{\delta}-y_{\delta}$ go to $0$. 

\vskip 0.5cm
\textbf{Step 2: Use of the initial condition.} We first treat the case where $t_{\delta}=0$ or $s_{\delta}=0$ along a subsequence.
If there exists a subsequence of $(t_{\delta},s_{\delta})$ converging to $(0,0)$ when $\delta\rightarrow 0,$ then calling $(x_{0},x_{0})$ any limit of subsequences of $(x_{\delta},y_{\delta})$, we get from (\ref{useforIC}),
$$0< \frac{M}{2}\leq u(t_{\delta},x_{\delta})-v(s_{\delta},y_{\delta}).$$
So letting $\delta\rightarrow 0$, the limit superior of the right hand side is smaller than $u_{0}(x_{0})-u_{0}(x_{0})=0$ and we get a contradiction. %So we deduce that for all $\delta<\delta_{0}$, $t_{\delta}>0$ and $s_{\delta}>0$.

\vskip 0.5cm
\textbf{Step 3: Use of viscosity inequalities.} We can now assume that $t_{\delta}>0$ and $s_{\delta}>0$ and write the viscosity inequalities at $(t,x,s,y)=(t_{\delta},x_{\delta},s_{\delta},y_{\delta})$. 

\vskip 0.25cm
\textbf{Case 1:} If $x=0$ and $\min(H,F)=F$ at $\varphi_{x}\left(\frac{t-s}{\delta},\frac{-y}{\delta}\right).$

\vskip 0.25cm
The inequality for the sub-solution is
$$\frac{\eta}{(T-t)^2}+\varphi_{t}\left(\frac{t-s}{\delta},\frac{-y}{\delta}\right)+F\left(\varphi_{x}\left(\frac{t-s}{\delta},\frac{-y}{\delta}\right)\right)\leq 0.$$
Using property (\ref{diffineq}), we get a positive left-hand side which gives a contradiction.

\vskip 0.25cm
\textbf{Case 2:} If $y=0$ and $\max(H,F)=F$ at $\varphi_{x}\left(\frac{t-s}{\delta},\frac{x}{\delta}\right)$.

\vskip 0.25cm
The inequality for the super-solution is
$$-\frac{\eta}{(T-s)^2}+\varphi_{t}\left(\frac{t-s}{\delta},\frac{x}{\delta}\right)+F\left(\varphi_{x}\left(\frac{t-s}{\delta},\frac{x}{\delta}\right)\right)\geq 0.$$
Using property (\ref{diffineq}), we get a negatif left-hand side which gives a contradiction.

\vskip 0.25cm
\textbf{Case 3:} Other cases

\vskip 0.25cm
The inequality for the sub-solution is
$$\frac{\eta}{(T-t)^2}+\varphi_{t}\left(\frac{t-s}{\delta},\frac{x-y}{\delta}\right)+H\left(\varphi_{x}\left(\frac{t-s}{\delta},\frac{x-y}{\delta}\right)+\alpha x\right)\leq 0,$$
and the inequality for the super-solution is
$$-\frac{\eta}{(T-s)^2}+\varphi_{t}\left(\frac{t-s}{\delta},\frac{x-y}{\delta}\right)+H\left(\varphi_{x}\left(\frac{t-s}{\delta},\frac{x-y}{\delta}\right)\right)\geq 0.$$
Substracting these inequalities, we get
\begin{equation}
\label{compatible}
\frac{2\eta}{T^{2}} \leq H\left(\varphi_{x}\left(\frac{t-s}{\delta},\frac{x-y}{\delta}\right)\right)-H\left(\varphi_{x}\left(\frac{t-s}{\delta},\frac{x-y}{\delta}\right)+\alpha x\right).
\end{equation}
As $t-s$ and $x-y$ are bounded independently of $\alpha$ and as $\alpha x$ goes to $0$ when $\alpha\rightarrow 0,$ thanks to \eqref{xborne}, using the fact that $H$ is uniformly continuous in a compact, the right hand side of (\ref{compatible}) goes to $0$ when $\alpha\rightarrow 0,$ we get a contradiction. The proof is now complete. 

\end{proof}

\subsection{Construction of the test function}

The idea is to construct a test function coupling time and space, of the form
$$\varphi(t,x)=f(t)+g(x)+xE(t),$$
where the functions $f, g, E:\mathbb{R}\rightarrow\mathbb{R}$ are of class $\mathcal{C}^{1}$. In this section, the function $F$ satisfies the hypothesis of Theorem \ref{mainthRes}. Let us first define a function $G$, we will next use it to define the function $E$.

\begin{defin}[Function $G$]
Let $G$ be a continuous function such that
\begin{itemize}
\item $G \geq \max((-F^{-1})',(-2F)^{-1})>0,$
\item $G$ is even i.e. $\forall t\in \mathbb{R}, \quad G(-t)=G(t)$,
\item $G$ is non-increasing in $(-\infty,0]$ and non-decreasing on $[0,+\infty).$
\end{itemize}
\end{defin}

\begin{Rmk}
The function $G$ exists as $\max((-F^{-1})',(-2F)^{-1})$ is continuous and $(-F^{-1})'$ is positive. Moreover, we have
$$\lim\limits_{x\rightarrow \pm \infty} G(x)= +\infty,$$
since $(-2F)^{-1}$ is increasing and goes to $+\infty$ at $+\infty$.
\end{Rmk}

\begin{prop}[Function $E$]
\label{functionE}
Assume $F$ is of class $\mathcal{C}^{1}$ and satisfies $F'<0$, $F(0)=0$ and \eqref{propF2}-\eqref{propF}. Then there exists a function $E$ of class $\mathcal{C}^{1}$ solution of the ODE
\begin{equation}
\label{ODE}
\left\{ \begin{array}{l}
E'=\frac{1}{G(-2F(E))} \\
E(0)=0,
\end{array}\right.
\end{equation}
 which satisfies the same properties as $-F$, i.e., $E'>0$,  $E(0)=0$ and 
 \begin{equation}
 \label{propE}
\lim_{x\rightarrow -\infty}E(x)=-\infty \quad \mbox{ and } \quad \lim_{x\rightarrow +\infty}E(x)=+\infty.
\end{equation}
Moreover, we have
 \begin{equation}
\label{limitE'}
\lim_{x\rightarrow \pm \infty}E'(x)=0.
\end{equation}
\end{prop}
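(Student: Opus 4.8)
The plan is to treat \eqref{ODE} as a scalar autonomous (well, non-autonomous only through $E$ itself) ODE and produce a global $C^1$ solution, then read off the qualitative properties from the structure of the right-hand side. First I would observe that the map $E \mapsto 1/G(-2F(E))$ is continuous and strictly positive on all of $\mathbb{R}$: indeed $F$ is $C^1$, hence continuous, $-2F(E)$ is a continuous function of $E$, and $G$ is continuous and everywhere $\geq \max((-F^{-1})',(-2F)^{-1}) > 0$, so $G(-2F(E)) \in (0,+\infty)$ and its reciprocal is continuous and positive. By the Cauchy–Peano theorem there is a local $C^1$ solution with $E(0)=0$; uniqueness is not needed, but one may invoke Cauchy–Lipschitz if $G$ is taken locally Lipschitz. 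Since $E' = 1/G(-2F(E)) > 0$ everywhere a solution exists, $E$ is strictly increasing, so $E'>0$ and $E(0)=0$ hold by construction.

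Next I would establish that the solution extends to all of $\mathbb{R}$ and verify \eqref{propE}. The key bound is that $G(-2F(E)) \geq (-2F)^{-1}(-2F(E))$ is not quite the right inequality to use directly; instead note $G \geq (-2F)^{-1}$ means $G(s) \geq (-2F)^{-1}(s)$ for the relevant arguments $s$, and also $G \geq (-F^{-1})'$. To get an a priori bound preventing blow-up in finite "time" $x$, I would argue that $E'(x) = 1/G(-2F(E(x)))$ is bounded on any bounded $E$-range, so $E$ cannot blow up in finite $x$; hence the maximal interval of existence is all of $\mathbb{R}$. For the limits: as $x \to +\infty$, $E$ is increasing so it tends to some limit $\ell \in (0,+\infty]$. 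If $\ell < +\infty$ then $E' \to 1/G(-2F(\ell)) > 0$, forcing $E(x) \to +\infty$, a contradiction; hence $\ell = +\infty$. The argument at $-\infty$ is symmetric. This gives \eqref{propE}.

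Finally, for \eqref{limitE'}, as $x \to +\infty$ we have $E(x) \to +\infty$, so by \eqref{propF}, $F(E(x)) \to -\infty$, hence $-2F(E(x)) \to +\infty$, and by the Remark following the definition of $G$, $G(-2F(E(x))) \to +\infty$; therefore $E'(x) = 1/G(-2F(E(x))) \to 0$. As $x \to -\infty$, $E(x) \to -\infty$, so by \eqref{propF2}, $F(E(x)) \to +\infty$, hence $-2F(E(x)) \to -\infty$, and since $G(t) \to +\infty$ as $t \to \pm\infty$, again $E'(x) \to 0$. The main obstacle — and it is a mild one — is the global existence argument: one must check carefully that the solution does not escape to $\pm\infty$ in finite $x$, which follows because $E'$ stays bounded away from $\infty$ (as $G$ is bounded below by a positive constant on compacta, $E'$ is bounded on compacta, so linear growth of $|E|$ is the worst case). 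The inequality $G \geq (-F^{-1})'$ is the ingredient that will later be used to compare $E$ with $-F$ in the construction of $\varphi$, so although it is imposed here it plays no role in proving this proposition beyond positivity.
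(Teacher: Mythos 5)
Your proof is correct and follows essentially the same route as the paper: Cauchy--Peano together with a bound on $1/G$ for global existence, positivity of $G$ for $E'>0$, the finite-limit contradiction for \eqref{propE}, and $G(-2F(E))\to+\infty$ for \eqref{limitE'}. The only point worth tightening is the global-existence step: rather than invoking a lower bound for $G$ "on compacta" (which presupposes control of the range of $E$), use the monotonicity of $G$ to get $G\geq G(0)\geq (-F^{-1})'(0)>0$ everywhere, so that $E'$ is globally bounded and finite-time escape is impossible --- which is exactly the paper's argument.
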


\begin{proof}[Proof of Proposition \ref{functionE}] The existence of a solution for \eqref{ODE} is given by Cauchy-Peano-Arzela global existence theorem. 
Indeed, as $0<(-F^{-1})'(0)\leq G$, we have $0<\frac{1}{G}\leq\frac{1}{(-F^{-1})'(0)}$ so the function
$$ \frac{1}{G(-2F)}$$ is bounded and continuous.  
Moreover, as $G \geq (-F^{-1})'>0$, we have $E'>0$.
Let us prove that $E$ satisfies (\ref{propE}) by contradiction. If $E$ has a finite limit then using (\ref{ODE}), $E'$ has a finite limit $L>0$ so 
$$E(t)\sim Lt  $$ and $E$ has an infinite limit which is a contradiction. We deduce (\ref{limitE'}) using (\ref{ODE}).
\end{proof}

Let us define the function $f$.
\begin{defin}[Function $f$]
Let $f$ be the function of class $\mathcal{C}^{1}$ such that $f'(t)=-F(E(t))$ and $f(0)=0$.
\end{defin}

Let us define the function $g$. 
 First, we define some functions $\psi$, $\psi_{1}$ and $\psi_{2},$ 
 \begin{align*}
 \psi(t,x) &=-F^{-1}(xE'(t)-F(E(t))-E(t),\\
\psi_{1}(x) &=\sup_{t\in\mathbb{R}} \psi(t,x),\\
 \psi_{2}(x) &=\inf_{t\in\mathbb{R}} \psi(t,x).
  \end{align*}
 
 \begin{prop}
 \label{psi12}
 The function $\psi_{1}$ is lower semi-continuous and locally bounded in $[0,+\infty)$, continuous at $0$ and satisfies $\psi_{1}(0)=0$.   
  The function $\psi_{2}$ is upper semi-continuous and locally bounded in $(-\infty,0]$, continuous at $0$ and satisfies $\psi_{2}(0)=0$. 
 \end{prop}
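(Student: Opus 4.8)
The plan is to analyze $\psi(t,x) = -F^{-1}\bigl(xE'(t) - F(E(t))\bigr) - E(t)$ as a function of $t$ for fixed $x$, and to establish the claimed regularity and boundedness of its sup and inf over $t$. First I would record the value at $x=0$: since $E(0)=0$ and $F(0)=0$, we have $\psi(t,0) = -F^{-1}\bigl(-F(E(t))\bigr) - E(t) = E(t) - E(t) = 0$ for every $t$ (using that $F$ is a $\mathcal{C}^1$ bijection with $F^{-1}\circ F = \mathrm{id}$ because $F'<0$). Hence $\psi_1(0) = \psi_2(0) = 0$, which gives the stated values at $0$.

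Next I would establish the semi-continuity and local boundedness on the respective half-lines. Since $\psi$ is continuous in $(t,x)$ jointly (as $F$, $F^{-1}$, $E$, $E'$ are all continuous), the function $\psi_1(x) = \sup_t \psi(t,x)$ is automatically lower semi-continuous and $\psi_2(x) = \inf_t \psi(t,x)$ is automatically upper semi-continuous, being a sup (resp. inf) of continuous functions. The real content is local boundedness: I must show that for $x$ in a compact subset of $[0,+\infty)$ the supremum over $t\in\mathbb{R}$ is finite (and symmetrically for $\psi_2$ on $(-\infty,0]$). For this I would examine the behavior of $\psi(t,x)$ as $t\to\pm\infty$. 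The key is that $E(t)\to\pm\infty$ with $E'(t)\to 0$ (Proposition \ref{functionE}), and that $G \ge \max((-F^{-1})',(-2F)^{-1})$ so that $xE'(t) = xG(-2F(E(t)))^{-1}$ is controlled; in particular the argument $xE'(t) - F(E(t))$ of $F^{-1}$ should, for $x\ge 0$ and $t\to+\infty$, be dominated by the $-F(E(t))$ term (which $\to -\infty$ since $E(t)\to+\infty$), so that $-F^{-1}$ of it behaves like $E(t)$ up to a lower-order correction controlled by the defining inequality $G\ge(-F^{-1})'$, making $\psi(t,x)$ bounded above uniformly for $x$ in a compact set. The case $t\to-\infty$ is handled because $xE'(t)\to 0^{+}$ stays bounded while $-F(E(t))\to +\infty$, and a similar estimate applies. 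I expect this asymptotic bookkeeping to be the main obstacle: one must use the specific choice $G \ge (-F^{-1})'$ to bound $-F^{-1}\bigl(xE'(t) - F(E(t))\bigr) - E(t)$, essentially via a mean value / integral estimate $\bigl| -F^{-1}(a + b) - (-F^{-1}(b)) \bigr| \le \sup (-F^{-1})' \cdot |a|$ combined with the fact that $xE'(t) = x/G(-2F(E(t))) \le x/(-F^{-1})'(\cdot)$ cancels the derivative factor.

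Finally I would prove continuity at $0$. Having shown $\psi_1(0)=0$, and using lower semi-continuity, it remains to show $\limsup_{x\to 0^{+}} \psi_1(x) \le 0$, i.e. that $\psi(t,x)$ is close to $0$, uniformly in $t$, for $x$ small. By the estimate above, $|\psi(t,x) - \psi(t,0)| = |\psi(t,x)| \le \sup(-F^{-1})' \cdot |x| E'(t) \le \sup(-F^{-1})' \cdot |x| \cdot \sup_t E'(t)$, and $\sup_t E'(t)$ is finite since $E' = 1/G(-2F(E)) \le 1/(-F^{-1})'(0)$ is bounded (as noted in the proof of Proposition \ref{functionE}). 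Hence $\psi_1(x) \le C|x| \to 0$ and $\psi_2(x) \ge -C|x| \to 0$, giving continuity at $0$ for both functions. The symmetric statement for $\psi_2$ on $(-\infty,0]$ follows by the same arguments with $x\le 0$ and $t\to-\infty$ playing the dominant role.
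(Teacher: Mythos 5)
Your computation that $\psi(t,0)=0$ for every $t$ (hence $\psi_{1}(0)=\psi_{2}(0)=0$), and your observation that $\psi_{1}$ and $\psi_{2}$ are respectively lower and upper semi-continuous as a supremum/infimum of continuous functions, are correct and coincide with the paper's proof. The overall strategy for boundedness --- a mean value/Taylor expansion giving $\psi(t,x)=xE'(t)\,(-F^{-1})'(-F(E(t))+\theta xE'(t))$, followed by a cancellation against $E'=1/G(-2F(E))$ --- is also the paper's. However, the way you carry out the cancellation has a genuine gap. In your final estimate you write $|\psi(t,x)|\le \sup(-F^{-1})'\cdot|x|\,E'(t)\le \sup(-F^{-1})'\cdot|x|\cdot\sup_{t}E'(t)$, treating $\sup(-F^{-1})'$ as a finite constant. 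It is not: the hypotheses only give $F'<0$, so $F'$ may tend to $0$ at infinity (the paper's own example $F(p)=-\mathrm{argsh}(p)$ does this), in which case $(-F^{-1})'$ is unbounded. The two factors cannot be decoupled: neither the global supremum of $(-F^{-1})'$ nor its supremum over the $t$-dependent mean-value interval (which escapes to infinity with $t$) is finite, and the entire point of choosing $G\ge(-F^{-1})'$ together with $E'=1/G(-2F(E))$ is that only the \emph{product} $E'(t)\cdot(-F^{-1})'(\xi_{t})$ is controlled.

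The cancellation you sketch for local boundedness, $xE'(t)=x/G(-2F(E(t)))\le x/(-F^{-1})'(\cdot)$, also goes in the wrong direction: you would then need $(-F^{-1})'(\xi_{t})\le C\,(-F^{-1})'(-2F(E(t)))$, and $(-F^{-1})'$ has no monotonicity. The correct mechanism (the paper's) is to bound $(-F^{-1})'(\xi_{t})\le G(\xi_{t})$ at the intermediate point $\xi_{t}=-F(E(t))+\theta xE'(t)$ and then compare $G(\xi_{t})$ with $G(-2F(E(t)))$ using the monotonicity of $G$ on each half-line: for $t$ large positive and $0\le x\le R$ one has $0\le\xi_{t}\le -F(E(t))+RE'(t)\le -2F(E(t))$ (because $E'\to0$ while $-F(E(t))\to+\infty$), so $G(\xi_{t})\le G(-2F(E(t)))$ and hence $E'(t)G(\xi_{t})\le1$; symmetrically as $t\to-\infty$; and on the remaining compact set of $t$ continuity suffices. (Note also two sign slips in your asymptotics: as $t\to+\infty$ one has $-F(E(t))\to+\infty$, not $-\infty$, and as $t\to-\infty$ it tends to $-\infty$.) With this repair one gets $0\le\psi(t,x)\le x\,h(t)$ with $h$ bounded, which yields both the local boundedness and the estimate $\psi_{1}(x)\le Cx\to0$ giving continuity at $0$; your argument as written does not.
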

 
 \begin{proof}[Proof of Proposition \ref{psi12}]
The function $\psi_{1}$ (resp. $\psi_{2}$) is lower (resp. upper) semi-conti\-nuous because it is a supremum (resp. infimum) of continuous functions.

Let us prove that $\psi_{1}$ and $\psi_{2}$ are locally bounded and continuous at $0$.
By using the Taylor expansion of the function $-F^{-1}$ of class $\mathcal{C}^{1}$, there exists $\theta:\mathbb{R}^{2}\rightarrow[0,1]$ such that 
$$\psi(t,x)=xE'(t)(-F^{-1})'(-F(E(t))+\theta(t,x)xE'(t)).$$

If $0\leq x \leq R$, for $R>0$, as $G\geq (-F^{-1})'>0$, we have
\begin{equation}
\label{ingcont}
 \begin{array}{lll}
0\leq \psi(t,x)& \leq & xE'(t)G(-F(E(t))+\theta(t,x)xE'(t))\\
 &\leq & xE'(t)G(-F(E(t))+RE'(t)).
\end{array}
\end{equation}
Let us prove that the continuous function $h:t\rightarrow E'(t)G(-F(E(t))+RE'(t))$ is bounded in $\mathbb{R}$. Since $h$ is continuous, we only need to prove that $h$ is bounded for $|t|$ big enough.  
Using (\ref{limitE'}), for $t\geq 0$ big enough, we have $RE'(t)\leq 1$ and $-F(E(t))+1\leq -2F(E(t)).$ 
 Using that $G$ is non-decreasing in $[0,+\infty)$, we deduce from (\ref{ODE}) that  
 $$0\leq h(t) \leq E'(t)G(-F(E(t))+1)\leq \frac{G(-F(E(t))+1)}{G(-2F(E(t)))}\leq 1.$$
 By the same argument,  for $t\leq 0$ small enough, we have $RE'(t)\geq -1$ and $-F(E(t))-1\geq -2F(E(t))$. So as $G$ is non-increasing in $(-\infty,0]$, we deduce with (\ref{ODE}) that  
 $$0\leq h(t) \leq E'(t)G(-F(E(t))-1)\leq 1.$$ 
 We deduce from \eqref{ingcont} that $\psi_{1}$ is locally bounded in $[0,+\infty)$ and that $\psi_{1}(0)=0$. By the same arguments, we also deduce that $\psi_{2}$ is locally bounded in $(-\infty,0]$ and that $\psi_{2}(0)=0$.
The proof is now complete.
\end{proof}
 
 \begin{Lm}[Function $g$]
 Let $g$ be a function of class $\mathcal{C}^{1}$ such that $g(0)=0$ and such that $g'$ satisfies $g'(0)=0$ and
 $$g'(x)\geq \max(2x,\psi_{1}(x)) \quad \mbox{ for } x\geq 0,$$ and $$g'(x)\leq \min(2x,\psi_{2}(x)) \quad \mbox{ for } x\leq 0.$$
 \end{Lm}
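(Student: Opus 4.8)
The statement is purely an existence claim, so the plan is to produce $g$ by first constructing a continuous function that will serve as $g'$ and then integrating it. The two prescribed inequalities live on the disjoint half-lines $[0,+\infty)$ and $(-\infty,0]$ and are coupled only through the value at the origin; there they are compatible, since $\max(2\cdot 0,\psi_1(0))=0=\min(2\cdot 0,\psi_2(0))$ by Proposition~\ref{psi12}, which is also the value we want for $g'(0)$. Hence it suffices to exhibit a continuous $G_1\colon[0,+\infty)\to\mathbb{R}$ with $G_1(x)\ge\max(2x,\psi_1(x))$ and $G_1(0)=0$, a continuous $G_2\colon(-\infty,0]\to\mathbb{R}$ with $G_2(x)\le\min(2x,\psi_2(x))$ and $G_2(0)=0$, then glue them and take a primitive.

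For $G_1$: by Proposition~\ref{psi12} the function $x\mapsto\max(2x,\psi_1(x))$ is bounded on every compact subset of $[0,+\infty)$ and tends to $0$ as $x\to0^+$ (using that $\psi_1$ is continuous at $0$ with $\psi_1(0)=0$). Therefore its non-decreasing envelope $m(x):=\sup_{0\le y\le x}\max(2y,\psi_1(y))$ is finite, non-decreasing, non-negative, with $m(0)=0$ and $\lim_{x\to0^+}m(x)=0$. Setting $G_1(x)=\tfrac1x\int_0^{2x}m(s)\,ds$ for $x>0$ and $G_1(0)=0$, one checks that $G_1(x)\ge\tfrac1x\int_x^{2x}m(s)\,ds\ge m(x)\ge\max(2x,\psi_1(x))$, that $G_1$ is continuous on $(0,+\infty)$ (it is $x^{-1}$ times the locally Lipschitz primitive of the bounded monotone function $m$), and that $0\le G_1(x)\le 2\,m(2x)\to0$, so $G_1$ is continuous on all of $[0,+\infty)$ with $G_1(0)=0$. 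A mirror-image construction on $(-\infty,0]$ produces the required continuous minorant $G_2$ with $G_2(0)=0$.

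Finally, define $g'(x)=G_1(x)$ for $x\ge0$ and $g'(x)=G_2(x)$ for $x\le0$; since $G_1(0)=G_2(0)=0$ this is a well-defined continuous function on $\mathbb{R}$ satisfying $g'(0)=0$ and the two inequalities, and $g(x):=\int_0^x g'(s)\,ds$ is then of class $\mathcal{C}^1$ with $g(0)=0$, as required. The only point that needs care is that $\psi_1$ is merely lower semi-continuous, so its pointwise supremum over $t$ need not be continuous and cannot be used directly as $g'$; the averaging formula for $G_1$ is precisely what upgrades the locally bounded monotone envelope $m$ to a genuine continuous majorant while preserving the normalization $G_1(0)=0$. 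Everything else — continuity of the primitives, the glueing at the origin, and the $\mathcal{C}^1$ regularity of $g$ — is routine.
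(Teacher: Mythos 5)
Your construction is correct and rests on exactly the two facts the paper invokes — that $\psi_{1}$, $\psi_{2}$ are locally bounded and continuous at $0$ with value $0$ — so it is essentially the paper's (one-line) argument with the details of the continuous majorant/minorant spelled out via the monotone envelope and the averaging trick. No gaps.
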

 
 \begin{proof}
 The construction of the function $g'$ is a consequence of the fact that $\psi_{1}$ and $\psi_{2}$ are locally bounded and continuous at $0$. 
 \end{proof}
 
 Now, we can prove that the function $\varphi$ defined by $\varphi(t,x)=f(t)+g(x)+xE(t)$ satisfies (\ref{diffineq}).
 
 \begin{prop}
 \label{ineqbis}
 The function $\varphi(t,x)=f(t)+g(x)+xE(t)$ satisfies (\ref{diffineq}).
 \end{prop}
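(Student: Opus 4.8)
The plan is to differentiate $\varphi$ and to recognise that the function $\psi$ was designed precisely so that it makes the inequalities \eqref{diffineq} tight. Since $\varphi(t,x)=f(t)+g(x)+xE(t)$ with $f'=-F\circ E$, we have $\varphi_t(t,x)=-F(E(t))+xE'(t)$ and $\varphi_x(t,x)=g'(x)+E(t)$, hence
\[
\varphi_t(t,x)+F(\varphi_x(t,x))=-F(E(t))+xE'(t)+F\bigl(g'(x)+E(t)\bigr).
\]

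First I would record the identity hidden in the definition of $\psi$. Under the hypotheses of Theorem \ref{mainthRes}, the map $-F$ is a continuous, strictly increasing bijection of $\mathbb{R}$ (by $F'<0$ together with \eqref{propF2}--\eqref{propF}), and $-F^{-1}$ denotes its inverse. From $\psi(t,x)+E(t)=-F^{-1}\bigl(xE'(t)-F(E(t))\bigr)$, applying $-F$ to both sides gives
\[
F\bigl(\psi(t,x)+E(t)\bigr)=F(E(t))-xE'(t).
\]
Substituting this into the previous display yields the clean formula
\[
\varphi_t(t,x)+F(\varphi_x(t,x))=F\bigl(g'(x)+E(t)\bigr)-F\bigl(\psi(t,x)+E(t)\bigr).
\]

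Then it is only a matter of monotonicity. For $x\ge 0$, the construction of $g$ gives $g'(x)\ge\psi_1(x)=\sup_{s\in\mathbb{R}}\psi(s,x)\ge\psi(t,x)$, so $g'(x)+E(t)\ge\psi(t,x)+E(t)$, and since $F$ is decreasing we get $F(g'(x)+E(t))\le F(\psi(t,x)+E(t))$; thus the right-hand side above is $\le 0$, which is the second line of \eqref{diffineq}. For $x\le 0$, symmetrically $g'(x)\le\psi_2(x)=\inf_{s\in\mathbb{R}}\psi(s,x)\le\psi(t,x)$, hence $F(g'(x)+E(t))\ge F(\psi(t,x)+E(t))$ and $\varphi_t+F(\varphi_x)\ge 0$, the first line of \eqref{diffineq}.

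The only points requiring care are bookkeeping ones: one needs $\psi_1(x)$ finite on $[0,+\infty)$ (resp. $\psi_2(x)$ finite on $(-\infty,0]$) for the bounds $g'(x)\ge\psi_1(x)$ to be meaningful and for $g$ to exist — this is exactly Proposition \ref{psi12} — and one must use that $-F^{-1}$ is genuinely the inverse of $-F$ (rather than $-(F^{-1})$), so that applying $-F$ to $\psi(t,x)+E(t)$ is legitimate. There is no analytic difficulty left: the substance of the argument has already been absorbed into the definitions of $E$, $f$, $g$ and $\psi$, and what remains is the elementary chain of inequalities above.
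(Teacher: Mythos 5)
Your proof is correct and follows essentially the same route as the paper: differentiate $\varphi$, use the defining identity of $\psi$ together with the bounds $g'\ge\psi_1$ on $[0,+\infty)$ and $g'\le\psi_2$ on $(-\infty,0]$, and conclude by monotonicity of $F$ (the paper phrases the last step via the increasing inverse of $-F$ rather than applying $-F$ itself, which is the same manipulation). Your side remark that $-F^{-1}$ must be read as the inverse of $-F$ is the correct reading of the paper's notation.
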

 
 \begin{proof}[Proof of Proposition \ref{ineqbis}]
 As the function $g$ satisfies for all $t\in\mathbb{R}$,
  $$g'(x)\geq \psi_{1}(x)\geq \psi(t,x)=(-F^{-1})(xE'(t)-F(E(t))-E(t) \quad \mbox{ for } x\geq 0,$$ and  $$g'(x)\leq \psi_{2}(x)\leq \psi(t,x)=(-F^{-1})(xE'(t)-F(E(t))-E(t) \quad \mbox{ for } x\leq 0,$$
  and as $-F^{-1}$ is increasing, we deduce that 
  $$-F(E(t))+xE'(t)+F(g'(x)+E(t))\leq 0 \quad \mbox{ for } x\geq 0,$$ and $$-F(E(t))+xE'(t)+F(g'(x)+E(t))\geq 0 \quad \mbox{ for } x\leq 0.$$
  These inequalities are exactly  (\ref{diffineq}).
 \end{proof}

Let us prove that the function $\varphi$ satisfies (\ref{superlin}) and (\ref{bounded}).
\begin{prop}
\label{propphisuperlin}
The function $\varphi$ is of class $\mathcal{C}^{1}$ and superlinear \eqref{superlin}.
\end{prop}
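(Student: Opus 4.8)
The plan is to treat the two claims in turn. The $\mathcal{C}^1$ regularity is immediate, since $\varphi(t,x)=f(t)+g(x)+xE(t)$ is built from the $\mathcal{C}^1$ functions $f$, $g$, $E$ by sums and by one product with the polynomial $x$.

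For the superlinearity \eqref{superlin} I would first record elementary consequences of the construction. Integrating $g'(x)\ge 2x$ on $[0,+\infty)$ and $g'(x)\le 2x$ on $(-\infty,0]$ from $g(0)=0$ gives $g(x)\ge x^2$ for every $x$; in particular $g\ge0$ and $g$ is superlinear. Since $f'(t)=-F(E(t))$ with $F$ non-increasing, $F(0)=0$ and $E$ increasing with $E(0)=0$, one has $f'\ge0$ on $[0,+\infty)$ and $f'\le0$ on $(-\infty,0]$, so $f\ge f(0)=0$; and as $E(t)\to\pm\infty$ by \eqref{propE}, the assumptions \eqref{propF2}-\eqref{propF} force $|f'(t)|\to+\infty$, hence $f(t)/|t|\to+\infty$. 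The whole difficulty is thus the cross term $xE(t)$. For $\theta\in(0,1)$ write $x^2+xE(t)\ge(1-\theta)x^2-\tfrac{1}{4\theta}E(t)^2$, so, using $g(x)\ge x^2$,
$$\varphi(t,x)\ \ge\ f(t)+(1-\theta)x^2-\tfrac{1}{4\theta}E(t)^2 .$$
If we can prove that
$$L:=\limsup_{|t|\to+\infty}\frac{E(t)^2}{f(t)}\ <\ 4 ,$$
then, choosing $\theta\in(L/4,1)$, the function $t\mapsto f(t)-\tfrac{1}{4\theta}E(t)^2$ is bounded below and satisfies $\big(f(t)-\tfrac{1}{4\theta}E(t)^2\big)/|t|\to+\infty$; since moreover $x^2$ is nonnegative and superlinear in $x$, one concludes $\varphi(t,x)/|(t,x)|\to+\infty$ by splitting according to whether $|t|$ or $|x|$ is the larger.

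It remains to control $E^2$ by $f$. For $t\to+\infty$ this is easy: $E(t)\ge0$ and $G\ge(-2F)^{-1}$ give $G(-2F(E(t)))\ge(-2F)^{-1}\big(-2F(E(t))\big)=E(t)$, so by \eqref{ODE} $\tfrac{d}{dt}E(t)^2=2E(t)E'(t)\le2$, whence $E(t)^2\le 2t$, and $E(t)^2/f(t)\to0$ because $f(t)/t\to+\infty$. For $t\to-\infty$ I would change variables along the trajectory using $E'=1/G(-2F(E))$: for $t<0$,
$$f(t)=\int_{E(t)}^{0}F(u)\,G\big(-2F(u)\big)\,du ,\qquad E(t)^2=\int_{E(t)}^{0}(-2u)\,du ,$$
both integrands being positive on $(E(t),0)\subset(-\infty,0)$. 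By the monotone l'H\^opital rule for ratios of integrals it then suffices to bound $\dfrac{-2u}{F(u)\,G(-2F(u))}$ as $u\to-\infty$; here both halves of the defining inequality $G\ge\max\big((-F^{-1})',(-2F)^{-1}\big)$ enter. Writing $G(-2F(u))=G(2F(u))$ by evenness, one bounds it below by $(-2F)^{-1}(2F(u))=F^{-1}(-F(u))$ and by $(-F^{-1})'(2F(u))$, and checks — distinguishing according to whether $F$ grows slowly or fast at $-\infty$, i.e. which of the two lower bounds dominates — that the quotient stays below $4$ (in fact below $2$) for $|u|$ large.

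The main obstacle is precisely this last estimate as $t\to-\infty$. In contrast to the $t\to+\infty$ case, neither lower bound on $G$ suffices by itself: a very slow growth of $F$ at $-\infty$ is tamed by the $(-F^{-1})'$ part and a fast growth by the $(-2F)^{-1}$ part, so the argument genuinely needs the form $G\ge\max\big((-F^{-1})',(-2F)^{-1}\big)$ together with an asymptotic case analysis; everything else in the proof is routine bookkeeping.
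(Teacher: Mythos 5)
Your $\mathcal{C}^1$ claim, the inequality $g(x)\ge x^2$, the Young-type splitting of the cross term $xE(t)$, and the reduction of \eqref{superlin} to a bound on $E(t)^2/f(t)$ are all correct and follow the same skeleton as the paper, which compares the derivatives $2f'$ and $(E^2)'$ and shows $E(t)^2/f(t)\to0$ at $\pm\infty$. Your treatment of $t\to+\infty$ is complete and in fact cleaner than the paper's: $G(-2F(E(t)))\ge(-2F)^{-1}(-2F(E(t)))=E(t)$ gives $(E^2)'\le2$, hence $E(t)^2\le 2t$, and $f(t)/t\to+\infty$ finishes that half.

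The gap is exactly where you place it, at $t\to-\infty$, but it is not just omitted bookkeeping: the route you sketch cannot be closed. You propose to bound $G(-2F(u))=G(2F(u))$ from below only by the two quantities evaluated at the \emph{positive} argument, namely $(-2F)^{-1}(2F(u))=F^{-1}(-F(u))$ and $((-F)^{-1})'(2F(u))$. Take the admissible $F$ defined by $F(p)=\log(1-p)$ for $p\le0$ and $F(p)=1-e^{p}$ for $p\ge0$ (this is $\mathcal{C}^1$, $F'<0$, $F(0)=0$, and satisfies \eqref{propF2}--\eqref{propF}). For $u\to-\infty$ one computes $F^{-1}(-F(u))=\log\bigl(1+\log(1+|u|)\bigr)$ and $((-F)^{-1})'(2F(u))=\bigl(1+2\log(1+|u|)\bigr)^{-1}$, so both of your lower bounds on $G(-2F(u))$ are at most of order $\log\log|u|$, while $-2u$ grows linearly and $F(u)=\log(1+|u|)$. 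Your quotient $\frac{-2u}{F(u)\,G(-2F(u))}$ is then only controlled by something of order $|u|/(\log|u|\cdot\log\log|u|)\to+\infty$, and the claimed bound by $4$ is unreachable along this route. What actually saves the construction for such $F$ is the lower bound $G(s)\ge((-F)^{-1})'(s)$ taken \emph{directly} at the negative argument $s=-2F(u)$, which in this example equals $(1+|u|)^2$ and makes the quotient tend to $0$. So the dichotomy at $-\infty$ must be between $((-F)^{-1})'(-2F(u))$ and $(-2F)^{-1}(2F(u))$ (the latter is the one that works for, e.g., $F(p)=-p$), not between the two bounds at $2F(u)$; as written, your case analysis is insufficient and the key estimate remains unproved.
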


\begin{proof}[Proof of Proposition \ref{propphisuperlin}]
By construction, the function $\varphi$ is of class $\mathcal{C}^{1}$. With the definition of $g$ in hand, we deduce that $g(x)\geq x^{2}$. Using that 
$$|xE(t)|\leq \frac{x^{2}}{2}+\frac{E(t)^{2}}{2},$$  we deduce that 
\begin{equation}
\label{inegsuper}
 \begin{array}{lll}
\varphi(t,x) & \geq & f(t)+x^{2}-\frac{E(t)^{2}}{2}-\frac{x^{2}}{2},\\
 & \geq & f(t) -\frac{E(t)^{2}}{2}+\frac{x^{2}}{2}.
\end{array}
\end{equation}
Let us prove that $\frac{E^{2}}{2f}$ goes to $0$ when $|t|\rightarrow +\infty$. We first compare their derivative which are simpler.
We have
\begin{equation}
\label{quotderiv}
\begin{array}{lll}
\frac{2f'(t)}{(E^{2})'(t)}=\frac{-F(E(t))}{E'(t)E(t)} &= &
\frac{-F(E(t))G(-2F(E(t)))}{E(t)},\\
& \geq & \frac{-F(E(t))(-2F)^{-1}(-2F(E(t)))}{E(t)}\\
&\geq & -F(E(t)).
\end{array}
\end{equation}
where the last term goes to $+\infty$ as $t$ goes to $+\infty$.
We have the same result for $t\leq 0$ using the same argument and the fact that $G$ is even,
$$\frac{2f'(t)}{(E^{2})'(t)}\geq F(E(t)),$$
where the last term goes to $+\infty$ as $t$ goes to $-\infty$.
We deduce that
$$\frac{(E^{2})'(t)}{f'(t)}\rightarrow 0 \quad \mbox{ for } t\rightarrow \pm \infty.$$
As $\int_{0}^{t} E^{2'}(s) \, \mathrm{d}s=E^{2}(t)$ diverges when  $t\rightarrow \pm \infty$,
we have $$\frac{\int_{0}^{t} (E^{2})'(s) \, \mathrm{d}s}{\int_{0}^{t} f'(s) \, \mathrm{d}s} \rightarrow 0,$$ so
$$\frac{E(t)^{2}}{f(t)} \rightarrow 0 \quad \mbox{ for } t\rightarrow \pm \infty.$$ 
And as $f$ is superlinear \eqref{superlin}, $t\rightarrow f(t)-\frac{E(t)^{2}}{2}$ is superlinear. 
We deduce, from \eqref{inegsuper} that $\varphi$ satisfies (\ref{superlin}).
\end{proof}

\begin{prop}
\label{propphibounded}
The function $\varphi$ satisfies (\ref{bounded}).
\end{prop}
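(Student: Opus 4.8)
The plan is to prove the sharper statement that $\varphi(t,x)>0$ for every $(t,x)\neq(0,0)$, noting first that $\varphi(0,0)=f(0)+g(0)+0\cdot E(0)=0$ by the definitions of $f,g$. Two reductions are immediate. Restricting to the axes, $\varphi(t,0)=f(t)$ and $\varphi(0,x)=g(x)$: since $f'=-F(E)$ with $E$ strictly increasing through $0$ and $F$ strictly decreasing through $0$, $f'(t)$ has the sign of $t$, so $f>0=f(0)$ off $0$; and from the defining inequalities $g'(x)\gtrless 2x$ one gets $g(x)\geq x^{2}$ with $g'$ again of the sign of $x$, so $g>0=g(0)$ off $0$. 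Moreover $\varphi\in C^{1}$ is superlinear (Proposition \ref{propphisuperlin}), hence coercive, so it attains its infimum; it therefore suffices to prove $\varphi>0$ on each of the four open quadrants of $\mathbb{R}^{2}$, and I split the argument accordingly.

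On $\{t\geq0,\,x\geq0\}$ and on $\{t\leq0,\,x\leq0\}$ the argument is immediate: there $\varphi_{t}(t,x)=-F(E(t))+xE'(t)$ has a constant sign (respectively $\geq0$ and $\leq0$, because $-F(E(t))$ has the sign of $t$ and $xE'(t)$ that of $x$), so $\varphi(t,x)\geq\varphi(0,x)=g(x)\geq x^{2}$, which together with $\varphi(t,0)=f(t)>0$ for $t\neq0$ gives $\varphi>0$ off the origin on these two quadrants.

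The remaining work is the two mixed quadrants; I treat $\{t\leq0,\,x\geq0\}$, the case $\{t\geq0,\,x\leq0\}$ being symmetric. Fix $x\geq0$. The map $t\mapsto\varphi(t,x)$ is $C^{1}$, coercive, and satisfies $\varphi_{t}(t,x)=-F(E(t))+xE'(t)\geq0$ for all $t\geq0$ (both terms being $\geq0$), so every critical point in $t$ lies in $(-\infty,0]$ and the global minimum in $t$ is attained at some $t_{\ast}\leq0$ with $xE'(t_{\ast})=F(E(t_{\ast}))$; when $x>0$, $\varphi_{t}(0,x)=xE'(0)>0$ forces $t_{\ast}<0$. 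Hence it is enough to show $\varphi(t_{\ast},x)>0$, i.e. to bound $\varphi=f+g+xE$ from below along the curve $t=t_{\ast}<0$, $x=F(E(t_{\ast}))/E'(t_{\ast})$. Writing $g(x)+xE(t_{\ast})=\int_{0}^{x}\bigl(g'(s)+E(t_{\ast})\bigr)\,ds$ and inserting the lower bounds on $g'$ coming from its construction — $g'(s)\geq\max\bigl(2s,\psi_{1}(s)\bigr)\geq\max\bigl(2s,\psi(t_{\ast},s)\bigr)$, with $\psi(t_{\ast},s)+E(t_{\ast})=(-F)^{-1}\bigl(sE'(t_{\ast})-F(E(t_{\ast}))\bigr)$, where $(-F)^{-1}$ is the increasing inverse of $-F$ (the map written $-F^{-1}$ above) — together with the relation $xE'(t_{\ast})=F(E(t_{\ast}))$ and the change of variable $\sigma=sE'(t_{\ast})-F(E(t_{\ast}))$, one reduces $\varphi(t_{\ast},x)>0$ to a one–variable inequality of the shape $f(t)E'(t)\geq\Theta\bigl(-F(E(t))\bigr)$ for $t<0$ (equality at $t=0$), where $\Theta(u)=\int_{0}^{u}(-F)^{-1}$, with the quadratic part $2s$ of the bound on $g'$ furnishing the strictness away from $t=0$.

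The main obstacle is exactly this last scalar comparison. It is delicate near $t=0$, where both sides and all their lower–order terms vanish, so the estimate is genuinely of second order there; this is precisely where the two defining bounds on $G$ are used: $G\geq(-F^{-1})'$ pins down $E'(0)$ against $-F'(0)$ and makes the inequality hold to second order at the origin, while $G\geq(-2F)^{-1}$ (equivalently $G(-2F(\xi))\geq\xi$) controls the growth of $\Theta\bigl(-F(E(t))\bigr)$ relative to $f(t)E'(t)$ for $t$ bounded away from $0$; the evenness and monotonicity of $G$ let one handle $t<0$ and $t>0$ in the same way, using the ODE \eqref{ODE} throughout. Combining this verification with the first three paragraphs gives $\varphi>0$ on $\mathbb{R}^{2}\setminus\{(0,0)\}$, which is \eqref{bounded}.
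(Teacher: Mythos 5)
There is a genuine gap, and you name it yourself: the whole proposition is reduced, on the mixed quadrants, to the scalar comparison $f(t)E'(t)\geq\Theta\bigl(-F(E(t))\bigr)$ for $t<0$, and that comparison is never proved. The final paragraph only describes, heuristically, which hypothesis on $G$ ``should'' control which regime (second order near $t=0$, growth for large $|t|$); no inequality is actually established, and it is exactly here that all the difficulty of the construction is concentrated. As written, the argument is complete only on the two quadrants where $t$ and $x$ have the same sign and on the axes; the mixed quadrants, which are the only nontrivial case, are left open. I would also flag that the reduction itself throws away information: by freezing $x$ and minimizing in $t$ alone, you only retain the relation $xE'(t_{*})=F(E(t_{*}))$ and discard the second relation $g'(x)+E(t_{*})=0$ that holds at a joint critical point. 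That loss is what forces you into a quantitative integral estimate on $g(x)+xE(t_{*})$ using only the single slice $\psi(t_{*},\cdot)$ of the supremum defining $\psi_{1}$, which is the weakest available bound.

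The paper's proof avoids this entirely. Since $\varphi$ is $\mathcal{C}^{1}$ and superlinear, its global minimum is attained at a critical point of \emph{both} variables, so one may assume $-F(E(t))+xE'(t)=0$ \emph{and} $g'(x)+E(t)=0$ simultaneously. For $x>0$ the second equation gives $-E(t)=g'(x)\geq\psi_{1}(x)\geq\psi(0,x)=(-F^{-1})(xE'(0))$, whence $F(E(t))\geq xE'(0)$, i.e. $xE'(t)\geq xE'(0)$, i.e. $E'(t)\geq E'(0)$; the monotonicity of $E'$ on each half-line then forces $t=0$ and hence $x=0$, a contradiction. So the only critical point is the origin and \eqref{bounded} follows with no integration and no asymptotic analysis. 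If you want to salvage your slice-by-slice structure, the clean way is an envelope computation: with $m(x)=\min_{t}\varphi(t,x)$ one has $m'(x)=g'(x)+E(t_{*}(x))$, and the bound $g'(x)\geq\psi_{1}(x)\geq\psi(t_{*}(x),x)=-E(t_{*}(x))$ gives $m'\geq0$ --- but making this strict and rigorous again amounts to the paper's two-equation argument, so you should adopt that route rather than attempt the unverified scalar inequality.
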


\begin{proof}[Proof of Proposition \ref{propphibounded}]
The function $\varphi$ is of class $\mathcal{C}^{1}$, satisfies $\varphi(0,0)=0$ and is superlinear \eqref{superlin} in $(t,x)$. 
Let us prove that its local extremum is reached only at the point $(0,0)$ and this implies (\ref{bounded}).
Let $(t,x)\in \mathbb{R}^{2}$ satisfy, 
\begin{equation}
\label{ptcrit}
\left\{\begin{array}{lllll}
\varphi_{t}(t,x) & = &-F(E(t))+xE'(t) & = & 0\\
\varphi_{x}(t,x) & = & g'(x)+E(t) & = & 0. 
\end{array}\right.
\end{equation}
First, we notice that for $(t,x)$ satisfying \eqref{ptcrit}, $t=0$ if and only if $x=0$.
Let us prove that $t=0$ as soon as $x>0$ and $(t,x)$ satisfies \eqref{ptcrit}.
 If $x> 0$, we have taking $s=0$
 $$\begin{array}{lll}
 -E(t)= g'(x) &\geq & \sup\limits_{s\in \mathbb{R}}\left\{(-F^{-1}(xE'(s)-F(E(s)))-E(s)\right\}\\
  & \geq & -F^{-1}(xE'(0)),
\end{array}$$
so we have 
$$E(t)\leq F^{-1}(xE'(0)).$$
And we also have, as $F$ is decreasing,
$$xE'(t)=F(E(t))\geq F(F^{-1}(xE'(0)))=xE'(0).$$
If $t\geq 0$, as $E'$ is non-increasing in $[0,+\infty)$, we deduce that $t\leq0$ so $t=0$ and $x=0$, which gives a contradiction. 
If $t\leq 0$, as $E'$ is non-decreasing, we deduce that $t\geq0$ so $t=0$ and $x=0$, which also gives a contradiction. 
The case $x<0$ is similar so we skip it. This ends the proof.
\end{proof}

\begin{proof}[Proof of Theorem \ref{functionphi}]
Combine Propositions \ref{ineqbis}, \ref{propphisuperlin} and \ref{propphibounded}.
\end{proof}

\appendix

\section{Reformulation of state constraints}
Let us prove the reformulation of state constraint result in the case where the Hamiltonian is not necessarily convex.  

\begin{theo}[Reformulation of state constraints] 
\label{reforSC}
Assume $H:\mathbb{R}\rightarrow\mathbb{R}$ is continuous and coercive \eqref{coerc} and $u:(0,T)\times [0,+\infty) \rightarrow\mathbb{R}$
satisfies \eqref{ctefaible}
then $u$ is a viscosity solution of 
\begin{equation}
\label{sens1}
\left\{ \begin{array}{lll}
u_{t}+H(u_{x})= 0 & \mbox { in } & (0,T)\times  (0,+\infty)\\
u_{t}+H(u_{x})\geq 0 & \mbox { in } & (0,T)\times \{0 \},
\end{array}\right.
\end{equation} 
if and only if $u$ is a viscosity solution of the flux-limited problem
\begin{equation}
\label{sens2}
\left\{ \begin{array}{lll}
u_{t}+H(u_{x}) = 0 & \mbox { in } & (0,T)\times (0,+\infty)\\
u_{t}+H^{-}(u_{x})= 0 & \mbox { on } & (0,T)\times \{0 \},
\end{array}\right.
\end{equation} 
where $H^{-}$ is the decreasing part of the Hamiltonian defined by
$$H^{-}(p)=\inf\limits_{q\leq p} H(q).$$ 
\end{theo}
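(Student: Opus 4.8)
The plan is to reduce everything to the test-function-reduction machinery of Section~3, applied to the decreasing envelope $H^{-}$. The first step is to realize $H^{-}$ as an $A$-limited flux function. To this end I would introduce the set limiter
$$A_{0}:=\Bigl\{\,p\in\mathbb{R}\ :\ \inf_{q\le p}H(q)=H(p)\ \ \text{and}\ \ p<p^{+}\,\Bigr\},$$
i.e.\ the left endpoints of the maximal ``steps'' of $H^{-}$. By definition every $p\in A_{0}$ satisfies $H(q)\ge H(p)$ for all $q<p$, hence $p^{-}=p<p^{+}$. I would first check that $A_{0}$ is a set limiter in the sense of Definition~\ref{defA} and that $F_{A_{0}}=H^{-}$ (equivalently, that $A_{0}$ coincides with the set limiter $A_{H^{-}}$ of Definition~\ref{defFAF}, which is a set limiter by Proposition~\ref{AFfluxlim}). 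Since \eqref{sens2} is exactly the flux-limited problem of Section~2 with $F=H^{-}=F_{A_{0}}$, the theorem then amounts to the statement that $u$ is a viscosity solution of the state-constraint problem \eqref{sens1} if and only if $u$ is an $A_{0}$-flux-limited solution.

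One inclusion is immediate: if $u$ solves \eqref{sens2}, the interior equations are the same as in \eqref{sens1}; at $x=0$ the super-solution inequality $\varphi_{t}+H^{-}(\varphi_{x})\ge 0$ gives $\varphi_{t}+H(\varphi_{x})\ge \varphi_{t}+H^{-}(\varphi_{x})\ge 0$ because $H^{-}\le H$; and \eqref{sens1} imposes no boundary condition on sub-solutions. For the converse I would use Theorem~\ref{threduc}. For the sub-solution part: every $p_{\alpha}\in A_{0}$ has $p_{\alpha}^{-}=p_{\alpha}$, so the family of modified test functions one must verify in Theorem~\ref{threduc}(i) (those with $p_{\alpha}^{-}\ne p_{\alpha}$) is empty; since $u^{*}$ is an interior sub-solution satisfying the weak-continuity hypothesis \eqref{ctefaible} $=$ \eqref{continuitefaible}, Theorem~\ref{threduc}(i) directly yields that $u$ is an $A_{0}$-flux-limited sub-solution, i.e.\ a sub-solution of \eqref{sens2}. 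For the super-solution part: apply Theorem~\ref{threduc}(ii); the test functions $\varphi(t,x)=\psi(t)+\phi_{\alpha}(x)$ to be checked have slope $p_{\alpha}\in A_{0}$, with $p_{\alpha}\ne p_{\alpha}^{+}$ (as $p_{\alpha}^{-}=p_{\alpha}<p_{\alpha}^{+}$), and since $F_{A_{0}}(p_{\alpha})=H(p_{\alpha})$ by Definition~\ref{funcFA}, the required inequality $\varphi_{t}+F_{A_{0}}(\varphi_{x})\ge 0$ at $(t_{0},0)$ is precisely the boundary super-solution inequality $\varphi_{t}+H(\varphi_{x})\ge 0$ of \eqref{sens1}; hence $u$ is an $A_{0}$-flux-limited super-solution, i.e.\ a super-solution of \eqref{sens2}. (The super-solution direction could alternatively be run directly through the critical-slope lemma, Lemma~\ref{Tpentescri2}, but going through Theorem~\ref{threduc} keeps the argument parallel to \cite{im,gue1}.)

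The main obstacle is the first, purely real-analytic step: that $A_{0}$ is a set limiter and $F_{A_{0}}=H^{-}$. That $H^{-}$ is continuous, non-increasing, and---using coercivity \eqref{coerc} of $H$---semi-coercive is routine, as is the non-emptiness of $A_{0}$ (the step of $H^{-}$ containing a global minimizer of $H$ contributes a point). Conditions 1 and 2 of Definition~\ref{defA} are one-liners: $p_{\alpha}^{-}=p_{\alpha}<p_{\alpha}^{+}$ gives $p_{\alpha}^{-}\ne p_{\alpha}^{+}$, and $p_{\alpha_{1}}<p_{\alpha_{2}}$ forces $H(p_{\alpha_{1}})\ge H(p_{\alpha_{2}})$ by the defining property of $A_{0}$. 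Condition 3 of Definition~\ref{defA} together with the identity $F_{A_{0}}=H^{-}$ reduce, via Lemma~\ref{H-+}, to the elementary claim that $\{H^{-}<H\}$ is the disjoint union of the open intervals $]p_{\alpha}^{-},p_{\alpha}^{+}[=\,]p_{\alpha},p_{\alpha}^{+}[$, $\alpha\in A_{0}$, on each of which $H^{-}\equiv H(p_{\alpha})$; coercivity is what guarantees $p_{\alpha}^{+}<+\infty$ except on the step through the global minimizer. This bookkeeping is where the nonconvexity of $H$ genuinely enters (there may be several disjoint steps, and $H$ may revisit the value $H(p_{\alpha})$ inside a step, forcing $A_{0}$ to contain more than one point per step), but it follows the pattern of the computation of the set limiter of $H^{-}$ in the quasi-convex setting of \cite{im,gue1}; once $A_{0}$ is known to be a set limiter, Proposition~\ref{AFfluxlim} and the rigidity of set limiters used in the proof of Lemma~\ref{AFA=A} give a cross-check that $A_{0}=A_{H^{-}}$.
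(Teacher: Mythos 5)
Your proposal is correct and follows essentially the same route as the paper: identify a set limiter $A_{0}$ consisting of points with $p^{-}=p<p^{+}$ so that $F_{A_{0}}=H^{-}$, apply Theorem \ref{threduc} (with an empty family of modified test functions for sub-solutions, and with $F_{A_{0}}(p_{\alpha})=H(p_{\alpha})$ for super-solutions), and get the converse from $H^{-}\leq H$. Your definition of $A_{0}$ via $\inf_{q\le p}H(q)=H(p)$ and $p<p^{+}$ is an equivalent reformulation of the paper's Definition \ref{setA0}, and the bookkeeping you defer is exactly the content of the paper's Lemmas \ref{setA0H-} and \ref{fluxA0H-}.
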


First we prove that $F_{A_{H^{-}}}=H^{-}$ that allows us to use Theorem \ref{threduc} of reduction of the set of test functions.

\begin{defin}[Set limiter $A_{0}$]
\label{setA0}
Let $H:\mathbb{R}\rightarrow\mathbb{R}$ be continuous and coercive \eqref{coerc}. The set limiter $A_{0}$ is the set of points $p\in \mathbb{R}$ such that 
\begin{itemize}
\item $p^{-}=p<p^{+},$
\item $\forall q\in \mathbb{R} \mbox{ such that } ]q^{-},q^{+}[\cap]p,p^{+}[\neq \emptyset, \mbox{ we have } H(q)\geq H(p)$.
\end{itemize}
\end{defin}

\begin{Lm}
\label{setA0H-}
 We have $A_{H^{-}}=A_{0}$. 
\end{Lm}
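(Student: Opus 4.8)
The plan is to unwind both definitions and show each set is contained in the other. Recall that $A_0$ (Definition \ref{setA0}) is built from the Hamiltonian $H$ alone, while $A_{H^-}$ is the set $A_F$ of Definition \ref{defFAF} applied to the particular non-increasing function $F=H^-$, where $H^-(p)=\inf_{q\le p}H(q)$. So the first step is to record the elementary facts about $H^-$: it is continuous, non-increasing, semi-coercive (since $H$ is coercive), and crucially, for every $p$ one has $H^-(p)\le H(p)$, with equality precisely when $p$ is a point where $H$ has not yet started to come back down, i.e. when $H(q)\ge H(p)$ for all $q\le p$. A useful reformulation: $H^-(p)=H(p)$ iff $p^-=p$ (using Lemma \ref{H-+} and Definition \ref{defpm1}), and $H^-(p)<H(p)$ iff $p^->p$. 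From this it follows immediately that the condition $F(p)\ge H(p)$ with $F=H^-$ forces $p^-=p$, which kills branch \eqref{AFsous} of Definition \ref{defFAF} (its clause (i) requires $p^-\ne p$). Hence every element of $A_{H^-}$ must satisfy branch \eqref{AFsur}, and for such $p$ clause (i) gives $p^+\ne p$ while clause (ii) ($H^-(p)\le H(p)$) is automatic.

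Next I would translate the remaining clauses. For $p$ satisfying \eqref{AFsur} with $F=H^-$: clause (i) is exactly the first bullet of Definition \ref{setA0} once we also know $p^-=p$ — but wait, we need $p^-=p$ as well for $p\in A_0$, and indeed we just saw every candidate in $A_{H^-}$ has $p^-=p$ (otherwise it would have to satisfy \eqref{AFsous}, impossible); so $p^-=p<p^+$, matching the first bullet of Definition \ref{setA0}. Then I'd compare clause (iii) of \eqref{AFsur}, namely ``for all $q$ with $H^-(q)\le H(q)$ and $]q^-,q^+[\cap]p,p^+[\ne\emptyset$, we have $H(q)\ge H(p)$'', with the second bullet of Definition \ref{setA0}, ``for all $q$ with $]q^-,q^+[\cap]p,p^+[\ne\emptyset$, we have $H(q)\ge H(p)$''. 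The latter is formally stronger — it quantifies over all $q$, not just those with $H^-(q)\le H(q)$. So the key point is that the extra hypothesis $H^-(q)\le H(q)$ is vacuous: it holds for every $q\in\mathbb{R}$. Indeed $H^-(q)=\inf_{r\le q}H(r)\le H(q)$ always. Therefore clause (iii) of \eqref{AFsur} and the second bullet of Definition \ref{setA0} are literally the same statement, and the two defining conditions coincide. This gives $A_{H^-}=A_0$.

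Concretely I would organize the proof as: (1) show $H^-(q)\le H(q)$ for all $q$ and $H^-(p)=H(p)\iff p^-=p$; (2) deduce that no $p$ can satisfy \eqref{AFsous} with $F=H^-$ (its clause (i) would need $p^-\ne p$, but clause (ii) $H^-(p)\ge H(p)$ forces $p^-=p$), so $p\in A_{H^-}$ iff $p$ satisfies \eqref{AFsur} with $F=H^-$; (3) observe that for such $p$, clauses (i),(ii) reduce to ``$p^-=p<p^+$'' and are equivalent to the first bullet of Definition \ref{setA0} (clause (ii), $H^-(p)\le H(p)$, being automatic); (4) observe clause (iii) of \eqref{AFsur} becomes the second bullet of Definition \ref{setA0} because the side hypothesis $H^-(q)\le H(q)$ holds for every $q$. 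Conclude $A_{H^-}=A_0$.

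The main obstacle — really the only subtle point — is step (2): one must be careful that the hypothesis $F(p)\ge H(p)$ in \eqref{AFsous}(ii), when $F=H^-$, combined with the universal inequality $H^-\le H$, forces $H^-(p)=H(p)$, hence $p^-=p$, hence \eqref{AFsous}(i) fails; so $A_{H^-}$ only ever uses the ``super'' branch \eqref{AFsur}, which is what makes it match the one-sided Definition \ref{setA0}. Everything else is a direct comparison of conditions with no real computation. I would also note in passing (as the excerpt's surrounding text does) that combining this with Lemma \ref{AFA=A} — which states $A_{F_A}=A$ — is not circular here: Lemma \ref{setA0H-} is the input needed to identify $F_{A_{H^-}}$ with $H^-$ via Definition \ref{funcFA}, after which Theorem \ref{threduc} on reduction of test functions applies to prove Theorem \ref{reforSC}.
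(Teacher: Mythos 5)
Your proof follows essentially the same route as the paper's: unwind Definition \ref{defFAF} with $F=H^{-}$, use $H^{-}\leq H$ to make the side conditions in clause (iii) vacuous, and reduce everything to showing that every $p\in A_{H^{-}}$ satisfies $p^{-}=p$. Two points need attention. First, your parenthetical justification of that last fact --- ``otherwise it would have to satisfy \eqref{AFsous}, impossible'' --- does not follow from your step (2): step (2) shows that no point satisfies \eqref{AFsous}, but Definition \ref{defFAF} by itself allows a point with $p^{-}\neq p$ to belong to $A_{H^{-}}$ through branch \eqref{AFsur} alone, and such a point would lie in $A_{H^{-}}\setminus A_{0}$. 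The implication ``$p\in A_{F}$ and $p^{-}\neq p$ implies $p$ satisfies \eqref{AFsous}'' is exactly the ``Moreover'' part of Proposition \ref{AFfluxlim}, which is what the paper invokes at this very step; once you cite it, your contradiction with step (2) does close the argument. Second, your ``useful reformulation'' $H^{-}(p)=H(p)\iff p^{-}=p$ is only true in the direction you actually use: $H^{-}(p)=H(p)$ means $H(q)\geq H(p)$ for all $q\leq p$, which gives $p^{-}=p$; the converse fails (take $H$ touching the level $H(p)$ at points accumulating at $p$ from the left while dipping strictly below it further to the left, so that $p^{-}=p$ yet $H^{-}(p)<H(p)$). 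Since only the correct direction is load-bearing, this does not damage the proof, but the equivalence as stated should be dropped.
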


\begin{proof}[Proof of Lemma \ref{setA0H-}]
Notice first that $H^{-}\leq H$ and that $H^{-}$ is non-increasing.
Using Definition \ref{defFAF}, it only remains to prove that for all $p\in A_{H^{-}}$ we have $p^{-}=p$.
Assume by contradiction that there exists $p\in A_{H^{-}}$ such that $p^{-}<p$. Then using Proposition \ref{AFfluxlim} we deduce that $p$ satisfies (ii) of \eqref{AFsous} so $H(p)=H^{-}(p)$. We deduce from Lemma \ref{H-+} that 
$$\forall q\in ]p^{-},p[ \quad H^{-}(q)\leq H(q)< H(p)=H^{-}(p),$$ but $H^{-}$ is non-increasing which gives a contradiction. So we have $p^{-}=p$. 
We deduce that $A_{H^{-}}=A_{0}$.  
\end{proof}

\begin{Lm}
\label{fluxA0H-}
We have $F_{A_{H^{-}}}=F_{A_{0}}=H^{-}$.
\end{Lm}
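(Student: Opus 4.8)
The plan is to reduce everything to proving $F_{A_0} = H^-$, and then to exploit two elementary facts about $F_{A_0}$: it is non-increasing, and — crucially — it lies below $H$ everywhere. First I would invoke Lemma \ref{setA0H-}, which gives $A_{H^-} = A_0$, so that $F_{A_{H^-}} = F_{A_0}$ by Definition \ref{funcFA}; it then remains to show $F_{A_0} = H^-$. By Proposition \ref{FAdec}, $F_{A_0}$ is continuous and non-increasing. The key observation is that $F_{A_0}(p) \leq H(p)$ for every $p \in \mathbb{R}$: for $\alpha \in A_0$ one has $p_\alpha^- = p_\alpha$ by the first bullet of Definition \ref{setA0}, so the interval $]p_\alpha^-, p_\alpha[$ on which Lemma \ref{positionHFA} would force $F_{A_0} > H$ is empty. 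Consequently, by Lemma \ref{positionHFA} and Definition \ref{funcFA}, one has $F_{A_0} = H$ outside $\bigcup_{\alpha}]p_\alpha, p_\alpha^+[$ and $F_{A_0} < H$ on $\bigcup_{\alpha}]p_\alpha, p_\alpha^+[$, whence $F_{A_0} \leq H$ throughout. This pointwise bound, which fails for a general set limiter, is the only point in the argument that genuinely uses the structure of $A_0$.

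Given these two facts, the inequality $F_{A_0}(p) \leq H^-(p)$ is immediate: since $F_{A_0}$ is non-increasing, for every $q \leq p$ we have $F_{A_0}(p) \leq F_{A_0}(q) \leq H(q)$, and taking the infimum over $q \leq p$ gives $F_{A_0}(p) \leq \inf_{q \leq p} H(q) = H^-(p)$. For the reverse inequality $F_{A_0}(p) \geq H^-(p)$ I would split into two cases. If $p \notin \bigcup_\alpha ]p_\alpha, p_\alpha^+[$, then $F_{A_0}(p) = H(p) \geq \inf_{q\leq p} H(q) = H^-(p)$, taking $q = p$ in the infimum. If $p \in ]p_\alpha, p_\alpha^+[$ for some $\alpha$, then $F_{A_0}(p) = H(p_\alpha)$ with $p_\alpha < p$, so $F_{A_0}(p) = H(p_\alpha) \geq \inf_{q \leq p} H(q) = H^-(p)$. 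Combining the two inequalities yields $F_{A_0} = H^-$, and hence $F_{A_{H^-}} = F_{A_0} = H^-$.

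I do not anticipate a real obstacle here: the proof is short once one notices the bound $F_{A_0} \leq H$. The only care needed is to verify that the defining condition $p_\alpha^- = p_\alpha$ for $\alpha \in A_0$ really does empty the region where $F_{A_0}$ could exceed $H$, and to keep in mind the (trivial) observation that the infimum defining $H^-(p)$ ranges over all $q \leq p$, which includes $p$ itself as well as every $p_\alpha$ lying in a flat interval that contains $p$. Coercivity of $H$ is not even needed beyond what is already used to make $F_{A_0}$ well defined, since the infima above need not be attained for the argument to go through.
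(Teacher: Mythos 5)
Your proposal is correct and follows essentially the same route as the paper: the paper likewise reduces to $F_{A_0}=H^-$ via Lemma \ref{setA0H-}, notes the key bound $F_{A_0}\leq H$ (which you justify slightly more explicitly from $p_\alpha^-=p_\alpha$), and then combines monotonicity of $F_{A_0}$ with the same case split on whether $p$ lies in some $]p_\alpha,p_\alpha^+[$. No gaps.
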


\begin{proof}[Proof of Lemma \ref{fluxA0H-}]
From Lemma \ref{setA0H-}, we deduce that $F_{H^{-}}=F_{A_{0}}$. Let us prove that $F_{A_{0}}=H^{-}$. 
Notice first that 
\begin{equation}
\label{inegfluxA0}
F_{A_{0}}\leq H. 
\end{equation}
Let $p\in\mathbb{R}$. 

If there exists $p_{\alpha}\in A_{0}$ such that $p\in ]p_{\alpha},p_{\alpha}^{+}[$ then we have 
$$H^{-}(p)\leq F_{A_{0}}(p)=H(p_{\alpha}).$$
Moreover, from Lemma \ref{H-+} we have 
$$\forall q\in ]p_{\alpha},p[ \quad H(p_{\alpha})<H(q)$$ and as $F_{A_{0}}$ is non-increasing and by \eqref{inegfluxA0}, we have also 
$$\forall q\leq p_{\alpha} \quad H(p_{\alpha})=F_{A_{0}}(p_{\alpha})\leq F_{A_{0}}(q)\leq H(q).$$
So we have 
$$H^{-}(p)=\inf\limits_{q\leq p} H(q) =H(p_{\alpha})=F_{A_{0}}(p).$$

If $p\notin \bigcup\limits_{p_{\alpha}\in A_{0}} ]p_{\alpha},p_{\alpha}^{+}[$, then $$F_{A_{0}}(p)=H(p)\geq H^{-}(p).$$
Moreover, as $F_{A_{0}}$ is non-increasing and by \eqref{inegfluxA0}, we have 
$$ \forall q\leq p \quad H(p)=F_{A_{0}}(p)\leq F_{A_{0}}(q)\leq H(q).$$
 So $F_{A_{0}}(p)=H(p)=H^{-}(p)$.  We deduce that $F_{A_{0}}=H^{-}$. 
\end{proof}

The proof is exactly the same as in \cite{gue1,im}.  

\begin{proof}[Proof of Theorem \ref{reforSC}]
We do the proof in three steps.
\newline
\textbf{1st step:}
Let us prove that
$$u_{t}+H(u_{x}) \leq 0 \quad \mbox{ in } (0,T)\times (0,+\infty),$$
implies
$$u_{t}+H^{-}(u_{x}) \leq 0\quad \mbox{ on } (0,T)\times\{0\}.$$
Since $\forall p_{\alpha}\in A_{0}$, $p_{\alpha}^{-}=p_{\alpha}$, using Theorem \ref{threduc}, we deduce that $u$ is a $A_{0}$-flux limited sub-solution, so
$$u_{t}+F_{A_{0}}(u_{x}) \leq 0 \quad \mbox{ on } (0,T)\times\{0\}.$$ 
As $F_{A_{0}}(u_{x})=H^{-}(u_{x})$, we have $$u_{t}+H^{-}(u_{x})\leq 0\quad \mbox{ on } (0,T)\times\{0\}.$$
\textbf{2nd step:}
Let us prove that
$$u_{t}+H(u_{x}) \geq 0\quad \mbox{ in } (0,T)\times [0,+\infty),$$
implies
$$u_{t}+H^{-}(u_{x}) \geq 0 \quad\mbox{ on } (0,T)\times\{ 0\}.$$
Let $\varphi$ be a test function touching $u_{*}$ from below at $(t_{0},0).$
Using Theorem \ref{threduc}, we assume that
$$\varphi(t,x)=\psi(t)+\phi_{\alpha}(x),$$
where $\psi\in \mathcal{C}^{1}((0,T))$
and $$\phi_{\alpha}\in\mathcal{C}^{1}([0,+\infty)), \quad \phi'_{\alpha}(0)=p_{\alpha}.$$
We have $\varphi_{x}(t_{0},0)=p_{\alpha}$ and
 $$H(\varphi_{x}(t_{0},0))=H(p_{\alpha})=F_{A_{0}}(p_{\alpha})=H^{-}(p_{\alpha})=H^{-}(\varphi_{x}(t_{0},0)),$$
so by hypothesis, we have $\varphi_{t}+H(\varphi_{x}(t_{0},0))\geq 0$.
We deduce that $$\varphi_{t}+H^{-}(\varphi_{x}(t_{0},0))\geq 0.$$
\textbf{3rd step:}
The reverse come from the fact that $H^{-}\leq H.$
\end{proof}

\begin{Rmk}
In \cite{gue1}, the author gives simpler proofs without using Theorem of reduction of the set of test functions which can be adpated for a nonconvex Hamiltonian in dimension 1 for the stationary case. 
\end{Rmk}

%
%The idea is to construct a test function coupling time and space, of the form
%$$\varphi(t,x)=f(t)+g(x)+xE(t),$$
%\begin{flushright}
%where the functions $f, g, E:\mathbb{R}\rightarrow\mathbb{R}$ are of class $\mathcal{C}^{1}$ and $E$ will replace the function $F$ with better properties than $F$. In this section, the function $F$ satisfies the hypothesis of theorem \ref{mainthRes}.
%
%\begin{prop}[Function E]
%Assume $F$ is of class $\mathcal{C}^{1}$, decreasing, such that $F(0)=0$ and satisfies (\ref{propF}). Then there exists a function $E$ of class $\mathcal{C}^{1}$ which satisfies the properties of $-F$ and the derivative $E'$ satisfies the following properties.
%\begin{enumerate}
%\item (Comparison of the derivatives)
%\begin{equation} 
%\label{compar}
%\forall t\in \mathbb{R}, \quad 0<E'(t)\leq -F'(t).
%\end{equation}
%\item (Variations of $E'$) 
%\begin{equation}
%\label{Varia}
%E' \mbox{ is non-decreasing on } \mathbb{R}^{-} \mbox{ and  non-increasing on } \mathbb{R}^{+}.
%\end{equation}
%\item (Limits of $E'$) 
%\begin{equation}
%\label{limitE'}
%\lim_{x\rightarrow \pm \infty}E'(x)=0.
%\end{equation}
%\item ($E'$ is even) 
%\begin{equation}
%\label{even}
%\forall t\in \mathbb{R}, \quad E'(-t)=E'(t).
%\end{equation}
%\end{enumerate}
%\end{prop}
%\end{flushright}

\textbf{Acknowledgements.} The author thanks R. Monneau for giving the idea of coupling time and space in the test function for the doubling variable method. The author thanks also C. Imbert for all his advice and support concerning this work. This work was partially supported by the ANR-12-BS01-0008-01 HJnet project.

\bibliography{bibli}
\bibliographystyle{plain}

\end{document}